\newcommand{\mc}{\mathcal}
\newcommand{\NN}{\mathbb N}
\newcommand{\ZZ}{\mathbb Z}
\newcommand{\RR}{\mathbb R}
\newcommand{\BB}{\mathbb B}
\newcommand{\I}[1][n]{[{#1}]}
\newcommand{\zero}{\mathbf{0}}
\newcommand{\one}{\mathbf{1}}
\newcommand{\ee}{\mathbf{e}}
\newcommand{\matid}{\mathbf{I}}
\newcommand{\jota}{\mathbf{J}}
\newcommand{\jt}[1][t]{\mathscr{J}_{#1}}
\newcommand{\cte}{\mathscr{C}}
\newcommand{\C}[2]{\cte_{#1}^{#2}}
\newcommand{\cnk}{\C{n}{k}}
\newcommand{\cn}{\C{n}{2}}
\newcommand{\qcn}{Q(\cn)}
\newcommand{\circuit}{\mathsf{C}}
\DeclarePairedDelimiter\card{\lvert}{\rvert} 
\newcommand{\Card}[1]{\card*{#1}}   
\DeclareMathOperator{\supp}{supp}
\newcommand{\car}[1][n]{\chi_{#1}}
\newcommand{\symdif}{\bigtriangleup}
\DeclareMathOperator{\STAB}{STAB}
\DeclareMathOperator{\conv}{conv}
\newcommand{\ch}{^\ast} 
\newcommand{\Q}{Q\ch}
\newcommand{\cov}{\mathcal{C}} 
\newcommand{\tra}{\mathcal{T}} 
\newcommand{\blk}{\mathbf{b}} 
\newcommand{\fast}{f^{\ast}}
\newcommand{\ExtPv}{v}
\newcommand{\ExtPu}{u}
\newcommand{\ExtPw}{w}
\newcommand{\SuppPv}{\supp \ExtPv}
\newcommand{\SuppPu}{\supp \ExtPu}
\newcommand{\SuppPw}{\supp \ExtPw}
\newcommand{\SuppVa}{\supp a}
\newcommand{\G}{\mathcal{G}} 
\newcommand{\ngh}{\leftrightarrow} 
\newcommand{\nngh}{\centernot\ngh} 
\newcommand{\arc}[2][n]{\left[#2\right]_{#1}}
\numberwithin{equation}{section}
\theoremstyle{plain}
   \newtheorem{thm}{Theorem}[section]
   \newtheorem{coro}[thm]{Corollary}
   \newtheorem{lem}[thm]{Lemma}
   \newtheorem{propo}[thm]{Proposition}
   \newtheorem{assums}[thm]{Assumptions}
\theoremstyle{definition}
   \newtheorem{defn}[thm]{Definition}
   \newtheorem{exam}[thm]{Example}
   \newtheorem{notat}[thm]{Notation}
   \newtheorem{rem}[thm]{Remark}
\newenvironment{enumcona}{%
   \begin{enumerate}[\quad(a)]}{\end{enumerate}}
\newcommand{\tref}[2]{#1~\ref{#2}}
\newcommand{\thref}[2]{#1}
\newcommand{\refp}[1]{(\ref{#1})} 
\newcommand{\trrefp}[3]{#1~\ref{#2}(\ref{#3})}
\renewcommand{\tref}[2]{
   \hyperref[#2]{#1~\ref*{#2}}}
\renewcommand{\thref}[2]{
   \hyperref[#2]{#1}}
\renewcommand{\refp}[1]{
   \hyperref[#1]{(\ref*{#1})}}
\renewcommand{\trrefp}[3]{
   \hyperref[#3]{#1~\ref*{#2}(\ref*{#3})}}
\providecommand{\arxiv}[2][]{\href{http://www.arxiv.org/abs/#2}{arXiv:#2}}
\title{Vertex adjacencies in the set covering polyhedron}
\author{N\'estor~E.~Aguilera}
\address{Facultad de Ingenier\'ia Qu\'imica (UNL).
   Santiago del Estero 2829,
   3000 Santa Fe, Argentina.}
\email{nestoreaguilera@gmail.com}
\author{Ricardo~D.~Katz}
\address{CONICET-CIFASIS.
   Bv.~27 de febrero 210 bis,
   2000 Rosario, Argentina.}
\email{katz@cifasis-conicet.gov.ar}
\author{Paola~B.~Tolomei}
\address{CONICET and
   Facultad de Ciencias Exactas, Ingenier\'ia y Agrimensura (UNR),
   Pellegrini 250, 2000 Rosario, Argentina.}
\email{ptolomei@fceia.unr.edu.ar}
\begin{document}

\begin{abstract}
We describe the adjacency of vertices of the (unbounded version of the) set covering polyhedron, in a similar way to the description given by Chv\'atal for the stable set polytope. We find a sufficient condition for adjacency, and characterize it with similar conditions in the case where the underlying matrix is row circular. We apply our findings to show a new infinite family of minimally nonideal matrices.
\end{abstract}

\keywords{Polyhedral combinatorics, set covering polyhedron, vertex adjacency}

\catcode`\@=11
\@namedef{subjclassname@2010}{%
  \textup{2010} Mathematics Subject Classification}
\catcode`\@=12
\subjclass[2010]{90C57, 52B05, 05C65}

\maketitle

\section{Introduction}
\label{sec:intro}

The stable set polytope of a graph $G$, $\STAB(G)$, is one of the most studied polyhedra related to set packing problems. In 1975, Chv\'atal~\cite{Ch75} gave a characterization of the adjacency of its vertices: the characteristic vectors of two stable sets $S$ and $S'$ of $G$ are adjacent in $\STAB(G)$ if, and only if, the subgraph of $G$ induced by $S\symdif S' =(S\setminus S')\cup (S'\setminus S)$ is connected. 
 
Since a characterization of vertex adjacency may provide more insight into the associated combinatorial problem, eventually becoming the basis for efficient algorithms, it is quite natural to try to extend Chv\'atal's result to other settings. This was done by several authors, on occasion in the context of the simplex method or related to the Hirsch conjecture, see for instance Hausmann and Korte~\cite{HK78}, Ikebe and Tamura~\cite{IT95}, Alfakih and Murty~\cite{AM98}, Matsui and Tamura~\cite{MT93}, or Michini and Sassano~\cite{MS13}. See also Michini~\cite{Mi12} and references therein.

Sometimes, it may be very difficult to test adjacency: Papadimitriou~\cite{Pa78} observed the difficulty of the adjacency problem for the traveling salesman polytope, later Chung~\cite{Ch80} obtained a similar result for the set covering polytope, whereas Matsui~\cite{Ma95} showed the NP-completeness of the non adjacency problem for the set covering polytope even though the matrix involved has exactly three ones per row. Thus, in contrast to the case of $\STAB(G)$, it is unlikely that a simple characterization of the adjacency of vertices of the set covering polyhedron may be given.

Nevertheless, in this work we go one step beyond the usual sufficient condition of connectivity of a certain graph, and give another condition which is also sufficient for the adjacency of vertices of the (unbounded version of the) set covering polyhedron, showing that more restrictive but similar conditions are also necessary in the case of row circular matrices. Thus, the adjacency problem for the set covering polyhedron is polynomial for these matrices.

This paper is organized as follows. After presenting the notation, some preliminary results and comments in \tref{Section}{sec:background}, in \tref{Section}{sec:graph} we present the graph which we associate with each pair of vertices of the set covering polyhedron defined by a binary matrix $A$. A sufficient condition for adjacency in terms of this graph is presented in \tref{Theorem}{thm:suf} of \tref{Section}{sec:suf}. In \tref{Section}{sec:nec} we establish a characterization of adjacency which applies to the case where $A$ is a row circular matrix (\tref{Theorem}{thm:CharactAdj}), and give an example (\tref{Example}{exam:ppfnd:13}) showing that our sufficient condition is not always necessary even for circulant matrices. Finally, in \tref{Section}{sec:mni} we apply our results to obtain a new infinite family of minimally nonideal matrices based on known minimally nonideal circulant matrices.

\section{Notation and preliminary results}
\label{sec:background}

Let us start by establishing some notation and definitions.

We denote by $\NN$ the set of natural numbers, $\NN = \{1,2,\dotsc\}$;
by $\ZZ$ the set of integers;
by $\RR$ the set of real numbers;
by $\BB$ the set of binary numbers, $\BB = \{0,1\}$;
and by $\I$ the set $\{1,2,\dots,n\}$.

The vectors in the canonical basis of $\RR^n$ will be denoted by $\ee_1,\dots,\ee_n$.
We denote by $\zero_n$ and $\one_n$ the vectors in $\RR^n$ with all zeroes and all ones, respectively, dropping the subindex $n$ if the dimension is clear from the context.

The scalar product in $\RR^n$ is denoted by a dot, so that, e.g., $x\cdot\ee_i = x_i$ for $x\in\RR^n$.
Given $x$ and $y$ in $\RR^n$, we say that $x$ \emph{dominates} $y$, and write $x\ge y$, if $x_i\ge y_i$ for all $i\in\I$.
$\card{X}$ denotes the cardinality of the finite set $X$.

The \emph{support} of $x\in\RR^n$ is the set
$\supp x = \{i\in\I \mid x_i \ne 0\}$.
Conversely,
given $X\subset\I$, its \emph{characteristic vector},
$\car(X)\in\BB^n$,
is defined by
\[
   \ee_i\cdot \car(X) = \begin{cases}
      1 & \text{if $i\in X$}, \\
      0 & \text{otherwise},
   \end{cases}
\]
so that $\supp (\car(X)) = X$.

Given a binary matrix
$A\in \BB^{m\times n}$,
the \emph{set covering polyhedron} associated with $A$, $\Q(A)$, is the convex hull of non-negative integer solutions of $Ax\ge\one$,
\begin{equation}
   \label{defn:Q:ast}
   \Q(A) = \conv \{x\in\ZZ^n\mid Ax\ge\one, x\ge\zero\},
\end{equation}
and we denote by $Q(A)$ the linear relaxation
\begin{equation}
   \label{defn:Q}
   Q(A) = \{x\in\RR^n \mid Ax\ge\one, x\ge\zero\}.
\end{equation}

When $A$ has precisely two ones per row, i.e., when $A$ is the edge-node incidence matrix of a graph $G$, the set covering polyhedron that we consider here is the unbounded version of the node cover polytope of $G$: 
\[
   \overline{\Q(A)} = \conv \{x\in\ZZ^n\mid Ax\ge\one,\one \ge x \ge \zero\} \; .
\]
We observe that in this case the function $x\mapsto \one -x$ affinely maps $\overline{\Q(A)}$ to 
\[
\STAB(G)= \conv \{x\in\ZZ^n\mid Ax\le\one,\one \ge x \ge \zero\} \; . 
\]
As mentioned in the introduction, for the latter polytope Chv\'atal proved the following characterization of vertex adjacency: 

\begin{thm}[\cite{Ch75}]
The characteristic vectors $\car(S)$ and $\car(S')$ of two stable sets of $G$ are adjacent in $\STAB(G)$ if, and only if, the subgraph of $G$ induced by $S\symdif S'$ is connected. 
\end{thm}

As a consequence of this and the fact that it can be proved that for any binary matrix $A$ two vertices of $\Q(A)$ are adjacent if and only if they are adjacent in $\overline{\Q(A)}$, see~\cite{AKT17}, we obtain: 

\begin{coro}\label{coro:chacarct:two:ones}
Suppose that every row of $A\in\BB^{m\times n}$ has exactly two ones, i.e., that $A$ is the edge-node incidence matrix of a graph $G$. Then, two distinct vertices $\ExtPv$ and $\ExtPv'$ of $\Q(A)$ (resp. of  $\overline{\Q(A)}$) are adjacent in $\Q(A)$ (resp. in $\overline{\Q(A)}$) if, and only if, the subgraph of $G$ induced by $\SuppPv\symdif \SuppPv'$ is connected. 
\end{coro}

In this paper we assume
that the binary matrix $A$
associated with the set covering polyhedra in~\eqref{defn:Q:ast} and~\eqref{defn:Q}
verifies the following assumptions:

\begin{assums}
\label{assums:A}
The matrix $A\in\BB^{m\times n}$ satisfies:
\begin{itemize}
\item
it has no dominating rows,

\item
it has between $2$ and $n-1$ ones per row,

\item
it has no column of all ones or of all zeroes.
\end{itemize}
\end{assums}

We denote by $C_t$ the support of the $t$-th row of $A$, and we let $\cov = \{C_1,\dots,C_m\}$.
\tref{Assumptions}{assums:A} imply
that $2\le\card{C}\le n-1$ for every $C\in\cov$,
that $\bigcup_{C\in\cov} C = \I$,
and that $\cov$ is a \emph{clutter} in the nomenclature of \cite{CN94}.

The vertices of $\Q(A)$
are the binary vertices of $Q(A)$.
They form the {\em blocker} of $A$, $\blk(A)$,
and their supports are the {\em minimal transversals} of $\cov$, which we denote by $\tra$.
That is, $T\in\tra$ if and only if $T\cap C\ne\emptyset$ for all $C\in\cov$ and if $R\subset T$ with $R\cap C\ne\emptyset$ for all $C\in\cov$ then $R = T$.
Notice that
\begin{itemize}
\item
$C\cap T \ne\emptyset$ for all $C\in\cov$ and $T\in\tra$.

\item
For every $T\in\tra$ and $p\in T$,
there exists $C\in\cov$ such that $C\cap T = \{p\}$.
\end{itemize}
As $\blk(\blk(A)) = A$ if $A$ is a clutter matrix,
we also have:
\begin{itemize}
\item
For every $C\in\cov$ and $p\in C$
there exists $T\in\tra$ such that $C\cap T = \{p\}$.
\end{itemize}

\begin{rem}
\label{rem:card:T}
Notice that \tref{Assumptions}{assums:A} also imply that $\tra$ has properties similar to those of $\cov$: $2\le\card{T}\le n-1$ for every $T\in\tra$, and $\bigcup_{T\in\tra} T = \I$.
\end{rem}

A \emph{convex combination} of the points $x^1, \ldots ,x^\ell$ of $\RR^n$ is a point of the form $\sum_{k\in \I[\ell]} \lambda_k x^k$,
where $\sum_{k\in \I[\ell]} \lambda_k = 1$ and
$\lambda_k\ge 0$ for $k\in \I[\ell]$.
The combination is \emph{strict} if
all $x^k$ are different and
$0 < \lambda_k < 1$ for all $k\in \I[\ell]$.

The following result is well known and we will use it to prove adjacency:

\begin{propo}\label{propo:adys:1}
Suppose
$P = \{x\in\RR^n \mid Ax\ge b, x\ge\zero\}$,
where
$A\in\RR^{m\times n}$
has non-negative entries
and $b\ge\zero$.
If $\ExtPv$ and $\ExtPv'$ are distinct vertices of $P$, the following are equivalent:
\begin{enumcona}
\item\label{propo:adys:1:a}
$\ExtPv$ and $\ExtPv'$ are adjacent in $P$.

\item\label{propo:adys:1:b}
If a strict convex combination $\sum_{k\in \I[\ell]} \lambda_k x^k$ of points of $P$ belongs to the segment with endpoints $\ExtPv$ and $\ExtPv'$,
then $x^k$ also belongs to this segment for all $k\in \I[\ell]$.

\item\label{propo:adys:1:c}
If $y = \sum_{k\in \I[\ell]} \lambda_k \ExtPu^k$
is a strict convex combination of vertices $\ExtPu^1, \ldots, \ExtPu^\ell$ of $P$ and
$y\le \frac{1}{2}\,(\ExtPv + \ExtPv')$,
then $\ell =2$ and, without loss of generality, $\ExtPu^1=\ExtPv$ and $\ExtPu^2=\ExtPv'$.
\end{enumcona}
\end{propo}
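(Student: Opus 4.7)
The plan is to establish the three conditions equivalent via the cycle $(a) \Leftrightarrow (b) \Rightarrow (c) \Rightarrow (a)$. The equivalence $(a) \Leftrightarrow (b)$ is the standard polyhedral characterization of adjacency: two distinct vertices are adjacent precisely when the segment joining them is a (one-dimensional) face of $P$, and condition $(b)$ is exactly the face-closure property for that segment. The remaining implications rest on the monotonicity of $P$ that follows from $A\ge 0$ and $b\ge\zero$: if $x\in P$ and $x'\ge x$, then $Ax'\ge Ax\ge b$ and $x'\ge\zero$, so $x'\in P$. A useful corollary I would record first is that distinct vertices of $P$ are never comparable: if $\ExtPv\le \ExtPv'$ with $\ExtPv\ne \ExtPv'$, then monotonicity gives $2\ExtPv'-\ExtPv\in P$ and $\ExtPv'=\frac{1}{2}(\ExtPv+(2\ExtPv'-\ExtPv))$ is a strict convex combination, contradicting extremality of $\ExtPv'$.

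For $(b)\Rightarrow(c)$, given a strict convex combination $y=\sum_k \lambda_k \ExtPu^k$ of vertices with $y\le \frac{1}{2}(\ExtPv+\ExtPv')$, set $\ExtPw := \ExtPv+\ExtPv'-y$. Then $\ExtPw\ge \frac{1}{2}(\ExtPv+\ExtPv')\ge\zero$, so monotonicity puts $\ExtPw\in P$. Writing $\frac{1}{2}(\ExtPv+\ExtPv')=\frac{1}{2}(y+\ExtPw)$ displays (unless $y=\ExtPw=\frac{1}{2}(\ExtPv+\ExtPv')$, in which case $y$ is already the midpoint and lies on the segment) a strict convex combination of points of $P$ landing on $[\ExtPv,\ExtPv']$, so $(b)$ places $y$ on the segment. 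A second application of $(b)$ to $y=\sum_k \lambda_k \ExtPu^k$ places each vertex $\ExtPu^k$ on $[\ExtPv,\ExtPv']$. Since $\ExtPv,\ExtPv'$ are the only vertices of $P$ on this segment, $\ExtPu^k\in\{\ExtPv,\ExtPv'\}$, and distinctness with $\ell\ge 2$ (forced by strictness) yields $\ell=2$ and $\{\ExtPu^1,\ExtPu^2\}=\{\ExtPv,\ExtPv'\}$.

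For $(c)\Rightarrow(a)$ I would argue the contrapositive: if $\ExtPv,\ExtPv'$ are not adjacent, then the smallest face $F$ of $P$ containing $[\ExtPv,\ExtPv']$ has dimension at least two, and a small perturbation of the midpoint within $F$ in a direction not parallel to $\ExtPv'-\ExtPv$ yields $x^1,x^2\in P\setminus[\ExtPv,\ExtPv']$ with $\frac{1}{2}(\ExtPv+\ExtPv')=\frac{1}{2}(x^1+x^2)$. Averaging this with the trivial splitting gives the strict convex combination $\frac{1}{2}(\ExtPv+\ExtPv')=\frac{1}{4}\ExtPv+\frac{1}{4}\ExtPv'+\frac{1}{4}x^1+\frac{1}{4}x^2$. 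Decomposing each $x^i$ as a convex combination of vertices plus a non-negative combination of extreme rays of $P$ and collecting, we obtain $\frac{1}{2}(\ExtPv+\ExtPv')=\sum_{\ExtPu} c_{\ExtPu}\, \ExtPu + R$ with $c_{\ExtPv},c_{\ExtPv'}\ge\frac{1}{4}$ and $R\ge\zero$. Setting $y:=\frac{1}{2}(\ExtPv+\ExtPv')-R$ yields a strict convex combination of vertices with $y\le\frac{1}{2}(\ExtPv+\ExtPv')$, so $(c)$ forces only $\ExtPv,\ExtPv'$ to appear. Back-substitution then shows $x^i=\mu^i_{\ExtPv}\ExtPv+\mu^i_{\ExtPv'}\ExtPv'+r^i$ with $r^i\ge\zero$, and the identity $R=(\tfrac{1}{2}-c_{\ExtPv})(\ExtPv-\ExtPv')$ combined with the incomparability of $\ExtPv,\ExtPv'$ forces $R=\zero$, whence $r^i=\zero$ and $x^i\in[\ExtPv,\ExtPv']$, contradicting the choice of $x^1,x^2$.

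The main obstacle I foresee is $(c)\Rightarrow(a)$: the averaging-with-trivial-splitting trick is essential because a ``degenerate'' representation of the midpoint involving only a single other vertex $\ExtPu^\ast\notin\{\ExtPv,\ExtPv'\}$ with $\ExtPu^\ast\le\frac{1}{2}(\ExtPv+\ExtPv')$ need not be a strict convex combination and so would not directly violate $(c)$. Mixing with $\frac{1}{2}\ExtPv+\frac{1}{2}\ExtPv'$ brings $\ExtPv$ and $\ExtPv'$ into the representation with positive coefficients, guaranteeing strictness so that $(c)$ applies; incomparability of distinct vertices then disposes of the extreme-ray contribution at the end.
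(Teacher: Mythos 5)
Your proof is correct. Note that the paper itself offers no proof of this proposition --- it is stated as ``well known'' --- so there is nothing to compare against beyond the remark that follows it, which points out exactly the monotonicity property ($x\in P$ and $z\ge x$ imply $z\in P$) that your argument exploits; your construction $\ExtPw=\ExtPv+\ExtPv'-y$ in (b)$\Rightarrow$(c) is precisely the trick that remark alludes to for relaxing the midpoint equality to an inequality. The two implications you work out in detail check out: in (b)$\Rightarrow$(c), the case split on $y=\ExtPw$ versus $y\ne\ExtPw$ is handled correctly, and strictness does force $\ell\ge 2$; in (c)$\Rightarrow$(a), the averaging with the trivial splitting $\frac14\ExtPv+\frac14\ExtPv'+\frac14 x^1+\frac14 x^2$ correctly guarantees $c_{\ExtPv},c_{\ExtPv'}\ge\frac14$, hence a genuinely strict combination after discarding zero coefficients, and the incomparability lemma kills the recession term $R=(\frac12-c_{\ExtPv})(\ExtPv-\ExtPv')$. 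The only ingredients you take on faith are standard and unobjectionable here: the face-theoretic characterization (a)$\Leftrightarrow$(b), the Minkowski--Weyl decomposition of the pointed polyhedron $P$ (pointedness comes from $x\ge\zero$), and the fact that $A\ge 0$ makes the recession cone equal to the non-negative orthant, so that $R\ge\zero$. A fully self-contained writeup would spell these out, but as a proof sketch the argument is complete and sound.
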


We point out that it is possible to relax the usual condition $y=\frac{1}{2}\,(\ExtPv + \ExtPv')$ to the inequality $y\le \frac{1}{2}\,(\ExtPv + \ExtPv')$ in \trrefp{Proposition}{propo:adys:1}{propo:adys:1:c} due to the fact that we assume that $A$ has non-negative entries, and so the polyhedron $P$ satisfies the following property: $x\in P$ and $z\geq x$ imply $z\in P$. 

To prove that the vertices $v$ and $v'$ of $\Q(A)$ are not adjacent, it will be convenient to make use of a variant of \tref{Proposition}{propo:adys:1}.
Namely, suppose $v$ and $v'$ can be decomposed as nontrivial sums of binary vectors:
\[
v = z + c + d
\quad \text{and} \quad
v' = z + c' + d',
\]
so that $z$ is a ``common part'',
and the remaining parts are split into two: $c$ and $d$ for $v$ and $c'$ and $d'$ for $v'$.
Suppose also that by interchanging $d$ and $d'$ we obtain two points of $\Q(A)$:
\[
x = z + c + d' = v - d + d'
\quad \text{and} \quad
x' = z + c' + d = v' - d' + d .
\]
Since $\frac{1}{2}\,(x + x') = \frac{1}{2}\,(\ExtPv + \ExtPv')$, if we could assure that either $x$ or $x'$ does not belong to the segment with endpoints $v$ and $v'$, then by   the equivalence of~\refp{propo:adys:1:a} and~\refp{propo:adys:1:b} of \tref{Proposition}{propo:adys:1} when $P=\Q(A)$, we would conclude that $v$ and $v'$ are not adjacent in $\Q(A)$.
This is the idea behind the next lemma.

\begin{propo}\label{propo:adys:2}
Let $\ExtPv$ and $\ExtPv'$ be distinct vertices of $\Q(A)$. Suppose there exist $d$ and $d'$ in $\RR^n$ such that:
\begin{itemize}
\item
$\zero\leq d \leq \ExtPv$ and $\zero\lneqq d'$,

\item
$\ExtPv\cdot d' = 0$,

\item
$x= \ExtPv - d + d'$ and $x'= \ExtPv' - d' + d$ are elements of $\Q(A)$,

\item
$x$ is binary and different from $\ExtPv'$.

\end{itemize}

Then $\ExtPv$ and $\ExtPv'$ are not adjacent in $\Q(A)$.
\end{propo}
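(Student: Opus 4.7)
The plan is to exploit the midpoint identity hinted at in the paragraph preceding the proposition. A direct computation gives
\[
\tfrac{1}{2}(x + x') = \tfrac{1}{2}\bigl((\ExtPv - d + d') + (\ExtPv' - d' + d)\bigr) = \tfrac{1}{2}(\ExtPv + \ExtPv'),
\]
so the midpoint of $x$ and $x'$ lies on the segment $[\ExtPv,\ExtPv']$. The strategy is then to apply \trrefp{Proposition}{propo:adys:1}{propo:adys:1:b}: if $\ExtPv$ and $\ExtPv'$ were adjacent, the fact that $\tfrac{1}{2}x + \tfrac{1}{2}x'$ is a strict convex combination of points of $\Q(A)$ lying on $[\ExtPv,\ExtPv']$ would force both $x$ and $x'$ to lie on this segment. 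We will rule this out by examining $x$.

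First I would verify the three things required to invoke~\refp{propo:adys:1:b}. That $x, x'\in\Q(A)$ is given. To see that $\tfrac{1}{2}x + \tfrac{1}{2}x'$ is a \emph{strict} convex combination we need $x\neq x'$; if $x=x'$ then $x=\tfrac{1}{2}(\ExtPv+\ExtPv')$, and since $\ExtPv,\ExtPv',x$ are binary this forces $\ExtPv_i=\ExtPv'_i$ at every coordinate, contradicting $\ExtPv\neq\ExtPv'$.

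Next, I would show that $x$ does not lie on $[\ExtPv,\ExtPv']$, which together with~\refp{propo:adys:1:b} yields non-adjacency. Since $x$ is binary and distinct from $\ExtPv'$ by hypothesis, it suffices to rule out $x=\ExtPv$ and $x$ lying strictly between $\ExtPv$ and $\ExtPv'$. For the former, $x=\ExtPv$ is equivalent to $d=d'$; but on $\supp d'$ we have $\ExtPv=0$ (from $\ExtPv\cdot d'=0$ and $d'\ge\zero$), and hence $d=0$ on $\supp d'$ (from $\zero\le d\le\ExtPv$), so $d=d'$ would force $d'=\zero$, contradicting $\zero\lneqq d'$. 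For the latter, if $x=\lambda\ExtPv+(1-\lambda)\ExtPv'$ with $0<\lambda<1$, then at any index $i$ where $\ExtPv_i\neq\ExtPv'_i$ the coordinate $x_i$ would take a strictly non-binary value, contradicting that $x$ is binary.

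Combining these observations, $\tfrac{1}{2}x+\tfrac{1}{2}x'$ is a strict convex combination of points of $\Q(A)$ that belongs to $[\ExtPv,\ExtPv']$ while $x$ itself does not, so \trrefp{Proposition}{propo:adys:1}{propo:adys:1:b} rules out adjacency. There is no real obstacle here; the only delicate step is making sure the convex combination $\tfrac{1}{2}x+\tfrac{1}{2}x'$ is truly strict (i.e.\ that $x\neq x'$), which is where the binarity of the vertices is essential.
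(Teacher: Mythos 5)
Your proof is correct and follows essentially the same route as the paper: establish the midpoint identity $\tfrac{1}{2}(x+x')=\tfrac{1}{2}(\ExtPv+\ExtPv')$, show $x$ is a binary point distinct from both $\ExtPv$ and $\ExtPv'$ and hence off the segment, and invoke \trrefp{Proposition}{propo:adys:1}{propo:adys:1:b}. The only cosmetic differences are that you rule out $x=\ExtPv$ by a support argument where the paper uses the dot-product computation $d'\cdot d'>0$, and you make explicit the strictness check $x\neq x'$, which the paper leaves implicit.
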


\begin{proof}
We notice first that
\( 0\le (\ExtPv - d)\cdot d' \le \ExtPv\cdot d' = 0, \)
which implies \((\ExtPv - d)\cdot d' = 0\).

If $x$ was equal to $\ExtPv$, we would have
\[
   \begin{aligned}
   0
      &= \ExtPv\cdot d'
         && \text{by hypothesis,} \\
      &= x\cdot d'
         && \text{since we are assuming $x = \ExtPv$,} \\
      &= (\ExtPv - d + d')\cdot d'
         \quad
         && \text{by definition of $x$,} \\
      &= d'\cdot d'
         && \text{since $(\ExtPv - d)\cdot d' = 0$,} \\
      &> 0
         && \text{since $d'\ne\zero$,}
   \end{aligned}
\]
i.e., we obtain a contradiction. Thus, $x$ is different from $\ExtPv$.

Given that a segment with endpoints in $\BB^n$ cannot contain other binary points, and that $x$ is binary and different from  $\ExtPv$ and $\ExtPv'$, it follows that $x$ cannot belong to the segment with endpoints $\ExtPv$ and $\ExtPv'$. The result now follows from the equivalence of~\refp{propo:adys:1:a} and~\refp{propo:adys:1:b} of \tref{Proposition}{propo:adys:1} when $P=\Q(A)$, since $\frac{1}{2}\,(x + x') = \frac{1}{2}\,(\ExtPv + \ExtPv')$. 
\end{proof}

\section{The joint saturation graph}
\label{sec:graph}

The following definitions are essential in this paper.

\begin{defn}\label{defn:G}
Given a matrix $A\in\BB^{m\times n}$ and the associated clutter $\cov$ as described in the previous section,
let $\ExtPv$ and $\ExtPv'$ be distinct vertices of $\Q(A)$. We construct a simple undirected graph $\G_A(\ExtPv,\ExtPv')$ depending on $\ExtPv$, $\ExtPv'$ and $A$, called the {\em joint saturation graph of $\ExtPv$ and $\ExtPv'$ (with respect to $A$)}, by the following setup:
\begin{itemize}
\item
the set of nodes of $\G_A(\ExtPv,\ExtPv')$ is
\[
\SuppPv\symdif \SuppPv' = (\SuppPv \setminus \SuppPv') \cup (\SuppPv'\setminus \SuppPv) \; ,
\]

\item
$\G_A(\ExtPv,\ExtPv')$ is bipartite with partite sets
\[
\SuppPv \setminus \SuppPv' \quad\text{and}\quad \SuppPv'\setminus \SuppPv\; ,
\]
\item
$p\in \SuppPv \setminus \SuppPv'$ and $p'\in \SuppPv'\setminus \SuppPv$ are neighbors in $\G_A(\ExtPv,\ExtPv')$ if there exists $C_t\in \cov$ such that
\begin{equation}
\label{equ:edge}
   C_t\cap \SuppPv = \{p\} \quad\text{and}\quad C_t\cap \SuppPv' = \{p'\}.
\end{equation}
\end{itemize}
\end{defn}
 
See Figure~\ref{figure2} below for an illustration of~\tref{Definition}{defn:G}.  

Following West~\cite{We01},
we will denote by $p\ngh p'$ and $p\nngh p'$ whether $p$ and $p'$ are neighbors in $\G_A(\ExtPv,\ExtPv')$ or not, respectively. A path of $\G_A(\ExtPv,\ExtPv')$ will be called {\em even} (resp. {\em odd}) if it contains an even (resp. odd) number of edges.

The name joint saturation graph comes from the fact that each edge of $\G_A(\ExtPv,\ExtPv')$ corresponds to an inequality in $A x \geq \one$ which is saturated, i.e., satisfied with equality, by both vertices $\ExtPv$ and $\ExtPv'$. Observe that there may exist inequalities in $A x \geq \one$ which are saturated by both vertices $\ExtPv$ and $\ExtPv'$ and do not correspond to edges of $\G_A(\ExtPv,\ExtPv')$, because the saturation can be due to a coordinate in $\SuppPv \cap \SuppPv'$. 

In what follows, when the matrix $A$ is clear from the context, we will simply write $\G(\ExtPv,\ExtPv')$. 

\begin{rem}
\label{rem:contraction} 
Let us recall that given a nontrivial proper subset $I$ of $\I$, the \emph{contraction minor} $A/I$ is obtained by eliminating the columns of $A$ with indices in $I$, and then removing any dominating row that might appear.

Note that $\ExtPv$ and $\ExtPv'$ are adjacent in $\Q(A)$ if, and only if, they are adjacent in the face $\{x\in \Q(A)\mid  x_i =0 \text{ for } i\in I\}$, where $I=\I \setminus (\SuppPv \cup \SuppPv')$. Observe that the projection of this face on the coordinates in $\SuppPv \cup \SuppPv'$ is given by $\Q(A/I)$, and that $\G_A(\ExtPv,\ExtPv')$ coincides with $\G_{A/I}(\ExtPv_{\I \setminus I},\ExtPv'_{\I \setminus I})$, where $\ExtPv_{\I \setminus I}$ and $\ExtPv'_{\I \setminus I}$ denote the projections of $\ExtPv$ and $\ExtPv'$ on the coordinates in $\SuppPv \cup \SuppPv'$, respectively. 

Now, let $\bar{A}$ be the submatrix of $A/I$ consisting of the rows with precisely two ones, and let $G$ be the graph whose edge-node incidence matrix is $\bar{A}$. It can be proved that $\G_A(\ExtPv,\ExtPv')$ coincides with the subgraph of $G$ induced by the nodes in $\SuppPv\symdif \SuppPv'$.  
\end{rem}

\begin{rem}\label{rem:graph:1}
If $\ExtPv$ and $\ExtPv'$ are distinct vertices of $\Q(A)$, then:
\begin{itemize}
\item
$\SuppPv \setminus \SuppPv'\ne\emptyset$ since we cannot have $\ExtPv\le \ExtPv'$.
Similarly, we have $\SuppPv' \setminus \SuppPv\ne\emptyset$.

\item
Consequently, $\G(v,v')$ has at least two nodes.

\item
If $\G(v,v')$ has exactly two nodes, then it is connected: if $p\in\SuppPv \setminus \SuppPv'$ and $p'\in\SuppPv' \setminus \SuppPv$ are the two nodes, $R = \supp v\cap \supp v'$, and $C_t\in\cov$ is such that $C_t\cap\SuppPv = \{p\}$, then necessarily $C_t\cap R = \emptyset$ and $C_t\cap \SuppPv' = \{p'\}$.
\end{itemize}
\end{rem}

Sufficient and necessary conditions for the adjacency of vertices of $\Q(A)$ will be given in terms of properties of the joint saturation graph. With this aim, it is convenient to make the following definition:

\begin{defn}\label{defn:partly}
A bipartite graph is said to be \emph{partite-connected} if one of its partite sets is contained in a component, and it is said to be \emph{almost-connected} if it has exactly two components, one of which is an isolated node.
\end{defn}

Observe that according to our definition, a connected bipartite graph is partite-connected but not almost-connected, and an almost-connected graph is always partite-connected.

\section{A sufficient condition for adjacency}
\label{sec:suf}

In this section we present a sufficient condition for the adjacency of two vertices of $\Q(A)$ in terms of their joint saturation graph. For this, we will need the following lemma.

\begin{lem}
\label{lem:graph:1}
Let $\ExtPv$ and $\ExtPv'$ be distinct vertices of $\Q(A)$, and let $\G(\ExtPv,\ExtPv')$ be their joint saturation graph. Suppose $y = \sum_{k\in \I[\ell]} \lambda_k \ExtPu^k$
is a strict convex combination of vertices
$\ExtPu^1,\ldots ,\ExtPu^\ell$ of $\Q(A)$
such that $y\le \frac{1}{2}\,(\ExtPv + \ExtPv')$.
Then, for each $k\in \I[\ell]$, we have:
\begin{enumcona}
\item\label{lem:graph:1:a}
$\SuppPu^k \subseteq \SuppPv \cup \SuppPv'$.

\item\label{lem:graph:1:b}
$\Card{\{p,p'\}\cap \supp \ExtPu^k} = 1$
whenever $p$ and $p'$ are neighbors in $\G(\ExtPv,\ExtPv')$.
\end{enumcona}
\end{lem}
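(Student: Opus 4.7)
The plan is to establish~\ref{lem:graph:1:a} first by a coordinatewise argument, then leverage it in~\ref{lem:graph:1:b} by exploiting a row of $A$ that is saturated by both $\ExtPv$ and $\ExtPv'$. For~\ref{lem:graph:1:a}, fix any $i\in\I\setminus(\SuppPv\cup\SuppPv')$; then $\tfrac{1}{2}(\ExtPv+\ExtPv')_i=0$, so the hypothesis $y\le\tfrac{1}{2}(\ExtPv+\ExtPv')$ forces $y_i\le 0$. On the other hand $y_i=\sum_{k\in\I[\ell]}\lambda_k\,\ExtPu^k_i$ is a non-negative sum with strictly positive weights, since each vertex $\ExtPu^k$ of $\Q(A)$ satisfies $\ExtPu^k\ge\zero$. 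Hence $\ExtPu^k_i=0$ for every $k$, which is exactly $\supp \ExtPu^k\subseteq \SuppPv\cup\SuppPv'$.

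For~\ref{lem:graph:1:b}, let $p\in\SuppPv\setminus\SuppPv'$ and $p'\in\SuppPv'\setminus\SuppPv$ be neighbors in $\G(\ExtPv,\ExtPv')$, and choose $C_t\in\cov$ with $C_t\cap\SuppPv=\{p\}$ and $C_t\cap\SuppPv'=\{p'\}$, as provided by~\tref{Definition}{defn:G}. Since $\ExtPv$ and $\ExtPv'$ are binary vertices of $\Q(A)$, these intersection conditions give $\car(C_t)\cdot \ExtPv=\ExtPv_p=1$ and $\car(C_t)\cdot \ExtPv'=\ExtPv'_{p'}=1$, hence $\car(C_t)\cdot\tfrac{1}{2}(\ExtPv+\ExtPv')=1$. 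Each $\ExtPu^k$ satisfies $A\ExtPu^k\ge\one$, so $\car(C_t)\cdot \ExtPu^k\ge 1$, and taking the $\lambda_k$-weighted average yields $\car(C_t)\cdot y\ge 1$. Since $\car(C_t)\ge\zero$, the hypothesis also gives $\car(C_t)\cdot y\le 1$, so both inequalities are equalities and $\car(C_t)\cdot \ExtPu^k=1$ for every $k$.

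To finish, apply~\ref{lem:graph:1:a}: $\supp \ExtPu^k\subseteq \SuppPv\cup\SuppPv'$, a set which meets $C_t$ exactly in $\{p,p'\}$. Therefore $\car(C_t)\cdot \ExtPu^k=\ExtPu^k_p+\ExtPu^k_{p'}=1$, and since $\ExtPu^k$ is binary exactly one of $\ExtPu^k_p$ and $\ExtPu^k_{p'}$ equals $1$, giving $\Card{\{p,p'\}\cap\supp \ExtPu^k}=1$. I do not foresee any technical obstacle: the whole argument pivots on the defining feature of an edge in $\G(\ExtPv,\ExtPv')$ --- the associated cover inequality is simultaneously tight at $\ExtPv$ and $\ExtPv'$, which is precisely the ``joint saturation'' alluded to by the name of the graph --- and this, combined with $A\ExtPu^k\ge\one$, squeezes every vertex entering the strict convex combination.
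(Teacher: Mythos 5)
Your proof is correct and follows essentially the same route as the paper's: part (a) via non-negativity of coordinates together with strictness of the convex combination, and part (b) by showing the cover inequality for $C_t$ is tight at $y$ (squeezed between $a\cdot y\ge 1$ from feasibility and $a\cdot y\le 1$ from $y\le\frac{1}{2}(\ExtPv+\ExtPv')$) and hence at every $\ExtPu^k$, then intersecting $C_t$ with $\SuppPv\cup\SuppPv'$ using part (a). The only cosmetic difference is that you argue (a) contrapositively while the paper argues it directly.
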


\begin{proof}
Let us write $z = \frac{1}{2}\,(\ExtPv + \ExtPv')$.

If $q\in \supp \ExtPu^k$ we must have $\ExtPu^k\cdot \ee_q > 0$, and therefore
$y\cdot\ee_q > 0$ since the convex combination for $y$ is strict.
Hence $z\cdot\ee_q > 0$ as $z\ge y$.
Therefore, $q\in \SuppPv \cup \SuppPv'$.

For the second part, let $a$ be a row of $A$ such that~\eqref{equ:edge} holds with $C_t = \supp a$.
Since $\SuppPu^k \subseteq \SuppPv \cup \SuppPv'$, by~\eqref{equ:edge} it follows that $p$ and $p'$ are the only elements of $C_t$ which can belong to $\SuppPu^k$.
Then, from $\{p, p'\}\subset C_t$ we conclude that
\[
\{p, p'\} \cap \SuppPu^k =C_t\cap \SuppPu^k .
\]
Moreover, by~\eqref{equ:edge} we have $a\cdot z = 1$, and since $z\ge y$, it follows that $a\cdot y \le 1$.
On the other hand, as $y\in\Q(A)$, we have $a\cdot y\ge 1$, and therefore $a\cdot y = 1$.
Similarly, since $y$ is a strict convex combination of the points
$\ExtPu^1,\ldots ,\ExtPu^\ell$
and $a\cdot \ExtPu^h\ge 1$ for all $h\in \I[\ell]$, we conclude that $
a\cdot \ExtPu^h = 1$ for all $h\in \I[\ell]$. Thus, in particular we have $\Card{C_t\cap \SuppPu^k} = 1$, proving the lemma.\qedhere
\end{proof}

\begin{lem}
\label{lem:CondImplicaAdy}
Let $\ExtPv$ and $\ExtPv'$ be distinct vertices of $\Q(A)$.
Suppose $y = \sum_{k\in \I[\ell]} \lambda_k \ExtPu^k$
is a strict convex combination of vertices
$\ExtPu^1,\ldots ,\ExtPu^\ell$ of $\Q(A)$
such that $y\le \frac{1}{2}\,(\ExtPv + \ExtPv')$. 
If the joint saturation graph of $\ExtPv$ and $\ExtPv'$ is partite-connected, then $\ell =2$ and, without loss of generality, $\ExtPu^1=\ExtPv$ and $\ExtPu^2=\ExtPv'$.
\end{lem}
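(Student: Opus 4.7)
My plan is to use partite-connectedness to classify every $\ExtPu^k$ in the combination as being equal either to $\ExtPv$ or to $\ExtPv'$. By the symmetry of the hypothesis in $\ExtPv$ and $\ExtPv'$, I will assume without loss of generality that the partite set $\SuppPv\setminus\SuppPv'$ is contained in a single component $H$ of $\G(\ExtPv,\ExtPv')$; the remaining components of $\G(\ExtPv,\ExtPv')$ are then isolated vertices of $\SuppPv'\setminus\SuppPv$. First I will apply~\trrefp{Lemma}{lem:graph:1}{lem:graph:1:b} along the edges of $H$: since $H$ is bipartite and connected and exactly one endpoint of every edge belongs to $\SuppPu^k$, a straightforward induction on distance within $H$ shows that, for each $k$, $\SuppPu^k\cap H$ coincides with one of the two partite classes of $H$, namely with $\SuppPv\setminus\SuppPv'$ or with $H\cap(\SuppPv'\setminus\SuppPv)$. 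This partitions $\I[\ell]$ into two disjoint classes $K_1$ and $K_2$. A short argument using the minimal-transversal witnesses for elements of $\SuppPv\setminus\SuppPv'$ (the case $\card{\SuppPv\setminus\SuppPv'}=1$ being slightly different from $\card{\SuppPv\setminus\SuppPv'}\ge 2$) will show that $H$ contains at least one edge and hence $H\cap(\SuppPv'\setminus\SuppPv)\neq\emptyset$. Evaluating $y\le\frac{1}{2}(\ExtPv+\ExtPv')$ at a coordinate in $\SuppPv\setminus\SuppPv'$ and at a coordinate in $H\cap(\SuppPv'\setminus\SuppPv)$ will then yield $\sum_{k\in K_1}\lambda_k=\sum_{k\in K_2}\lambda_k=\frac{1}{2}$.

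For $k\in K_2$ (the easy half) the characterization of $\SuppPu^k\cap H$ combined with~\trrefp{Lemma}{lem:graph:1}{lem:graph:1:a} gives $\SuppPu^k\subseteq\SuppPv'$, and minimality of the transversal $\SuppPv'$ will then force $\ExtPu^k=\ExtPv'$.

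The main obstacle is the other half, $k\in K_1$: the partite-connected hypothesis only controls $\SuppPu^k$ inside $H$, so a priori nothing rules out that $\SuppPu^k$ contains an isolated vertex of $\SuppPv'\setminus\SuppPv$ lying outside $H$. To handle this I will bootstrap from the previous paragraph: since $\sum_{k\in K_2}\lambda_k\ExtPu^k=\frac{1}{2}\ExtPv'$, the hypothesis $y\le\frac{1}{2}(\ExtPv+\ExtPv')$ becomes $\sum_{k\in K_1}\lambda_k\ExtPu^k\le\frac{1}{2}\ExtPv$, whose right-hand side vanishes on $\I\setminus\SuppPv$. This forces $(\ExtPu^k)_q=0$ for every $k\in K_1$ and every $q\notin\SuppPv$, so $\SuppPu^k\subseteq\SuppPv$, and minimality of $\SuppPv$ will give $\ExtPu^k=\ExtPv$. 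Finally, strictness of the convex combination forces $\card{K_1}=\card{K_2}=1$, so $\ell=2$ and, after relabeling, $\ExtPu^1=\ExtPv$ and $\ExtPu^2=\ExtPv'$.
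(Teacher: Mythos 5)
Your argument is correct and follows the paper's own proof in all essentials: the same use of \trrefp{Lemma}{lem:graph:1}{lem:graph:1:b} propagated through the component containing $\SuppPv\setminus\SuppPv'$ to split the $\ExtPu^k$ into those containing all of $\SuppPv\setminus\SuppPv'$ and those containing none of it, the same identification $\ExtPu^k=\ExtPv'$ for the latter class via \trrefp{Lemma}{lem:graph:1}{lem:graph:1:a} and minimality of transversals, and the same mass-balance step yielding $\sum_{k\in K_1}\lambda_k \ExtPu^k\le\frac{1}{2}\,\ExtPv$ and hence $\ExtPu^k=\ExtPv$ for the former class. Your only (correct but avoidable) detour is proving that $H$ meets $\SuppPv'\setminus\SuppPv$ so as to get $\sum_{k\in K_2}\lambda_k=\frac{1}{2}$ exactly; the paper sidesteps this by evaluating only at a coordinate of $\SuppPv\setminus\SuppPv'$, which gives $\sum_{k\in K_2}\lambda_k\ge\frac{1}{2}$, and that inequality already suffices for the subtraction step because $\ExtPv'\ge\zero$.
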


\begin{proof}
To prove the lemma it is enough to show that, for each $k\in \I[\ell]$, either $\ExtPu^k = \ExtPv$ or $\ExtPu^k = \ExtPv'$.

Without loss of generality, assume that the partite set $\SuppPv \setminus \SuppPv'$ is contained in a component of $\G(\ExtPv,\ExtPv')$.

Let $I=\left\{ k\in \I[\ell] \mid \SuppPu^k \cap (\SuppPv \setminus \SuppPv')  = \emptyset\right\}$ and $J= \I[\ell] \setminus I$.

Observe that for $k\in I$ we must have $\SuppPu^k \subseteq \SuppPv'$ as
$\SuppPu^k \subseteq \SuppPv\cup \SuppPv'$ by \trrefp{Lemma}{lem:graph:1}{lem:graph:1:a}. Thus, since $\ExtPv'$ and $\ExtPu^k$ are binary vertices of $\Q(A)$, we conclude that
\begin{equation}
   \label{equ:nestor:1}
   \ExtPu^k = \ExtPv'
   \quad\text{for all }k\in I.
\end{equation}

On the other hand, for $k\in J$, let us fix $p\in \SuppPu^k \cap (\SuppPv \setminus \SuppPv')$.
Since $\SuppPv \setminus \SuppPv'$ is contained in a component of $\G(\ExtPv,\ExtPv')$,
for each $q\in (\SuppPv \setminus \SuppPv')\setminus \{p\}$ there exists a path
$p = p_1$, $p'_2$,$\dots$, $p'_h$, $p_h = q$ connecting $p$ and $q$,
where $p'_i \in \SuppPv' \setminus \SuppPv$ and $p_i\in \SuppPv \setminus \SuppPv'$ for $i = 2,\dots,h$.
Using repeatedly \trrefp{Lemma}{lem:graph:1}{lem:graph:1:b}, we see that $p_1 = p\in \SuppPu^k$, $p'_2\notin \SuppPu^k$, $p_2\in \SuppPu^k$, and so on, i.e., $p_i\in \SuppPu^k$ and $p'_i\notin \SuppPu^k$ for all $i = 2,\dots, h$. Thus, in particular we have $q= p_h \in \SuppPu^k$. Since this holds for any $q\in (\SuppPv \setminus \SuppPv')\setminus \{p\}$, we conclude that
\begin{equation}
   \label{equ:nestor:2}
   \SuppPv \setminus \SuppPv'\subseteq \SuppPu^k
   \quad\text{for all } k\in J.
\end{equation}

Consider now any $q\in \SuppPv \setminus \SuppPv'$. From~\eqref{equ:nestor:1}, \eqref{equ:nestor:2} and the fact that $\frac{1}{2}\,(\ExtPv + \ExtPv')\geq y= \sum_{k\in \I[\ell]} \lambda_k \ExtPu^k$, we obtain
\[
\frac{1}{2}=\frac{1}{2}\,(\ExtPv_{q} + \ExtPv'_{q})\geq \sum_{k\in I} \lambda_k \ExtPu^k_{q} + \sum_{k\in J} \lambda_k \ExtPu^k_{q}=\sum_{k\in J} \lambda_k\; ,
\]
and so $\sum_{k\in I} \lambda_k=1-\sum_{k\in J} \lambda_k\geq \frac{1}{2}$. Since using~\eqref{equ:nestor:1} we also have
\[
\frac{1}{2}\,(\ExtPv + \ExtPv')\geq \sum_{k\in I} \lambda_k \ExtPu^k + \sum_{k\in J} \lambda_k \ExtPu^k=\sum_{k\in I} \lambda_k \ExtPv' + \sum_{k\in J} \lambda_k \ExtPu^k,
\]
we conclude that $\frac{1}{2}\, \ExtPv \geq \sum_{k\in J} \lambda_k \ExtPu^k$, which implies $\SuppPu^k\subseteq \SuppPv$ for all $k\in J$. Therefore, since $\ExtPv$ and $\ExtPu^k$ are binary vertices of $\Q(A)$, it follows that $\ExtPu^k = \ExtPv$ for all $k\in J$. This completes the proof. \qedhere
\end{proof}

Using \tref{Proposition}{propo:adys:1} and \tref{Lemma}{lem:CondImplicaAdy}, we obtain the main result of this section:

\begin{thm}
\label{thm:suf}
If the joint saturation graph of two distinct vertices of $\Q(A)$ is partite-connected, then these vertices are adjacent in $\Q(A)$.
\end{thm}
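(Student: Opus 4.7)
The proof essentially combines the two ingredients already developed: the characterization of adjacency in Proposition~\ref{propo:adys:1}(\ref{propo:adys:1:c}) and Lemma~\ref{lem:CondImplicaAdy}. The plan is as follows.

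Let $\ExtPv$ and $\ExtPv'$ be distinct vertices of $\Q(A)$ whose joint saturation graph $\G(\ExtPv,\ExtPv')$ is partite-connected. To conclude that they are adjacent, I would invoke the equivalence \ref{propo:adys:1:a}$\Leftrightarrow$\ref{propo:adys:1:c} of Proposition~\ref{propo:adys:1}: it suffices to check that whenever a strict convex combination $y = \sum_{k\in\I[\ell]}\lambda_k \ExtPu^k$ of vertices of $\Q(A)$ satisfies $y \le \frac{1}{2}(\ExtPv+\ExtPv')$, one necessarily has $\ell = 2$ and, up to relabeling, $\ExtPu^1 = \ExtPv$ and $\ExtPu^2 = \ExtPv'$.

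This is precisely the content of Lemma~\ref{lem:CondImplicaAdy} under the hypothesis that $\G(\ExtPv,\ExtPv')$ is partite-connected. Hence the required condition \ref{propo:adys:1:c} holds, and the theorem follows directly.

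In short, the proof is just a two-line assembly: apply Lemma~\ref{lem:CondImplicaAdy}, then apply Proposition~\ref{propo:adys:1}. All the work has been done beforehand — the substantive step is Lemma~\ref{lem:CondImplicaAdy}, where partite-connectedness is used to force the supports of the $\ExtPu^k$ into either $\SuppPv'$ (via the trivial inclusion from Lemma~\ref{lem:graph:1}(\ref{lem:graph:1:a})) or all of $\SuppPv\setminus\SuppPv'$ (by propagating along paths via Lemma~\ref{lem:graph:1}(\ref{lem:graph:1:b})), followed by the averaging/domination argument bounding the $\lambda_k$'s. There is no remaining obstacle at the level of the theorem itself.
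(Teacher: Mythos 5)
Your proposal is correct and is exactly the paper's argument: the paper derives Theorem~\ref{thm:suf} by combining Lemma~\ref{lem:CondImplicaAdy} with the equivalence in Proposition~\ref{propo:adys:1}(\ref{propo:adys:1:c}), precisely as you describe. No gaps.
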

 
\tref{Theorem}{thm:suf} implies that if $\G(\ExtPv,\ExtPv')$ is connected, then $\ExtPv$ and $\ExtPv'$ are adjacent in $\Q(A)$. This
sufficient condition can be alternatively derived from \tref{Corollary}{coro:chacarct:two:ones} and \tref{Remark}{rem:contraction}. To see this, let $I$, $\bar{A}$, $G$, $\ExtPv_{\I \setminus I}$ and $\ExtPv'_{\I \setminus I}$ be defined as in~\tref{Remark}{rem:contraction}, and consider the subgraph of $G$ induced by $\SuppPv\symdif \SuppPv'$, which coincides with $\G(\ExtPv,\ExtPv')$. If this subgraph is connected, then $\ExtPv_{\I \setminus I}$ and $\ExtPv'_{\I \setminus I}$ are adjacent in $\Q(\bar{A})$ (by \tref{Corollary}{coro:chacarct:two:ones}), which implies they are also adjacent in $\Q(A/I)$, and so $\ExtPv$ and $\ExtPv'$ are adjacent in $\Q(A)$ (by \tref{Remark}{rem:contraction} we know that $\ExtPv_{\I \setminus I}$ and $\ExtPv'_{\I \setminus I}$ are adjacent in $\Q(A/I)$ if and only if $\ExtPv$ and $\ExtPv'$ are adjacent in $\Q(A)$). 

\section{Characterization of vertex adjacency for row circular matrices}
\label{sec:nec}

As mentioned in the \thref{introduction}{sec:intro}, the sufficient condition of \tref{Theorem}{thm:suf} is not always necessary.
In this section we show that the converse of that theorem is true
when the matrix $A$ is row circular.
Actually, in this case we will give a much more detailed characterization
in terms of properties of the joint saturation graph.
We will also show that being partite-connected is far from being a necessary condition when we consider the similar class of circulant matrices.

Let us first recall that the \emph{circulant matrix} $\cte(c)\in \RR^{n\times n}$ associated with a vector $c = (c_1,\dots,c_n)\in\RR^n$ is defined as
\[
   \cte(c) = \cte(c_1,\dots,c_n) =
      \begin{bmatrix}
      c_1 & c_2 & \dots & c_n \\
      c_n & c_1 & \dots & c_{n-1} \\
      \vdots & \vdots & \ddots & \vdots \\
      c_2 & c_3 & \dots & c_1
      \end{bmatrix},
\]
where each row is a right rotation (shift) of the previous one.

Following Bartholdi et al.~\cite{BOR80}, we will say that a binary vector is \emph{circular} if its ones occur consecutively, where the first entry and the last entry of the vector are considered to be consecutive, or, alternatively, if either the ones are all consecutive or the zeroes are all consecutive.
A binary matrix is said to be \emph{row circular} if all its rows are circular.

To deal with circular vectors of $\BB^n$, it is convenient to consider circular arcs of $\I$: for $i,j \in \I$, the \emph{(directed) circular arc} $\arc{i,j}$ is defined as
\[
\arc{i,j} = 
\begin{cases}
\{i,\dots,j\} & \text{if } i \le j , \\
\{i,\dots,n\} \cup \{1,\dots,j\} & \text{if } j < i .
\end{cases}
\]
Thus, the rows of a row circular matrix $A\in\BB^{m\times n}$ may be considered as the characteristic vectors of circular arcs of $\I$.

\begin{exam}
\label{exam:cnk:1}
A particularly interesting case of row circular matrices is that of the \emph{consecutive ones circulant matrices} $\cnk$,
where the sets in the clutter $\cov$ are of the form
\[
C_t = \{t, t + 1,\dots, t + k - 1\}, \quad t\in\I,
\]
(sums are taken modulo $n$ with values in $\I$) so that $\cnk$ is also circulant.
For example,
\[
   \C{3}{2} = \begin{bmatrix}
      1 & 1 & 0 \\
      0 & 1 & 1  \\
      1 & 0 & 1
      \end{bmatrix} = \cte(1,1,0).
\]
\end{exam}

In the remainder of this section, we will make the following assumptions:

\begin{assums}
\label{assums:B}
The matrix $A$ is row circular and satisfies \tref{Assumptions}{assums:A}, and $\cov$ is the associated clutter (as described in \tref{Section}{sec:background}).
\end{assums}

Thus, in particular we have $m = \card{\cov}\ge 2$,
$2\le\card{C_t}\le n-1$ for all $t\in \I[m]$, and
$2\le \ExtPv\cdot \one \le n-1$ for every vertex $v$ of $\Q(A)$.

We will also use the following convention:

\begin{notat}
For $\ExtPv\in\BB^n$ we will write $\SuppPv = \{p_1,p_2,\dots,p_r\}$,
with $ p_1 < p_2 <\dots< p_r$;
and for $h\notin\I[r]$ we let $p_h = p_i$
with $i\in\I[r]$ and $h\equiv i \pmod{r}$.
Similarly, for $\ExtPv'\in\BB^n$ we will write
$\SuppPv' =  \{p'_1,\dots,p'_{r'}\}$
with $p'_1 < \dots < p'_{r'}$, etc.

We will also consider that operations involving elements of the support of a vector of $\BB^n$, such as $p_i+1$, are taken modulo $n$ with values in $\I$.
\end{notat}

\begin{rem}\label{rem:graph:2}
When $A$ is row circular, if $p\ngh p'$ (i.e., \eqref{equ:edge} is satisfied), exactly one of the circular arcs $\arc{p,p'}$ or $\arc{p',p}$ is such that its intersection with $\SuppPv\cup \SuppPv'$ is $\{p,p'\}$. Indeed, by~\tref{Remark}{rem:card:T} we know that $\SuppPv$ has at least two elements, and so there exists $q\in \SuppPv$ such that  $q\neq p$ (note that we also have $q\neq p'$ because $p'\in \SuppPv' \setminus \SuppPv$ by~\tref{Definition}{defn:G}). Besides, since $A$ is row circular, the circular arc $C_t$ satisfying~\eqref{equ:edge} contains either $\arc{p,p'}$ or $\arc{p',p}$. Then, if $q\in  \arc{p,p'}$, we have $\{ q ,p , p'\}\subset (\SuppPv\cup \SuppPv')\cap \arc{p,p'}$, and so by~\eqref{equ:edge} we conclude that $\arc{p',p} \subset C_t$ and $\{p,p'\} = (\SuppPv\cup \SuppPv')\cap \arc{p',p}$. Otherwise, i.e., if $q\in  \arc{p',p}$, we have $\{ q ,p , p'\}\subset (\SuppPv\cup \SuppPv')\cap \arc{p',p}$, and so from~\eqref{equ:edge} it follows that $\arc{p,p'} \subset C_t$ and $\{p,p'\} = (\SuppPv\cup \SuppPv')\cap \arc{p,p'}$.
\end{rem}

Let us state now some simple results.

\begin{lem}
\label{lem:nec:1}
Suppose $C_t\in\cov$ and $\ExtPv$ is a vertex of $\Q(A)$.
Then
\[
1\le \card{C_t\cap \SuppPv }\le 2.
\]
Moreover, if $\card{C_t\cap \SuppPv } = 2$,
then $C_t\cap \SuppPv = \{p_i, p_{i+1}\}$ for some $i$.
\end{lem}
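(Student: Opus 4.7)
The plan is to break the statement into three pieces. The lower bound $|C_t \cap \supp v| \ge 1$ is immediate: since $v$ is a vertex of $\Q(A)$, its support is a minimal transversal of $\cov$ (as recalled in~\tref{Section}{sec:background}), and hence meets every element of $\cov$. The structural claim---that if $|C_t \cap \supp v| = 2$ then the intersection has the form $\{p_i, p_{i+1}\}$---will follow from row circularity, because $C_t$ is a circular arc of $\I$ and $\supp v$ inherits its cyclic order from $\I$: the trace $C_t \cap \supp v$ must be a cyclically consecutive block $\{p_i, p_{i+1},\dots,p_{i+s-1}\}$ (with indices mod $r = |\supp v|$), which in the case $s = 2$ gives exactly the desired form.

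The main work is the upper bound $|C_t \cap \supp v| \le 2$, which I would prove by contradiction. Suppose $|C_t \cap \supp v| \ge 3$; by the structural observation above I can find three cyclically consecutive elements $p_i, p_{i+1}, p_{i+2}$ of $\supp v$ all lying in $C_t$. Applying the minimality of the transversal $\supp v$ to the point $p_{i+1}$ yields some $C^* \in \cov$ with $C^* \cap \supp v = \{p_{i+1}\}$, and I would then derive a contradiction by showing $C^* \subsetneq C_t$, which violates the hypothesis from~\tref{Assumptions}{assums:A} that $A$ has no dominating rows. The intuition behind the inclusion is that $C^*$, a circular arc containing $p_{i+1}$ but avoiding $p_i$ and $p_{i+2}$, must sit inside the open arc $\arc{p_i+1, p_{i+2}-1}$ passing through $p_{i+1}$; whereas $C_t$, a circular arc containing $p_i$, $p_{i+1}$ and $p_{i+2}$, must contain the closed arc $\arc{p_i, p_{i+2}}$. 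Chaining $C^* \subseteq \arc{p_i+1, p_{i+2}-1} \subseteq \arc{p_i, p_{i+2}} \subseteq C_t$ and noting that $p_i \in C_t \setminus C^*$ yields the strict inclusion.

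The main obstacle is justifying the arc-inclusion step cleanly while handling wrap-around cases uniformly. The critical input is that $p_i, p_{i+1}, p_{i+2}$ are cyclically consecutive in $\supp v$, so no other element of $\supp v$ lies on the arc from $p_i$ to $p_{i+2}$ that passes through $p_{i+1}$; combined with the fact that a proper circular arc of $\I$ is a connected segment determined by its two endpoints, this forces the two arcs through $p_{i+1}$ to nest as described and completes the proof.
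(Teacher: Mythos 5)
Your proposal is correct and follows essentially the same route as the paper: the lower bound from the transversal property, the consecutive-block form from $C_t$ being a circular arc, and the upper bound by taking a middle element $p_j$ of three points of $\supp\ExtPv$ in $C_t$, extracting $C_s\in\cov$ with $C_s\cap\supp\ExtPv=\{p_j\}$ via minimality, and nesting $C_s\subsetneq C_t$ to contradict the no-dominating-rows assumption. The only cosmetic difference is that the paper phrases the middle point as any $p_j\in\arc{p_i,p_h}\subset C_t$ rather than insisting on three consecutive elements, which sidesteps some of the orientation bookkeeping you flag as the main obstacle.
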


\begin{proof}
We obviously have $\card{C_t\cap \SuppPv }\ge 1$ because $\SuppPv$ is a transversal. 
If we had $\card{C_t\cap \SuppPv }\ge 3$, since $C_t$ is a circular arc, 
there would exist three different elements $p_i$, $p_j$ and $p_h$ of $\SuppPv$ such that $p_j\in \arc{p_i,p_h}\subset C_t$. Besides, as $\SuppPv$ is a minimal transversal,
there exists $C_s\in\cov$ such that $C_s\cap \SuppPv = \{p_j\}$.
Then, since $C_s$ is a circular arc which contains $p_j$ but does not contain $p_i$ nor $p_h$,
we would have $C_s\subsetneq \arc{p_i,p_h}\subset C_t$, contradicting the fact that $A$ has no dominating rows (by \tref{Assumptions}{assums:A}). Thus, we necessarily have $\card{C_t\cap \SuppPv }\le 2$. 

The last part follows from the fact that $C_t$ is a circular arc.\qedhere
\end{proof}

\begin{lem}\label{lem:nec:4}
Let $\ExtPv$ and $\ExtPv'$ be distinct vertices of $\Q(A)$. Suppose $p_i\in \SuppPv \setminus \SuppPv'$ is an isolated node of the joint saturation graph $\G(\ExtPv,\ExtPv')$. If $C_t\in\cov$ and $C_t\cap \SuppPv = \{p_i \}$, then $\card{C_t\cap (\SuppPv' \setminus \SuppPv)} = 2$ and $C_t\cap \SuppPv' = C_t\cap (\SuppPv' \setminus \SuppPv)$. 
\end{lem}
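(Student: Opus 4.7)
The plan is to combine the cardinality bound from \tref{Lemma}{lem:nec:1} with the hypothesis that $p_i$ is isolated in $\G(\ExtPv,\ExtPv')$. First I would apply \tref{Lemma}{lem:nec:1} to the vertex $\ExtPv'$ together with the row indexed by $t$ to obtain $1\le \card{C_t \cap \SuppPv'} \le 2$, the lower bound following from $\SuppPv'$ being a transversal and the upper bound being the content of that lemma.

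The core step is to rule out the case $\card{C_t \cap \SuppPv'} = 1$ by contradiction. If $C_t \cap \SuppPv' = \{p'_j\}$ for some index $j$, then $p'_j \neq p_i$ because $p_i \notin \SuppPv'$, and moreover $p'_j \notin \SuppPv$: otherwise $p'_j$ would belong to $C_t \cap \SuppPv = \{p_i\}$, forcing $p'_j = p_i$. Hence $p'_j \in \SuppPv' \setminus \SuppPv$, and the row $C_t$ witnesses equation~\eqref{equ:edge} for the pair $(p_i, p'_j)$, producing an edge $p_i \ngh p'_j$ in $\G(\ExtPv,\ExtPv')$ and contradicting the isolation of $p_i$. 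Therefore $\card{C_t \cap \SuppPv'} = 2$.

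To conclude, I would observe that every element of $C_t \cap \SuppPv'$ must lie outside $\SuppPv$: such an element cannot equal $p_i$ since $p_i \notin \SuppPv'$, and it cannot be any other element of $\SuppPv$ since $C_t \cap \SuppPv = \{p_i\}$. This gives $C_t \cap \SuppPv' \subseteq \SuppPv' \setminus \SuppPv$, which immediately yields both claimed identities $\card{C_t \cap (\SuppPv' \setminus \SuppPv)} = 2$ and $C_t \cap \SuppPv' = C_t \cap (\SuppPv' \setminus \SuppPv)$. There is no real obstacle here: the lemma is essentially a direct consequence of \tref{Lemma}{lem:nec:1} and the definition of the joint saturation graph; the only substantive observation is that a singleton intersection of $C_t$ with $\SuppPv'$ would reintroduce the forbidden edge at $p_i$.
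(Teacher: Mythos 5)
Your proof is correct and follows essentially the same route as the paper: both arguments combine the bound $1\le\card{C_t\cap\SuppPv'}\le 2$ from \tref{Lemma}{lem:nec:1} with the observation that $C_t\cap\SuppPv'\subseteq\SuppPv'\setminus\SuppPv$ (forced by $C_t\cap\SuppPv=\{p_i\}$ and $p_i\notin\SuppPv'$), and then rule out cardinality one because it would create an edge at $p_i$ via~\eqref{equ:edge}. The only difference is the order in which the two observations are made, which is immaterial.
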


\begin{proof}
In the first place, observe that $C_t\cap \SuppPv' = C_t\cap(\SuppPv' \setminus \SuppPv)$ because $C_t\cap \SuppPv = \{p_i\}$ and $p_i\in \SuppPv \setminus \SuppPv'$. Then, by \tref{Lemma}{lem:nec:1}, we have  
$1\le \card{C_t\cap (\SuppPv' \setminus \SuppPv)}\le 2$. Now, note that we cannot have 
$\card{C_t\cap (\SuppPv' \setminus \SuppPv)}=1$, because by~\tref{Definition}{defn:G} (see in particular~\eqref{equ:edge}) that would mean that there exists an edge in  $\G(\ExtPv,\ExtPv')$ connecting $p_i$ with the unique element of $C_t\cap (\SuppPv' \setminus \SuppPv)$, contradicting the fact that $p_i$ is an isolated node of $\G(\ExtPv,\ExtPv')$. \qedhere     
\end{proof}

The next lemmas provide simple properties of the joint saturation graph when $A$ is row circular, which we will need to establish the characterization of vertex adjacency for $\Q(A)$.

\begin{lem}
\label{lem:nec:5}
The joint saturation graph $\G(\ExtPv,\ExtPv')$ of two distinct vertices $\ExtPv$ and $\ExtPv'$ of $\Q(A)$ has the following properties:
\begin{enumcona}

\item\label{lem:nec:5:2}
If $p_i\ngh p'_j$ and $p_h \ngh p'_j$ in $\G(\ExtPv,\ExtPv')$, with $h\neq i$, then we must have
either $h = i - 1$ or $h = i + 1$.

\item\label{lem:nec:5:3}
If $p_i\ngh p'_j$, $p_{i+1}\ngh p'_j$ and $\card{\SuppPv} > 2$, then $p'_j \in \arc{p_i, p_{i+1}}$ and there are no other elements of $\SuppPv'$ (or $\SuppPv$) in $\arc{p_i, p_{i+1}}$.\footnote{Notice that if $\card{\SuppPv} = 2$, then $p'_j$ could be in either $\arc{p_1, p_2}$ or $\arc{p_2, p_1}$.}

\item\label{lem:nec:5:1} 
The nodes of $\G(\ExtPv,\ExtPv')$ have degree at most $2$. 

\item\label{lem:nec:5:4} 
Each component of $\G(\ExtPv,\ExtPv')$ must be either a cycle or a path (including isolated nodes). 

\item\label{lem:nec:5:5}
If a component of $\G(\ExtPv,\ExtPv')$ is a cycle,
then its set of nodes is equal to $\SuppPv\cup \SuppPv'$, and we have $\SuppPv\cap \SuppPv' = \emptyset$.
In particular, $\G(\ExtPv,\ExtPv')$ is connected. 

\end{enumcona}
\end{lem}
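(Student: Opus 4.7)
My approach exploits the geometric characterization from \tref{Remark}{rem:graph:2}: an edge $p \ngh p'$ records that one of the two circular arcs joining $p$ and $p'$ is empty of $\SuppPv \cup \SuppPv'$ save for the endpoints. Thus edges encode \emph{cyclic adjacencies} in the natural order of $\SuppPv \cup \SuppPv'$ on $\I$, and each of the five parts becomes a combinatorial statement about how such adjacencies interact.

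For part~(\ref{lem:nec:5:2}), given $p_i \ngh p'_j$ and $p_h \ngh p'_j$ with $h \ne i$, the two empty arcs provided by \tref{Remark}{rem:graph:2} must extend from $p'_j$ in opposite directions; otherwise, $p_i$ and $p_h$ would both be the unique element of $\SuppPv \cup \SuppPv'$ immediately past $p'_j$ on the same side, forcing $p_i = p_h$. Concatenating them gives an arc from $p_i$ to $p_h$ passing through $p'_j$ whose only intersection with $\SuppPv \cup \SuppPv'$ is $\{p_i, p'_j, p_h\}$; in particular no other element of $\SuppPv$ lies strictly between $p_i$ and $p_h$, which is exactly $h = i \pm 1$. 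Part~(\ref{lem:nec:5:1}) is the symmetric version of~(\ref{lem:nec:5:2}) applied to $p_i$: three distinct neighbors in $\SuppPv' \setminus \SuppPv$ would force three pairwise consecutive indices in $\SuppPv'$, which is impossible. Part~(\ref{lem:nec:5:4}) then follows from the standard fact that a simple graph with maximum degree at most $2$ is a disjoint union of paths and cycles.

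Part~(\ref{lem:nec:5:3}) uses the same concatenation idea: the union of the two empty arcs meeting at $p'_j$ is an arc from $p_i$ to $p_{i+1}$ containing $p'_j$ and no other element of $\SuppPv \cup \SuppPv'$. Under $\card{\SuppPv} > 2$, there is some $p_k \in \SuppPv$ with $k \ne i, i+1$; this $p_k$ must lie in the complementary arc, forcing the empty arc to be precisely $\arc{p_i, p_{i+1}}$.

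Part~(\ref{lem:nec:5:5}) is the main difficulty. Suppose a cycle component has vertices $p_{i_1}, p'_{j_1}, \ldots, p_{i_k}, p'_{j_k}$ listed around the cycle. By~(\ref{lem:nec:5:2}), $i_{\ell+1} = i_\ell \pm 1$ at each step. For $k \ge 3$, two consecutive opposite signs would give $p_{i_{\ell+2}} = p_{i_\ell}$, violating simplicity, so the signs are constant and WLOG $i_{\ell+1} = i_\ell + 1$. Cycle closure then forces $k \equiv 0 \pmod{\card{\SuppPv}}$, and since a simple cycle has at most $\card{\SuppPv}$ vertices on the $\SuppPv$-side, $k = \card{\SuppPv}$. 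In particular every element of $\SuppPv$ is a node of $\G(\ExtPv,\ExtPv')$, so $\SuppPv \cap \SuppPv' = \emptyset$. By~(\ref{lem:nec:5:3}), each $p'_{j_\ell}$ is the unique element of $\SuppPv \cup \SuppPv'$ in $\arc{p_{i_\ell}, p_{i_{\ell+1}}}$ other than the endpoints, and these $k$ arcs tile $\I$; hence $\SuppPv \cup \SuppPv'$ equals the vertex set of the cycle and $\G(\ExtPv, \ExtPv')$ is connected. The residual case $k = 2$ with $\card{\SuppPv} = 2$ is handled via \tref{Remark}{rem:graph:2} directly: the four empty arcs of the $4$-cycle place $p_1, p'_{j_1}, p_2, p'_{j_2}$ in cyclic order on $\I$ and tile $\I$ with four arcs each containing no element of $\SuppPv \cup \SuppPv'$ besides its endpoints, so no further element can exist. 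The main obstacle throughout is the direction-and-parity bookkeeping around the cycle; once sign consistency is established the remaining steps are short.
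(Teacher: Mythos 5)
Your overall strategy is the same as the paper's: concatenate, through the common neighbour, the two ``empty'' arcs supplied by \tref{Remark}{rem:graph:2}, read off consecutiveness and the location of $p'_j$ from that concatenated arc, and then let a cycle wind once around $\I$. Parts~\refp{lem:nec:5:2}, \refp{lem:nec:5:3} and \refp{lem:nec:5:4} are fine. However, your justification of the degree bound~\refp{lem:nec:5:1} has a genuine gap. You claim that three distinct neighbours of $p_i$ in $\SuppPv'\setminus\SuppPv$ would give ``three pairwise consecutive indices in $\SuppPv'$, which is impossible.'' This is false when $\card{\SuppPv'}=3$: in the cyclic order on a three-element set every pair of indices is consecutive, so part~\refp{lem:nec:5:2} alone yields no contradiction. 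To dispose of this case you must invoke the positional information of part~\refp{lem:nec:5:3}, as the paper does: if $p'_{j-1}$, $p'_{j}$ and $p'_{j+1}$ were all neighbours of $p_i$, then $p_i$ would have to lie both in $\arc{p'_{j-1},p'_{j}}$ and in $\arc{p'_{j},p'_{j+1}}$, whose intersection is $\{p'_j\}$ and hence does not contain $p_i$. Since parts~\refp{lem:nec:5:4} and~\refp{lem:nec:5:5} rest on the degree bound, this missing case is not cosmetic.

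A smaller omission of the same kind occurs in part~\refp{lem:nec:5:5}: your sign-constancy argument assumes $k\ge 3$, and you then treat only ``$k=2$ with $\card{\SuppPv}=2$'' as residual, but nothing you wrote rules out a $4$-cycle when $\card{\SuppPv}>2$. That configuration is indeed impossible --- by part~\refp{lem:nec:5:3} both $p'_{j_1}$ and $p'_{j_2}$ would be the unique element of $\SuppPv'$ interior to $\arc{p_{i_1}, p_{i_2}}$, forcing $p'_{j_1}=p'_{j_2}$ --- but the exclusion has to be stated. With these two repairs the argument is correct and follows essentially the paper's route.
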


\begin{proof}
Let us assume that $C_s\in \cov$ is such that
\begin{equation}\label{Arco1}
   C_s\cap \SuppPv = \{p_i\} \; \makebox{ and }\;
   C_s\cap \SuppPv' = \{p'_j\}\; 
\end{equation}
and $C_r\in \cov$ such that
\begin{equation}\label{Arco2}
   C_r\cap \SuppPv = \{p_h\} \; \makebox{ and } \;
   C_r\cap \SuppPv' = \{p'_j\}.
\end{equation}
Since $p'_j\in C_s\cap C_r $, it follows that $C_s\cup C_r $ is a circular arc. By~\eqref{Arco1} and~\eqref{Arco2}, this circular arc intersects $\SuppPv$ only at $p_i$ and $p_h$, and hence these are consecutive elements of $\SuppPv$.
This shows~\refp{lem:nec:5:2}.

To prove~\refp{lem:nec:5:3}, let $C_s$ and $C_r$ be circular arcs satisfying~\eqref{Arco1} and~\eqref{Arco2}, with $h=i+1$ in~\eqref{Arco2}. Then, as above, we can conclude that $C_s\cup C_r $ is a circular arc which intersects $\SuppPv$ only at $p_i$ and $p_{i+1}$, and $\SuppPv'$ only at $p'_j$. Besides, since $\card{\SuppPv} > 2$, there exists an element $p_k$ in $\arc{p_{i+1}, p_i}\cap \SuppPv$ which is different from $p_i$ and $p_{i+1}$. Observe that \tref{Remark}{rem:graph:2} would not hold for $p_i\ngh p'_j$ if $p'_j$ belonged to $\arc{p_{i+1}, p_k}$. Analogously, note that \tref{Remark}{rem:graph:2} would not hold for $p_{i+1}\ngh p'_j$ if $p'_j$ belonged to $\arc{p_k, p_i}$. Therefore, we conclude that $p'_j\in \arc{p_i, p_{i+1}}$. Finally, since $p_k \in \arc{p_{i+1}, p_i}$ and $C_s\cup C_r $ is a circular arc which contains $p_i$ and $p_{i+1}$ but does not contain $p_k$, we have $\arc{p_i, p_{i+1}} \subset C_s\cup C_r $. Thus, from~\eqref{Arco1} and~\eqref{Arco2} (recall that $h=i+1$ in~\eqref{Arco2}), it follows that $p_i$, $p_{i+1}$ and $p'_j$ are the only elements of $\SuppPv \cup \SuppPv'$ in $\arc{p_i , p_{i+1}}$. 

Assume that a node of $\G(\ExtPv,\ExtPv')$, for instance $p'_j$, has degree strictly greater than $2$. Let $p_i$, $p_h$ and $p_k$ be three different elements of $\SuppPv \setminus \SuppPv'$ such that $p_i\ngh p'_j$, $p_h\ngh p'_j$ and $p_k\ngh p'_j$. By~\refp{lem:nec:5:2} we can assume, without loss of generality, that $h=i-1$ and $k=i+1$.  Then, by~\refp{lem:nec:5:3} we have $p'_j\in \arc{p_{i-1}, p_i}$ and $p'_j\in \arc{p_i, p_{i+1}}$, which is a contradiction because $p'_j \in \SuppPv' \setminus  \SuppPv$ by~\tref{Definition}{defn:G} and $\arc{p_{i-1}, p_i}\cap  \arc{p_i, p_{i+1}} = \{p_i\}$ (recall that $p_{i-1}=p_h\neq p_k=p_{i+1}$). This proves~\refp{lem:nec:5:1}. 

Note that~\refp{lem:nec:5:4} follows readily from~\refp{lem:nec:5:1}. 

By~\refp{lem:nec:5:2} and~\refp{lem:nec:5:3},  any (simple) path of $\G(\ExtPv,\ExtPv')$ connecting two nodes of $\SuppPv\setminus \SuppPv'$ is of the form $p_i$, $p'_j$, $p_{i+1}$, $p'_{j+1}$, $\dots $, $p_{i+\ell}$ for some $\ell \in \NN$, $i\in\I[r]$ and $j\in\I[r']$, where $p_{i+h}\in \arc{p'_{j+h-1}, p'_{j+h}}$ for any $h\in \I[\ell-1]$, and  $p'_{j+h-1}\in \arc{p_{i+h-1}, p_{i+h}}$ and 
\begin{equation}\label{IntervalPath}
\arc{p_{i}, p_{i+h}}\cap (\SuppPv\cup \SuppPv')=\{p_i, p'_j, p_{i+1}, p'_{j+1},\dots ,p_{i+h}\}
\end{equation}  
for any $h\in \I[\ell]$ (see~\tref{Example}{ExampleReferee}). Thus, if a component of $\G(\ExtPv,\ExtPv')$ is a cycle, we can take $p_{i+\ell}=p_i$ in~\eqref{IntervalPath}, which then implies $\I \cap (\SuppPv\cup \SuppPv')=\{p_i, p'_j, p_{i+1}, p'_{j+1},\dots ,p'_{i+l-1}\}$, that is, the set of nodes of the cycle is equal to $\SuppPv\cup \SuppPv'$. Finally, since by~\tref{Definition}{defn:G} the nodes of $\G(\ExtPv,\ExtPv')$ belong either to $\SuppPv\setminus \SuppPv'$ or to $\SuppPv' \setminus \SuppPv$, we conclude that $\SuppPv\cap \SuppPv' = \emptyset$. This shows~\refp{lem:nec:5:5}. \qedhere
\end{proof}

\begin{figure}
\begin{center}
\begin{picture}(0,0)%
\includegraphics{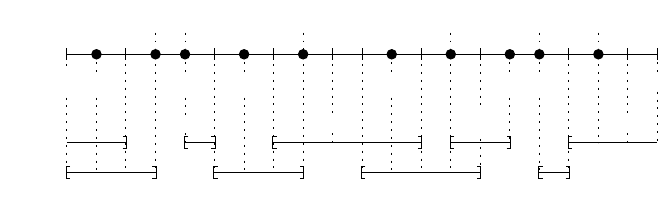}%
\end{picture}%
\setlength{\unitlength}{4144sp}%
\begingroup\makeatletter\ifx\SetFigFont\undefined%
\gdef\SetFigFont#1#2#3#4#5{%
  \reset@font\fontsize{#1}{#2pt}%
  \fontfamily{#3}\fontseries{#4}\fontshape{#5}%
  \selectfont}%
\fi\endgroup%
\begin{picture}(5022,1566)(6691,-9769)
\put(6706,-8686){\makebox(0,0)[lb]{\smash{{\SetFigFont{12}{14.4}{\rmdefault}{\mddefault}{\updefault}{\color[rgb]{0,0,0}$\I$}%
}}}}
\put(6751,-9451){\makebox(0,0)[lb]{\smash{{\SetFigFont{12}{14.4}{\rmdefault}{\mddefault}{\updefault}{\color[rgb]{0,0,0}$\cov$}%
}}}}
\put(10036,-8386){\makebox(0,0)[lb]{\smash{{\SetFigFont{12}{14.4}{\rmdefault}{\mddefault}{\updefault}{\color[rgb]{0,0,0}$p'_4$}%
}}}}
\put(11161,-8386){\makebox(0,0)[lb]{\smash{{\SetFigFont{12}{14.4}{\rmdefault}{\mddefault}{\updefault}{\color[rgb]{0,0,0}$p'_6$}%
}}}}
\put(10711,-8386){\makebox(0,0)[lb]{\smash{{\SetFigFont{12}{14.4}{\rmdefault}{\mddefault}{\updefault}{\color[rgb]{0,0,0}$p'_5$}%
}}}}
\put(8056,-8386){\makebox(0,0)[lb]{\smash{{\SetFigFont{12}{14.4}{\rmdefault}{\mddefault}{\updefault}{\color[rgb]{0,0,0}$p'_2$}%
}}}}
\put(8956,-8386){\makebox(0,0)[lb]{\smash{{\SetFigFont{12}{14.4}{\rmdefault}{\mddefault}{\updefault}{\color[rgb]{0,0,0}$p'_3$}%
}}}}
\put(7831,-8386){\makebox(0,0)[lb]{\smash{{\SetFigFont{12}{14.4}{\rmdefault}{\mddefault}{\updefault}{\color[rgb]{0,0,0}$p'_1$}%
}}}}
\put(7426,-9691){\makebox(0,0)[lb]{\smash{{\SetFigFont{12}{14.4}{\rmdefault}{\mddefault}{\updefault}{\color[rgb]{0,0,0}$C_1$}%
}}}}
\put(8551,-9691){\makebox(0,0)[lb]{\smash{{\SetFigFont{12}{14.4}{\rmdefault}{\mddefault}{\updefault}{\color[rgb]{0,0,0}$C_3$}%
}}}}
\put(9811,-9691){\makebox(0,0)[lb]{\smash{{\SetFigFont{12}{14.4}{\rmdefault}{\mddefault}{\updefault}{\color[rgb]{0,0,0}$C_5$}%
}}}}
\put(10801,-9691){\makebox(0,0)[lb]{\smash{{\SetFigFont{12}{14.4}{\rmdefault}{\mddefault}{\updefault}{\color[rgb]{0,0,0}$C_7$}%
}}}}
\put(7381,-8881){\makebox(0,0)[lb]{\smash{{\SetFigFont{12}{14.4}{\rmdefault}{\mddefault}{\updefault}{\color[rgb]{0,0,0}$p_1$}%
}}}}
\put(8056,-8881){\makebox(0,0)[lb]{\smash{{\SetFigFont{12}{14.4}{\rmdefault}{\mddefault}{\updefault}{\color[rgb]{0,0,0}$p_2$}%
}}}}
\put(8506,-8881){\makebox(0,0)[lb]{\smash{{\SetFigFont{12}{14.4}{\rmdefault}{\mddefault}{\updefault}{\color[rgb]{0,0,0}$p_3$}%
}}}}
\put(9631,-8881){\makebox(0,0)[lb]{\smash{{\SetFigFont{12}{14.4}{\rmdefault}{\mddefault}{\updefault}{\color[rgb]{0,0,0}$p_4$}%
}}}}
\put(10531,-8881){\makebox(0,0)[lb]{\smash{{\SetFigFont{12}{14.4}{\rmdefault}{\mddefault}{\updefault}{\color[rgb]{0,0,0}$p_5$}%
}}}}
\put(10756,-8881){\makebox(0,0)[lb]{\smash{{\SetFigFont{12}{14.4}{\rmdefault}{\mddefault}{\updefault}{\color[rgb]{0,0,0}$p_6$}%
}}}}
\put(11521,-8881){\makebox(0,0)[lb]{\smash{{\SetFigFont{12}{14.4}{\rmdefault}{\mddefault}{\updefault}{\color[rgb]{0,0,0}$n=21$}%
}}}}
\put(7156,-8881){\makebox(0,0)[lb]{\smash{{\SetFigFont{12}{14.4}{\rmdefault}{\mddefault}{\updefault}{\color[rgb]{0,0,0}$1$}%
}}}}
\put(8101,-9196){\makebox(0,0)[lb]{\smash{{\SetFigFont{12}{14.4}{\rmdefault}{\mddefault}{\updefault}{\color[rgb]{0,0,0}$C_2$}%
}}}}
\put(10216,-9196){\makebox(0,0)[lb]{\smash{{\SetFigFont{12}{14.4}{\rmdefault}{\mddefault}{\updefault}{\color[rgb]{0,0,0}$C_6$}%
}}}}
\put(11476,-9196){\makebox(0,0)[lb]{\smash{{\SetFigFont{12}{14.4}{\rmdefault}{\mddefault}{\updefault}{\color[rgb]{0,0,0}$C_8$}%
}}}}
\put(9226,-9196){\makebox(0,0)[lb]{\smash{{\SetFigFont{12}{14.4}{\rmdefault}{\mddefault}{\updefault}{\color[rgb]{0,0,0}$C_4$}%
}}}}
\end{picture}%
\caption{The clutter and the supports of the vertices $\ExtPv$ and $\ExtPv'$ considered in~\tref{Example}{ExampleReferee}.}\label{figure1}
\end{center}
\end{figure} 

\begin{figure}
\begin{center}
\begin{picture}(0,0)%
\includegraphics{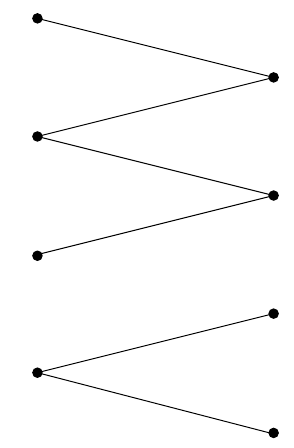}%
\end{picture}%
\setlength{\unitlength}{4144sp}%
\begingroup\makeatletter\ifx\SetFigFont\undefined%
\gdef\SetFigFont#1#2#3#4#5{%
  \reset@font\fontsize{#1}{#2pt}%
  \fontfamily{#3}\fontseries{#4}\fontshape{#5}%
  \selectfont}%
\fi\endgroup%
\begin{picture}(2235,3411)(8266,-8059)
\put(10486,-7981){\makebox(0,0)[lb]{\smash{{\SetFigFont{12}{14.4}{\rmdefault}{\mddefault}{\updefault}{\color[rgb]{0,0,0}$p'_6$}%
}}}}
\put(8281,-7531){\makebox(0,0)[lb]{\smash{{\SetFigFont{12}{14.4}{\rmdefault}{\mddefault}{\updefault}{\color[rgb]{0,0,0}$p_1$}%
}}}}
\put(8281,-6631){\makebox(0,0)[lb]{\smash{{\SetFigFont{12}{14.4}{\rmdefault}{\mddefault}{\updefault}{\color[rgb]{0,0,0}$p_3$}%
}}}}
\put(8281,-4831){\makebox(0,0)[lb]{\smash{{\SetFigFont{12}{14.4}{\rmdefault}{\mddefault}{\updefault}{\color[rgb]{0,0,0}$p_5$}%
}}}}
\put(8281,-5731){\makebox(0,0)[lb]{\smash{{\SetFigFont{12}{14.4}{\rmdefault}{\mddefault}{\updefault}{\color[rgb]{0,0,0}$p_4$}%
}}}}
\put(10486,-5281){\makebox(0,0)[lb]{\smash{{\SetFigFont{12}{14.4}{\rmdefault}{\mddefault}{\updefault}{\color[rgb]{0,0,0}$p'_4$}%
}}}}
\put(10486,-6181){\makebox(0,0)[lb]{\smash{{\SetFigFont{12}{14.4}{\rmdefault}{\mddefault}{\updefault}{\color[rgb]{0,0,0}$p'_3$}%
}}}}
\put(10486,-7081){\makebox(0,0)[lb]{\smash{{\SetFigFont{12}{14.4}{\rmdefault}{\mddefault}{\updefault}{\color[rgb]{0,0,0}$p'_1$}%
}}}}
\end{picture}%
\caption{The joint saturation graph $\G(\ExtPv,\ExtPv')$ of the vertices $\ExtPv$ and $\ExtPv'$ of~\tref{Example}{ExampleReferee}.}\label{figure2}
\end{center}
\end{figure} 

\begin{exam}\label{ExampleReferee} 
Let us consider the set covering polyhedron associated with the clutter $\cov=\{C_1,\ldots ,C_8\}$, where $C_1=\arc{1,4}$, $C_2=\arc{5,6}$, $C_3=\arc{6,9}$, $C_4=\arc{8,13}$, $C_5=\arc{11,15}$, $C_6=\arc{14,16}$, $C_7=\arc{17,18}$, $C_8=\arc{18,3}$ and $n=21$ (this clutter is represented in Figure~\ref{figure1}). The joint saturation graph $\G(\ExtPv,\ExtPv')$ of two vertices of this polyhedron is depicted in Figure~\ref{figure2} (the supports $\SuppPv=\{p_1,\ldots ,p_6\}$ and $\SuppPv'=\{p'_1,\ldots ,p'_6\}$ of these vertices are represented in Figure~\ref{figure1}). By properties~\refp{lem:nec:5:2} and~\refp{lem:nec:5:3} of \tref{Lemma}{lem:nec:5}, with each path of $\G(\ExtPv,\ExtPv')$ it is possible to associate a sequence of consecutive circular arcs of $\I$ such that in this sequence there is precisely one circular arc for each edge of the path and the endpoints of each circular arc are the nodes defining the corresponding edge of $\G(\ExtPv,\ExtPv')$ (here, we call $i$ and $j$ the {\em endpoints} of the circular arc $\arc{i, j}$ of $\I$, and we say that two circular arcs are {\em consecutive} if their intersection is one of their endpoints). Besides, each circular arc of this sequence has the property that only its endpoints belong to $\SuppPv\cup \SuppPv'$ (one of the endpoints belongs to $\SuppPv\setminus \SuppPv'$ and the other one to $\SuppPv' \setminus \SuppPv$). For the path $p_3$, $p'_3$, $p_4$, $p'_4$, $p_5$ of Figure~\ref{figure2}, the associated sequence of circular arcs is $\arc{p_3 ,p'_3}$, $\arc{p'_3, p_4}$, $\arc{p_4, p'_4}$, $\arc{p'_4, p_5}$, and for the path $p'_6$, $p_1$, $p'_1$, the associated sequence of circular arcs is $\arc{p'_6, p_1}$, $\arc{p_1, p'_1}$ (see Figure~\ref{figure1}). We refer the reader to~\tref{Example}{ExampleDifTypes} below for more examples. 
\end{exam}

\begin{lem}
\label{lem:nec:6}
Let $\ExtPv$ and $\ExtPv'$ be distinct vertices of $\Q(A)$.
Suppose $p_i$ and $p_{i+1}$ are in the same component $F$ of the joint saturation graph $\G(\ExtPv,\ExtPv')$, but do not have a common neighbor in $\SuppPv' \setminus \SuppPv$.
Then, $\SuppPv \subset F$, and so in particular $\SuppPv \cap \SuppPv' = \emptyset$.
\end{lem}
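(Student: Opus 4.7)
The plan is to split on the structure of the component $F$ provided by~\trrefp{Lemma}{lem:nec:5}{lem:nec:5:4}. If $F$ is a cycle, then~\trrefp{Lemma}{lem:nec:5}{lem:nec:5:5} gives the conclusion immediately: the node set of $F$ equals $\SuppPv\cup \SuppPv'$, so $\SuppPv\subset F$ and $\SuppPv\cap \SuppPv'=\emptyset$.

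Assume then that $F$ is a path. Observe first that $p_i$ and $p_{i+1}$ lie in the same partite class $\SuppPv\setminus \SuppPv'$, so any path joining them in the bipartite graph $\G(\ExtPv,\ExtPv')$ has even length. The key input is the structural description derived inside the proof of~\trrefp{Lemma}{lem:nec:5}{lem:nec:5:5} (and illustrated in~\tref{Example}{ExampleReferee}): any simple path of $\G(\ExtPv,\ExtPv')$ joining two nodes of $\SuppPv\setminus \SuppPv'$ necessarily visits consecutive elements of $\SuppPv$ and advances in a single direction along the cyclic order. Applied to a path in $F$ from $p_i$ to $p_{i+1}$, only two scenarios are possible: either the $v$-indices advance by $+1$, yielding a length-two path $p_i,\, p'_j,\, p_{i+1}$, or they advance by $-1$, forcing the path to traverse $p_i,\, p_{i-1},\, p_{i-2},\, \dots,\, p_{i+2},\, p_{i+1}$ the ``long way around'' and thereby sweep through every element of $\SuppPv$.

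The no-common-neighbor hypothesis rules out the first scenario: a length-two path $p_i,\, p'_j,\, p_{i+1}$ would exhibit $p'_j$ as a common neighbor of $p_i$ and $p_{i+1}$ in $\SuppPv'\setminus\SuppPv$. Hence the second scenario must hold, so every $p_k\in\SuppPv$ is a node of $F$, i.e.\ $\SuppPv\subset F$. Since every node of $\G(\ExtPv,\ExtPv')$ belongs to the symmetric difference $\SuppPv\symdif \SuppPv'$, the inclusion $\SuppPv\subset F$ forces $\SuppPv\cap \SuppPv'=\emptyset$.

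The main obstacle is justifying the ``single-direction cyclic advance'' cleanly. This is really the content of \trrefp{Lemma}{lem:nec:5}{lem:nec:5:2} together with \trrefp{Lemma}{lem:nec:5}{lem:nec:5:1}: since each node of the path has degree at most $2$ and the two $\SuppPv$-neighbors of any $p'_j$ (if both exist) differ by exactly one in cyclic index, a direction change mid-path would produce either a repeated node or a vertex of degree greater than two. Once this rigidity is in hand, the path from $p_i$ to $p_{i+1}$ is completely determined up to direction, and the bookkeeping above finishes the argument.
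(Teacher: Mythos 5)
Your proposal is correct and follows essentially the same route as the paper: the paper's proof is a one-line appeal to parts~\refp{lem:nec:5:2} and~\refp{lem:nec:5:3} of \tref{Lemma}{lem:nec:5} to conclude that the path from $p_i$ to $p_{i+1}$ must sweep through all of $\SuppPv$, which is exactly the ``single-direction cyclic advance'' argument you spell out (your extra handling of the cycle case via~\refp{lem:nec:5:5} and the explicit degree-two/no-repeated-node justification are just more detailed versions of what the paper leaves implicit). No gaps.
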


\begin{proof}
By properties~\refp{lem:nec:5:2} and~\refp{lem:nec:5:3} of \tref{Lemma}{lem:nec:5}, the path from $p_i$ to $p_{i+1}$ must contain all the elements of $\SuppPv$, and so these must be in $\SuppPv\setminus \SuppPv'$.
\end{proof}

\begin{lem}
\label{lem:nec:3}
Let $\ExtPv$ and $\ExtPv'$ be distinct vertices of $\Q(A)$. Suppose the inclusion $\arc{p'_j,p'_{j+1}}\subset \arc{p_i,p_{i+1}}$ is satisfied. Then, if $p_i \ngh p'_{j-1}$, we must have $p_i\ngh p'_j$. Similarly, if $p_{i+1}\ngh p'_{j+2}$ then $p_{i+1}\ngh p'_{j+1}$.
\end{lem}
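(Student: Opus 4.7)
The plan is to prove the first implication directly; the second follows by an entirely symmetric argument (swapping the forward and backward directions along the circle, with $p_{i+1}$ and $p'_{j+1}$ playing the roles of $p_i$ and $p'_j$). Concretely, I would exhibit a single clutter element witnessing $p_i \ngh p'_j$. By the minimality of the transversal $\SuppPv'$ there exists $C\in \cov$ with $C\cap \SuppPv'=\{p'_j\}$; the whole task is then to show that under the two hypotheses one is forced to have $C\cap \SuppPv=\{p_i\}$ as well. Since $C$ is a circular arc containing $p'_j$ but neither $p'_{j-1}$ nor $p'_{j+1}$, it is contained in $\arc{p'_{j-1}+1,\, p'_{j+1}-1}$.

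First I would pin down the location of $p'_{j-1}$ relative to $\arc{p_i, p_{i+1}}$. Because $p'_j, p'_{j+1}\in \SuppPv'\setminus \SuppPv$, the inclusion hypothesis places both of them strictly inside $\arc{p_i, p_{i+1}}$. If $p'_{j-1}$ were strictly between $p_i$ and $p'_j$ in the forward direction along $\arc{p_i, p_{i+1}}$, then $\arc{p'_{j-1}+1,\, p'_{j+1}-1}$ would lie entirely in the interior of $\arc{p_i, p_{i+1}}$, which is free of $\SuppPv$-elements since $p_i$ and $p_{i+1}$ are consecutive in $\SuppPv$; then $C\cap \SuppPv=\emptyset$, contradicting the transversality of $\SuppPv$. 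So $p'_{j-1}$ must lie strictly inside the complementary arc $\arc{p'_j, p_i}$.

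Next I would exploit the hypothesis $p_i \ngh p'_{j-1}$ via~\tref{Remark}{rem:graph:2}: exactly one of the arcs $\arc{p_i, p'_{j-1}}$ and $\arc{p'_{j-1}, p_i}$ meets $\SuppPv\cup \SuppPv'$ only in $\{p_i, p'_{j-1}\}$. With the position of $p'_{j-1}$ now known, the forward arc $\arc{p_i, p'_{j-1}}$ successively contains $p'_j$, $p'_{j+1}$ and $p_{i+1}$, so it cannot be the ``gap arc''; hence $\arc{p'_{j-1}, p_i}$ has no element of $\SuppPv\cup \SuppPv'$ in its interior. In particular, no $p_k$ with $k\ne i$ lies strictly between $p'_{j-1}$ and $p_i$.

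Finally I would combine these observations. The arc $C$ must meet $\SuppPv$ (transversality), and inside $\arc{p'_{j-1}+1,\, p'_{j+1}-1}$ the only reachable $\SuppPv$-element is $p_i$: positions forward of $p'_j$ lie strictly between the consecutive points $p_i$ and $p_{i+1}$, while positions backward of $p_i$ lie in the interior of $\arc{p'_{j-1}, p_i}$, which was just shown to contain no other support points. Therefore $C\cap \SuppPv=\{p_i\}$, and $C$ witnesses $p_i \ngh p'_j$. The step I expect to be the main obstacle is the first one: without the transversality argument ruling out $p'_{j-1}$ between $p_i$ and $p'_j$, the hypothesis $p_i \ngh p'_{j-1}$ cannot be cleanly parlayed through~\tref{Remark}{rem:graph:2} into the needed exclusion of $p_{i-1}$ from $C$.
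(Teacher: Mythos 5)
Your proof is correct and follows essentially the same route as the paper's: fix $C\in\cov$ with $C\cap\SuppPv'=\{p'_j\}$, rule out $p'_{j-1}\in\arc{p_i,p'_j}$ by the transversality of $\SuppPv$, and then use \tref{Remark}{rem:graph:2} on the edge $p_i\ngh p'_{j-1}$ to conclude that $C$ can meet $\SuppPv$ only at $p_i$. The only cosmetic difference is that the paper first establishes $p'_{j-1}\neq p'_{j+1}$ (by showing $p_i\nngh p'_{j+1}$) before running the transversality argument, a degenerate possibility that your dichotomy absorbs implicitly.
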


\begin{proof}
Assume $p_i \ngh p'_{j-1}$. As $\arc{p'_j,p'_{j+1}}\subset \arc{p_i,p_{i+1}}$, observe that if $C_t\in\cov$ is such that $\{p_i,p'_{j+1}\} \subset C_t$, then $\{p_{i+1},p'_{j}\} \cap C_t\neq \emptyset$. Thus, we conclude that $p_i \nngh p'_{j+1}$, and so $p'_{j-1}\neq p'_{j+1}$.

As $p'_{j-1}\neq p'_{j+1}$, we must have $p'_{j-1}\not \in \arc{p_i,p'_j}$, because otherwise we would have $\arc{p'_{j-1},p'_{j+1}}\subset \arc{p_i,p_{i+1}}$, and so $C_t\cap \SuppPv=\emptyset$ for any $C_t\in\cov$ satisfying $C_t\cap \SuppPv' = \{p'_j\}$ (at least one of such $C_t$ exists), contradicting the fact that $\SuppPv$ is a transversal. Since $p_i$ is different from $p'_{j-1}$ and $p'_j$ (because $p_i \ngh p'_{j-1}$, and so $p_i\in \SuppPv \setminus \SuppPv'$), observe that $p'_{j-1}\not \in \arc{p_i,p'_j}$ is equivalent to $p'_j\not \in \arc{p'_{j-1},p_i}$.

As $p_i \ngh p'_{j-1}$ and $p'_j\not \in \arc{p'_{j-1},p_i}$, we have $\arc{p'_{j-1},p_i}\cap \SuppPv =\{p_i\}$ and $\arc{p'_{j-1},p_i}\cap \SuppPv' =\{p'_{j-1}\}$ (see \tref{Remark}{rem:graph:2}). Thus, we conclude that $\arc{p'_{j-1},p'_j}\cap \SuppPv =  \{p_i\}$ and $\arc{p'_j,p'_{j+1}-1}\cap \SuppPv =\emptyset$ because  $\arc{p'_j,p'_{j+1}-1}\subset \arc{p_i+1,p_{i+1}-1}$. Then, if $C_t\in\cov$ is such that $C_t\cap \SuppPv' = \{p'_j\}$, we must have $C_t\cap \SuppPv = \{p_i\}$ because $C_t$ is a circular arc and it must intersect $\SuppPv$. Therefore, recalling \tref{Definition}{defn:G}, we have $p_i\ngh p'_j$.\qedhere
\end{proof}

\begin{lem}
\label{lem:nec:7}
Let $\ExtPv$ and $\ExtPv'$ be distinct vertices of $\Q(A)$. If $\SuppPv$ is contained in a component of the joint saturation graph $\G(\ExtPv,\ExtPv')$, then $\G(\ExtPv,\ExtPv')$ is either connected or almost-connected. Moreover, in the latter case, we have $\card{\SuppPv} = \card{\SuppPv'}$.
\end{lem}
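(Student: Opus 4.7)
The plan is to combine the structural description of the components of $\G(\ExtPv,\ExtPv')$ given by \tref{Lemma}{lem:nec:5} with an application of \tref{Lemma}{lem:nec:4} where the roles of $\ExtPv$ and $\ExtPv'$ are interchanged. Since the node set of $\G(\ExtPv,\ExtPv')$ is $\SuppPv\symdif\SuppPv'$, the hypothesis $\SuppPv\subset F$ forces $\SuppPv\cap\SuppPv'=\emptyset$. Writing $X=\SuppPv\setminus\SuppPv'=\SuppPv$ and $Y=\SuppPv'\setminus\SuppPv=\SuppPv'$, the claim $|\SuppPv|=|\SuppPv'|$ reduces to $|X|=|Y|$. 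By \trrefp{Lemma}{lem:nec:5}{lem:nec:5:4}, $F$ is either a cycle or a path; the cycle case is already handled by \trrefp{Lemma}{lem:nec:5}{lem:nec:5:5}, so the substantive task is the path case.

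Assume then that $F$ is a path. Using the monotone cyclic structure from the proof of \trrefp{Lemma}{lem:nec:5}{lem:nec:5:5} (specifically \eqref{IntervalPath}), the $X$-nodes of $F$ exhaust $\SuppPv=\{p_1,\dots,p_r\}$ and occupy consecutive cyclic positions from some $p_i$ to $p_{i-1}$, while the arc $\arc{u_1,u_m}$ spanned by $F$ contains no further elements of $\SuppPv\cup\SuppPv'$. Consequently, any node of $\G(\ExtPv,\ExtPv')$ outside $F$ lies in the complement arc, is contained in the ``wrap-around gap'' $\arc{p_{i-1},p_i}$, belongs to $Y$ (because $\SuppPv\subset F$), and is isolated in $\G(\ExtPv,\ExtPv')$ by bipartiteness.

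The core step is to bound the number of such isolated nodes by one and to locate the endpoints of $F$. For each isolated $y$ in the wrap-around gap, applying \tref{Lemma}{lem:nec:4} with the roles of $\ExtPv$ and $\ExtPv'$ swapped yields $C\in\cov$ with $C\cap\SuppPv'=\{y\}$ and $C\cap\SuppPv=\{p_a,p_{a+1}\}$ for two cyclically consecutive elements of $\SuppPv$; being a circular arc through $p_a$ and $p_{a+1}$ meeting $\SuppPv$ only there, $C$ contains the short arc $\arc{p_a,p_{a+1}}$. The key obstacle is a case analysis on the pair $(p_a,p_{a+1})$: by \eqref{IntervalPath}, for any such pair other than $(p_{i-1},p_i)$ the short arc $\arc{p_a,p_{a+1}}$ is traversed by an edge of the path $F$ and therefore contains a $Y$-node of $F$; this node together with $y$ would give $|C\cap\SuppPv'|\geq 2$, a contradiction. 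Hence $(p_a,p_{a+1})=(p_{i-1},p_i)$ and $C\supset\arc{p_{i-1},p_i}$, so the condition $|C\cap\SuppPv'|=1$ simultaneously shows that the wrap-around gap contains only the single $Y$-node $y$ (so at most one isolated $Y$-node exists) and that $F$ has no $Y$-endpoint inside the wrap-around gap (otherwise such an endpoint would add a second element to $C\cap\SuppPv'$), i.e., both endpoints of $F$ lie in $X$.

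Putting everything together: if no isolated $Y$-node exists then $\G(\ExtPv,\ExtPv')$ is connected; if exactly one exists then $\G(\ExtPv,\ExtPv')$ has precisely two components — $F$ and the isolated node — hence it is almost-connected. In the latter case, $F$ is a path with both endpoints in $X$, which forces $|F\cap Y|=r-1$, and therefore $|Y|=(r-1)+1=r=|X|$, giving $|\SuppPv|=|\SuppPv'|$ as required.
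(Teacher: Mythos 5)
Your proof is correct, and while it shares the paper's overall skeleton---reduce to the case where the component $F$ containing $\SuppPv$ is a path whose $\SuppPv$-nodes run cyclically through all of $\SuppPv$, isolate the single untraversed ``gap'' arc, and show this gap holds at most one element of $\SuppPv'$, necessarily isolated---the key uniqueness step is carried out by a genuinely different mechanism. The paper bounds the number of elements of $\SuppPv'$ in the gap arc $\arc{p_i,p_{i+1}}$ by $2$ directly (three would sandwich one away from $\SuppPv$, violating transversality), rules out $0$ since $\G(\ExtPv,\ExtPv')$ would then be connected, and rules out $2$ by invoking \tref{Lemma}{lem:nec:3} to produce the edges $p_i\ngh p'_j$ and $p_{i+1}\ngh p'_{j+1}$, again forcing connectedness; exactly one isolated element remains. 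You instead start from an arbitrary isolated node $y$, take a witness arc $C$ with $C\cap\SuppPv'=\{y\}$, and use \tref{Lemma}{lem:nec:4} with the roles of $\ExtPv$ and $\ExtPv'$ swapped to force $C$ to contain the gap arc, from which both the uniqueness of $y$ and the fact that the endpoints of $F$ lie in $\SuppPv$ drop out simultaneously; the latter gives you the count $\card{\SuppPv'}=\card{\SuppPv}$ directly, where the paper gets it from its path explicitly running from $p_{i+1}$ to $p_i$. Two spots where you are terse but not wrong: the claim that a $\SuppPv'$-endpoint of $F$ would necessarily sit inside the wrap-around gap deserves a sentence (it follows from \tref{Remark}{rem:graph:2} applied to the terminal edge, since the arc spanned by $F$ meets $\SuppPv\cup\SuppPv'$ only in nodes of $F$); and when $\card{\SuppPv}=2$ the phrase ``the short arc $\arc{p_a,p_{a+1}}$'' is ambiguous, though your subsequent contradiction (the traversed arc contains a $\SuppPv'$-node of $F$) disposes of the wrong choice anyway.
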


\begin{proof}
Suppose $\G(\ExtPv,\ExtPv')$ is not connected. Then, if $F$ is the component containing $\SuppPv$, by \tref{Lemma}{lem:nec:5} we conclude that $F$ must be a path connecting two consecutive elements of $\SuppPv$, say $p_i$ and $p_{i+1}$, and these elements do not have a common neighbor in $\arc{p_i, p_{i+1}}$.

Observe that $\arc{p_i, p_{i+1}}$ cannot contain more than two elements of $\SuppPv'$. Indeed, assuming the contrary we would have
$\arc{p'_{j-1},p'_{j+1}}\subset \arc{p_i, p_{i+1}}$ for some $j$ ($p'_{j-1}\neq p'_{j+1}$), and so there would exist $C_t\in\cov$ such that $C_t\cap \SuppPv=\emptyset$ (this would hold for any $C_t\in\cov$ such that $C_t\cap \SuppPv'=\{p'_j \}$), contradicting that $\SuppPv$ is a transversal.

If $\arc{p_i, p_{i+1}}$ did not contain elements of $\SuppPv'$, by \tref{Lemma}{lem:nec:5} we would conclude that $\G(\ExtPv,\ExtPv')$ is the path $F$, contradicting our assumption that
$\G(\ExtPv,\ExtPv')$ is not connected.

If there were two elements, say $p'_j$ and $p'_{j+1}$,
\tref{Lemma}{lem:nec:3} would show that $p_i\ngh p'_{j}$ and $p_{i+1}\ngh p'_{j+1}$, contradicting again that $\G(\ExtPv,\ExtPv')$ is not connected.

Thus, there can only be exactly one element of $\SuppPv'$ in $\arc{p_i, p_{i+1}}$, say $p'_j$, and this element cannot belong to $F$ since we assume that $\G(\ExtPv,\ExtPv')$ is not connected.
Therefore, $\G(\ExtPv,\ExtPv')$ consists of the isolated node $p'_j$ and the path $F$ connecting $p_i$ with $p_{i+1}$, and so it is almost-connected and $\card{\SuppPv} = \card{\SuppPv'}$.
\end{proof}

Before proving a characterization of vertex adjacency for $\Q(A)$ when $A\in\BB^{m\times n}$ is row circular, we note that with this aim we can restrict our analysis to the case where $A$ has at most three ones per row, and the vertices $\ExtPv$ and $\ExtPv'$ of $\Q(A)$ satisfy $\SuppPv \cup \SuppPv'=\I$ (in this case, observe that if $p_i \ngh p'_j$, then we must have either $p'_{j}=p_i-1$ or $p'_{j}=p_i+1$). This follows from \tref{Remark}{rem:contraction} and the fact that for a row circular matrix $A$, the contraction minor $A/I$ of \tref{Remark}{rem:contraction} has at most three ones per row, as shown in the next lemma.

\begin{lem}\label{Lemma:PropContrac}
Let $\ExtPv$ and $\ExtPv'$ be distinct vertices of $\Q(A)$, and let $I=\I\setminus (\SuppPv \cup \SuppPv')$. Then,  each row of the contraction minor $A/I$ has at most three ones.
\end{lem}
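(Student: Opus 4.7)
My plan is to apply Lemma~\ref{lem:nec:1} to both minimal transversals $\SuppPv$ and $\SuppPv'$. This immediately yields $|C_t\cap\SuppPv|\le 2$ and $|C_t\cap\SuppPv'|\le 2$ for every $C_t\in\cov$, so $|C_t\cap(\SuppPv\cup\SuppPv')|\le 4$; when this upper bound is attained, necessarily $|C_t\cap\SuppPv|=|C_t\cap\SuppPv'|=2$ with the two intersections disjoint, and the ``consecutive'' conclusion of Lemma~\ref{lem:nec:1} forces $C_t\cap\SuppPv=\{p_i,p_{i+1}\}$ and $C_t\cap\SuppPv'=\{p'_j,p'_{j+1}\}$ for appropriate indices. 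To prove the lemma it therefore suffices to show that no row of this form survives the dominating-row removal in $A/I$.

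For this, I first observe that $A/I$ is itself row circular in the cyclic order inherited on $\SuppPv\cup\SuppPv'$, has no dominating rows by construction, and that both $\SuppPv$ and $\SuppPv'$ remain minimal transversals of its clutter: any $C\in\cov$ realising $C\cap\SuppPv=\{p\}$ restricts to a row of $A/I$ with the same intersection, since any hypothetical dominator would have empty intersection with the transversal $\SuppPv$; similarly for $\SuppPv'$. Hence Lemma~\ref{lem:nec:1} applies again to $A/I$ with either transversal. A hypothetical undominated row $C'$ of $A/I$ with four ones is then a length-four arc in the $\SuppPv\cup\SuppPv'$-cyclic order comprising two $\SuppPv$- and two $\SuppPv'$-elements, and up to swapping the roles of $\ExtPv$ and $\ExtPv'$ its cyclic pattern is one of (A) $p_i,p_{i+1},p'_j,p'_{j+1}$; (B) $p_i,p'_j,p_{i+1},p'_{j+1}$; (C) $p_i,p'_j,p'_{j+1},p_{i+1}$.

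In each case I will produce a row of $A/I$ whose support is a strict subset of $\{p_i,p_{i+1},p'_j,p'_{j+1}\}$, contradicting the undominance of $C'$. In cases (A) and (B) I use minimality of $\SuppPv$ in $A/I$ to obtain a row $G$ with $G\cap\SuppPv=\{p_{i+1}\}$: backward, $G$ is blocked either immediately by $p_i$ (case (A)) or after absorbing $p'_j$ (case (B)); forward, the combination of ``no further $\SuppPv$-element before $p_{i+2}$'' and $|G\cap\SuppPv'|\le 2$ from Lemma~\ref{lem:nec:1} forces $G\subseteq\{p_{i+1},p'_j,p'_{j+1}\}$ and $G\subseteq\{p'_j,p_{i+1},p'_{j+1}\}$ respectively, each properly contained in $C'$. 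The main obstacle is case (C), where the two $\SuppPv$-elements bracket the two $\SuppPv'$-elements and a naive witness for $p_{i+1}$ may step outside $C'$; the remedy is a short sub-case analysis on whether the $\SuppPv\cup\SuppPv'$-cyclic neighbours of $C'$ (immediately before $p_i$ and immediately after $p_{i+1}$) lie in $\SuppPv$ or in $\SuppPv'$. When one of them is in $\SuppPv$, the minimality witness for the adjacent element of $\{p_i,p_{i+1}\}$ is blocked on that side and, by Lemma~\ref{lem:nec:1}, constrained inside $\{p_i,p'_j,p'_{j+1}\}$ or $\{p'_j,p'_{j+1},p_{i+1}\}$; when both neighbours are in $\SuppPv'$, the minimality witness for $p'_j$ cannot pass $p'_{j+1}$ forward nor the external $\SuppPv'$-neighbour backward, so its support is forced to be $\{p_i,p'_j\}$. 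In every sub-case we obtain the desired strictly dominating row of $A/I$.
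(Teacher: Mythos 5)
Your overall strategy is the same as the paper's: assume a row of $A/I$ has four ones, use \tref{Lemma}{lem:nec:1} to see that it consists of two consecutive elements of $\SuppPv$ and two consecutive elements of $\SuppPv'$, and then exhibit another row of $A/I$ properly contained in it, contradicting the absence of dominating rows. Cases (A) and (C) of your pattern analysis are sound. But case (B), the alternating pattern $p_i,p'_j,p_{i+1},p'_{j+1}$, has a genuine gap. Your witness $G$ with $G\cap\SuppPv=\{p_{i+1}\}$ is indeed an arc excluding $p_i$ and $p_{i+2}$, but the claimed conclusion $G\subseteq\{p'_j,p_{i+1},p'_{j+1}\}$ does not follow from ``$\Card{G\cap\SuppPv'}\le 2$'': nothing forces $G$ to extend backward to $p'_j$, so it may instead extend forward past $p'_{j+1}$ to a further element $p'_{j+2}\in\SuppPv'$ lying strictly between $p'_{j+1}$ and $p_{i+2}$ (two elements of $\SuppPv'$ may legitimately sit strictly between consecutive elements of $\SuppPv$; transversality only excludes three). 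Then $G=\{p_{i+1},p'_{j+1},p'_{j+2}\}$ satisfies every constraint you impose yet is not contained in $C'$, and the domination argument collapses. The gap is repairable by the same kind of sub-case analysis you perform in case (C): if the element of $\SuppPv\cup\SuppPv'$ following $p'_{j+1}$ is $p_{i+2}$ your argument goes through, and otherwise the minimality witness at $p'_{j+1}$ is squeezed into $\{p_{i+1},p'_{j+1}\}\subsetneq C'$. As written, however, case (B) is not proved.

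For comparison, the paper avoids this trap by ordering its two witnesses differently: it first takes the witness $C_s$ at the \emph{second} element $i$ of the four-element arc $\{h,i,j,k\}$, shows that $C_s$ must reach backward past the first element $h$ to an element $l$ outside $C_t$, and deduces both that $h$ lies in the other support and that no element of $\SuppPv\cup\SuppPv'$ lies strictly between $l$ and $h$; only then does it take the witness at $h$, which is now trapped between $l$ and $k$ and cannot escape on either side. This renders the pattern casework unnecessary. Two smaller points: your justification that $\SuppPv$ remains a minimal transversal of the clutter of $A/I$ (``any hypothetical dominator would have empty intersection with $\SuppPv$'') is not the right reason --- if the restriction $\bar C$ of a witness row is removed, it is because it properly contains some surviving row $\bar D$ of $A/I$, and then $\emptyset\ne\bar D\cap\SuppPv\subseteq\bar C\cap\SuppPv=\{p\}$ still yields the desired witness --- though the fact itself is true; and applying \tref{Lemma}{lem:nec:1} to rows of $A/I$ is legitimate but is more cleanly done by applying it to the corresponding rows of $A$, whose intersections with $\SuppPv$ and $\SuppPv'$ are unchanged by the contraction.
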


\begin{proof}
Let us denote by $\bar{C}_t$ the support of the $t$-th row of $A/I$. 

For the sake of simplicity, in this proof we will assume that the $t$-th row of $A/I$ corresponds to the $t$-th row of $A$ (recall that when contracting, dominating rows are eliminated, so this does not necessarily hold, but it can be assumed without loss of generality).

Since by \tref{Lemma}{lem:nec:1} each circular arc $C_t\in\cov$ can contain at most two elements of the support of each vertex of $\Q(A)$,
after the contraction, the resulting arc $\bar{C}_t$ can contain at most four elements, two of them corresponding to elements of $\SuppPv$, and two corresponding to elements of $\SuppPv'$.

\begin{figure}
\begin{center}
\begin{picture}(0,0)%
\includegraphics{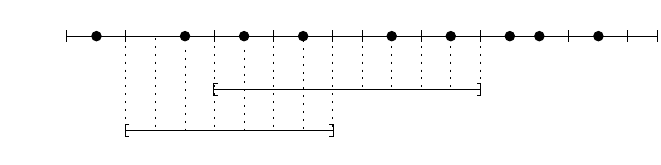}%
\end{picture}%
\setlength{\unitlength}{4144sp}%
\begingroup\makeatletter\ifx\SetFigFont\undefined%
\gdef\SetFigFont#1#2#3#4#5{%
  \reset@font\fontsize{#1}{#2pt}%
  \fontfamily{#3}\fontseries{#4}\fontshape{#5}%
  \selectfont}%
\fi\endgroup%
\begin{picture}(5022,1251)(6691,-9589)
\put(6706,-8686){\makebox(0,0)[lb]{\smash{{\SetFigFont{12}{14.4}{\rmdefault}{\mddefault}{\updefault}{\color[rgb]{0,0,0}$\I$}%
}}}}
\put(7156,-8521){\makebox(0,0)[lb]{\smash{{\SetFigFont{12}{14.4}{\rmdefault}{\mddefault}{\updefault}{\color[rgb]{0,0,0}$1$}%
}}}}
\put(8056,-8521){\makebox(0,0)[lb]{\smash{{\SetFigFont{12}{14.4}{\rmdefault}{\mddefault}{\updefault}{\color[rgb]{0,0,0}$l$}%
}}}}
\put(8506,-8521){\makebox(0,0)[lb]{\smash{{\SetFigFont{12}{14.4}{\rmdefault}{\mddefault}{\updefault}{\color[rgb]{0,0,0}$h$}%
}}}}
\put(8956,-8521){\makebox(0,0)[lb]{\smash{{\SetFigFont{12}{14.4}{\rmdefault}{\mddefault}{\updefault}{\color[rgb]{0,0,0}$i$}%
}}}}
\put(9631,-8521){\makebox(0,0)[lb]{\smash{{\SetFigFont{12}{14.4}{\rmdefault}{\mddefault}{\updefault}{\color[rgb]{0,0,0}$j$}%
}}}}
\put(10081,-8521){\makebox(0,0)[lb]{\smash{{\SetFigFont{12}{14.4}{\rmdefault}{\mddefault}{\updefault}{\color[rgb]{0,0,0}$k$}%
}}}}
\put(11656,-8521){\makebox(0,0)[lb]{\smash{{\SetFigFont{12}{14.4}{\rmdefault}{\mddefault}{\updefault}{\color[rgb]{0,0,0}$n$}%
}}}}
\put(9271,-9196){\makebox(0,0)[lb]{\smash{{\SetFigFont{12}{14.4}{\rmdefault}{\mddefault}{\updefault}{\color[rgb]{0,0,0}$C_t$}%
}}}}
\put(8326,-9511){\makebox(0,0)[lb]{\smash{{\SetFigFont{12}{14.4}{\rmdefault}{\mddefault}{\updefault}{\color[rgb]{0,0,0}$C_s$}%
}}}}
\end{picture}%
\caption{Illustration of the proof of~\tref{Lemma}{Lemma:PropContrac} (the elements of $\SuppPv \cup \SuppPv'$ are represented by dots).}\label{figure3}
\end{center}
\end{figure} 

Assume that an arc $\bar{C}_t$ with four elements exists,
and let $h$, $i$, $j$ and $k$ be the corresponding elements of $\SuppPv \cup \SuppPv'$. Without loss of generality, suppose that $\arc{i,j}\subset \arc{h,k}\subset C_t$, and that $i\in\SuppPv$ (see 
Figure~\ref{figure3} for an illustration of the relative positions of the main elements that appear in this proof). 

Let $C_s\in\cov$ be such that $C_s\cap\SuppPv = \{i\}$. Note that $(C_s \setminus C_t)\cap \SuppPv'$ cannot be empty because $C_s\cap\SuppPv = \{i\}\subsetneq C_t\cap\SuppPv$ and $A/I$ has no dominating rows. So, let $l$ be an element of $(C_s \setminus C_t)\cap \SuppPv'$. As $C_s$ is a circular arc and $\{l,i\}\subset C_s$, we either have $\{l,h,i\}\subset \arc{l,i} \subset C_s$ or $\{i,j,k,l\}\subset \arc{i,l} \subset C_s$ (recall that $\arc{i,j}\subset \arc{h,k}$ and $l\not \in C_t \supset \arc{h,k}$). Then, by the choice of $C_s$ ($i$ is the only element of $\SuppPv$ which belongs to $C_s$), we either have $\{l,h\}\subset C_s \cap \SuppPv'$ or $\{j,k,l\}\subset C_s \cap \SuppPv'$. It follows that $\{l,h\}\subset C_s \cap \SuppPv'$, because $\{j,k,l\}\subset C_s \cap \SuppPv'$ contradicts~\tref{Lemma}{lem:nec:1}. 
We conclude that $\{l,h,i\}\subset \arc{l,i} \subset C_s$ and $h\in \SuppPv'$.

Now, let $C_r\in\cov$ be such that $C_r\cap\SuppPv' = \{h\}$. Note that $l\not \in C_r$, because $l\neq h$ and $l\in \SuppPv'$ by the definition of $l$ (see the previous paragraph). Besides, since $C_r$ is a circular arc and $l\not \in C_r$, we have $k\not \in C_r$. Indeed, otherwise (i.e., if $k\in C_r$) we would have $\{h,i,j,k\} \subset C_r$ (recall that $\arc{i,j} \subset \arc{h,k}$, $h\in C_r$ and $l\not \in \arc{h,k}$), and then  by the choice of $C_r$ ($h$ is the only element of $\SuppPv'$ in $C_r$) we could conclude that $\{i,j,k\} \subset C_r \cap \SuppPv$, contradicting \tref{Lemma}{lem:nec:1}. 

Note that $\arc{l,k} \subset C_s\cup C_t$ because $h \in \arc{l,i} \subset C_s$ and $\arc{h,k}\subset C_t$. Besides, we have $\arc{l,k} \cap (\SuppPv \cup \SuppPv')=\{l,h,i,j,k\}$. Indeed, $\{l,h,i,j,k\} \subset \arc{l,k} \cap (\SuppPv \cup \SuppPv')$ due to the fact that $l\not \in C_t \supset \arc{h,k} \supset \arc{i,j}$, and if this intersection contained another element, this element should belong to $C_s$ (by our assumption we know that $C_t\cap (\SuppPv \cup \SuppPv')=\{h,i,j,k\}$), and then also to $\SuppPv'$ ($i$ is the only element of $\SuppPv$ which belongs to $C_s$), which together with the fact that $\{l,h\}\subset C_s \cap \SuppPv'$ would contradict \tref{Lemma}{lem:nec:1}. Finally,  observe that $C_r\subset \arc{l,k}$ because $C_r$ is a circular arc which contains $h$ but, by the previous paragraph, does not contain $l$ nor $k$. Thus, we conclude that $C_r\cap (\SuppPv \cup \SuppPv')\subset \{h,i,j\} \subsetneq \{h,i,j,k\} = C_t\cap (\SuppPv \cup \SuppPv')$, which contradicts the fact that $A/I$ has no dominating rows. 
\end{proof}

We need finally the next lemma to prove a characterization of vertex adjacency for $\Q(A)$.

\begin{lem}\label{lem:nec:noaristas}
Two vertices $\ExtPv$ and $\ExtPv'$ of $\Q(A)$ are not adjacent in $\Q(A)$ if their joint saturation graph $\G(\ExtPv,\ExtPv')$ has no edges.
\end{lem}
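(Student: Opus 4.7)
The plan is to apply \tref{Proposition}{propo:adys:2}. Since $\G(\ExtPv,\ExtPv')$ has no edges, every $p\in\SuppPv\setminus\SuppPv'$ is isolated; by \tref{Lemma}{lem:nec:4} each row $C\in\cov$ with $C\cap\SuppPv=\{p\}$ satisfies $|C\cap\SuppPv'|=2$ and $C\cap\SuppPv'\subseteq\SuppPv'\setminus\SuppPv$.

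First I would fix such a $p=p_i$ and prove the structural claim: $\SuppPv'$ has exactly two elements in the window $\arc{p_{i-1}+1,p_{i+1}-1}$ around $p$, namely the cyclically nearest $\SuppPv'$-elements $a$ and $b$ to $p$ on the two adjacent $\SuppPv$-gaps; consequently every row $C\in\cov$ with $C\cap\SuppPv=\{p\}$ satisfies $C\cap\SuppPv'=\{a,b\}$. The argument uses essentiality: for any $q\in\SuppPv'\setminus\SuppPv$, minimality of $\SuppPv'$ yields a row $C_q\in\cov$ with $C_q\cap\SuppPv'=\{q\}$, and the no-edges hypothesis forces $|C_q\cap\SuppPv|=2$; \tref{Lemma}{lem:nec:1} together with row circularity then requires those two $\SuppPv$-elements to be the bounds of the $\SuppPv$-gap containing $q$, and any second element of $\SuppPv'\setminus\SuppPv$ in that same gap would be forced into $C_q$, contradicting $C_q\cap\SuppPv'=\{q\}$. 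Applied to the two $\SuppPv$-gaps bordering $p$, this yields the claim.

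Next I would set $d=\ee_p$ and $d'=\ee_a$ (either neighbor works), giving $x=\ExtPv-\ee_p+\ee_a$ and $x'=\ExtPv'-\ee_a+\ee_p$. The routine conditions ($\zero\le d\le\ExtPv$, $\zero\lneqq d'$, $\ExtPv\cdot d'=0$, $x$ binary) follow immediately from $p\in\SuppPv\setminus\SuppPv'$ and $a\in\SuppPv'\setminus\SuppPv$; the condition $x\ne\ExtPv'$ holds when $|\SuppPv\setminus\SuppPv'|\ge 2$, while the remaining case $|\SuppPv\setminus\SuppPv'|=1$ is handled by swapping the roles of $\ExtPv$ and $\ExtPv'$ and picking an isolated element of $\SuppPv'\setminus\SuppPv$ instead. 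For $x\in\Q(A)$, only rows $C\in\cov$ with $C\cap\SuppPv=\{p\}$ could fail, and by the structural claim $a\in C$. For $x'\in\Q(A)$, only rows $C\in\cov$ with $C\cap\SuppPv'=\{a\}$ could fail; by isolation of $a$ we have $|C\cap\SuppPv|=2$, and these two $\SuppPv$-elements bound the $\SuppPv$-gap containing $a$, which — since $a$ is a nearest $\SuppPv'$-neighbor of $p$ — is a gap bordering $p$, so $p\in C$. The main obstacle is the structural claim, whose proof requires a careful case analysis in the spirit of \tref{Lemma}{lem:nec:5}.
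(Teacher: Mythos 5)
Your overall strategy---a single-element swap $d=\ee_p$, $d'=\ee_a$ fed into \tref{Proposition}{propo:adys:2}---is the same as the paper's, but the structural claim on which your verification rests is false, and the step you use to derive it is exactly where it breaks. You assert that if $C_q\cap\SuppPv'=\{q\}$ and $\card{C_q\cap\SuppPv}=2$, then the two elements of $C_q\cap\SuppPv$ must be the bounds of the $\SuppPv$-gap containing $q$. A circular arc containing $q$ need not stay inside that gap: it can cross one endpoint of the gap and pick up that endpoint together with the \emph{next} element of $\SuppPv$ beyond it, never meeting the other bound of $q$'s gap. Concretely, take $n=12$, $\cov=\{\arc{1,3},\arc{2,6},\arc{7,9},\arc{8,12},\arc{10,2},\arc{3,7},\arc{4,8},\arc{9,1}\}$, $\SuppPv=\{1,4,7,10\}$, $\SuppPv'=\{2,3,8,9\}$. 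Both are minimal transversals and $\G(\ExtPv,\ExtPv')$ has no edges, yet for $p=1$ the unique row with $C\cap\SuppPv=\{1\}$ is $\arc{1,3}$, whose two $\SuppPv'$-elements $2,3$ lie on the \emph{same} side of $1$ (the gap $\arc{11,12}$ on the other side contains no element of $\SuppPv'$); moreover the row $\arc{10,2}$, which meets $\SuppPv'$ only at $2$, meets $\SuppPv$ in $\{10,1\}$, bounding the gap $\arc{11,12}$ rather than the gap containing $2$. So neither ``one nearest neighbour on each adjacent gap'' nor the consequence you draw from it holds. A secondary error is ``either neighbour works'': in the same example $d'=\ee_3$ gives $X'=\{1,2,8,9\}$, which misses the row $\arc{3,7}$, so $x'\notin\Q(A)$.

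The paper sidesteps this entire difficulty by first reducing, via \tref{Remark}{rem:contraction} and \tref{Lemma}{Lemma:PropContrac}, to the case $\SuppPv\cup\SuppPv'=\I$ with at most three ones per row. In that contracted instance there are no gaps: one of $p_i\pm1$ automatically lies in $\SuppPv'\setminus\SuppPv$ and serves as the swap partner, and checking that $X$ and $X'$ are transversals becomes a short local argument. Your uncontracted construction can likely be repaired by always choosing $a$ to be the $\SuppPv'$-element nearest to $p$ within $\arc{p_{i-1}+1,p_{i+1}-1}$ (on whichever side contains one), but that requires a genuine case analysis over the possible distributions $(1,1)$, $(0,k)$, $(k,0)$ of $\SuppPv'$-elements in the two gaps flanking $p$, none of which appears in your write-up.
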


\begin{proof}
As explained above, to prove this result we may assume that $A$ has at most three ones per row and $\SuppPv \cup \SuppPv'=\I$.

By~\tref{Remark}{rem:graph:1}, $\G(\ExtPv,\ExtPv')$ has at least three nodes, so
let us fix $p_i\in \SuppPv \setminus \SuppPv'$ and $C_t\in\cov$ satisfying $C_t \cap \SuppPv = \{p_i\}$.
Then, by \tref{Lemma}{lem:nec:4} we have $\card{C_t\cap (\SuppPv' \setminus \SuppPv)} = 2$ and $C_t\cap (\SuppPv' \setminus \SuppPv) = C_t\cap \SuppPv'$. Since $\SuppPv \cup \SuppPv'=\I$, it follows that at least one of $p_i-1$ or $p_i+1$ is in $\SuppPv'\setminus\SuppPv$. Without loss of generality, assume $p'_j=p_i+1$ is in $\SuppPv' \setminus \SuppPv$.

We claim that the sets
\[
   X = (\SuppPv \setminus \{p_i\})\cup \{p'_j\}
   \quad\text{and}\quad
   X' = (\SuppPv'\setminus \{p'_j\})\cup \{p_i\}
\]
are transversals.
To see this, assume on the contrary that, for example, $X\cap C_s=\emptyset $ for some $C_s\in \cov$. Then, since $\SuppPv$ is a transversal, we must have $C_s \cap \SuppPv = \{p_i\}$. Using  the fact that $A$ has at most three ones per row and that $p_j'=p_i+1\not \in C_s$, by \tref{Lemma}{lem:nec:4} it follows that $\{p_i-2,p_i-1\} = (\SuppPv' \setminus \SuppPv)\cap C_s$. Now, taking $C_r\in  \cov$ such that $C_r\cap \SuppPv' =\{ p_i-1\}$, we necessarily have $C_r\cap \SuppPv =\{ p_i\}$ because $A$ is row circular and $\{p_i-2,p_i+1\}\subset \SuppPv' \setminus \SuppPv$.
This implies $p_i \ngh p'_{j-1}=p_i-1$,
which contradicts that $\G(\ExtPv,\ExtPv')$ has no edges. This proves that $X$ is a transversal. Similarly, it can be shown that $X'$ is also an transversal.

Observe that $X$ cannot coincide with $\SuppPv'$ since we are exchanging just one element of $\SuppPv' \setminus \SuppPv$, but $\card{\SuppPv' \setminus \SuppPv}\ge 2$ by \tref{Lemma}{lem:nec:4}.

The lemma follows now from \tref{Proposition}{propo:adys:2} defining $x=\car(X)$, $x'=\car(X')$, $d=\car(\{p_i\})$ and $d'=\car(\{p'_j\})$.\qedhere
\end{proof}

We are now ready to prove a characterization of vertex adjacency for $\Q(A)$.

\begin{thm}\label{thm:CharactAdj}
Let $A\in \BB^{m\times n}$ be a row circular matrix. Let $\ExtPv$ and $\ExtPv'$ be distinct vertices of $\Q(A)$, and $\G(\ExtPv,\ExtPv')$ be their joint saturation graph.

The vertices $\ExtPv$ and $\ExtPv'$ are adjacent in $\Q(A)$ if, and only if, one of the following conditions is satisfied:
\begin{itemize}
\item
$\G(\ExtPv,\ExtPv')$ is connected.
\item
$\G(\ExtPv,\ExtPv')$ is almost-connected and $\SuppPv\cap \SuppPv'=\emptyset$.
\end{itemize}
\end{thm}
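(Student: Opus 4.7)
Sufficiency is immediate from Theorem~\ref{thm:suf}: in both listed cases $\G(\ExtPv,\ExtPv')$ is partite-connected (trivially when connected; when almost-connected with $\SuppPv\cap\SuppPv'=\emptyset$ the partite set not containing the isolated node lies entirely in the main component).

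For necessity we argue by contraposition: suppose $\G := \G(\ExtPv,\ExtPv')$ is neither connected nor almost-connected with disjoint supports, and derive non-adjacency. By Remark~\ref{rem:contraction} and Lemma~\ref{Lemma:PropContrac} we may reduce to the case $\SuppPv\cup\SuppPv'=\I$ and at most three ones per row of $A$, so that each edge of $\G$ comes from a $2$-element row $\{p,p'\}\in\cov$ with $p,p'$ cyclically adjacent in $\I$ and in opposite partite sets. If $\G$ has no edges, Lemma~\ref{lem:nec:noaristas} gives non-adjacency. Otherwise, by Lemma~\ref{lem:nec:5} every component of $\G$ is a path. If $\SuppPv\setminus\SuppPv'$ sits in a single component, Lemma~\ref{lem:nec:7} forces $\G$ to be connected or almost-connected; connectedness is excluded by hypothesis, and in the almost-connected sub-case the isolated node lies in $\SuppPv'\setminus\SuppPv$ and the main component traverses $V(\G)$ minus the isolated node via edges between cyclically consecutive positions of $\I$, so any $q\in\SuppPv\cap\SuppPv'$ would lie outside $V(\G)$ and break this continuity, forcing $\SuppPv\cap\SuppPv'=\emptyset$ and again contradicting our assumption.

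It remains to treat the case where $\SuppPv\setminus\SuppPv'$ is split across at least two components of $\G$; here we produce a non-adjacency certificate via Proposition~\ref{propo:adys:2}. Pick a component $F$ so that both $(\SuppPv\setminus\SuppPv')\cap V(F)$ and $(\SuppPv\setminus\SuppPv')\setminus V(F)$ are nonempty, and set
\[
d=\car\bigl((\SuppPv\setminus\SuppPv')\cap V(F)\bigr),\qquad d'=\car\bigl((\SuppPv'\setminus\SuppPv)\cap V(F)\bigr).
\]
Then $\zero\le d\le\ExtPv$, $\ExtPv\cdot d'=0$, and $d'\gneqq\zero$ provided $F$ contains at least one edge; when $F$ is instead a lone isolated node $q\in\SuppPv\setminus\SuppPv'$ we instead choose $d=\car(\{q\})$ and $d'=\car(\{q\pm 1\})$, using Lemma~\ref{lem:nec:4} to guarantee $q\pm 1\in\SuppPv'\setminus\SuppPv$, mirroring the construction in the proof of Lemma~\ref{lem:nec:noaristas}. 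The vectors $x=\ExtPv-d+d'$ and $x'=\ExtPv'-d'+d$ are binary and $x\ne\ExtPv'$. The main technical obstacle is to show $x,x'\in\Q(A)$, i.e., that the corresponding sets remain transversals; for each $C\in\cov$ this splits on the positions of $C\cap\SuppPv$ and $C\cap\SuppPv'$ (of cardinality at most $2$ by Lemma~\ref{lem:nec:1}) relative to $V(F)$, using the key fact that no edge of $\G$ crosses between $V(F)$ and its complement and that row-circularity of $A$ keeps the companion of any singleton intersection in the same component. Granted transversality, Proposition~\ref{propo:adys:2} yields that $\ExtPv$ and $\ExtPv'$ are not adjacent, completing the contradiction.
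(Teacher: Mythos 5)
Your sufficiency argument and your overall strategy for necessity (contrapositive, reduction via Remark~\ref{rem:contraction} and Lemma~\ref{Lemma:PropContrac}, the edgeless case via Lemma~\ref{lem:nec:noaristas}, and a swap certificate $d=\car(D)$, $d'=\car(D')$ built from a single component $F$ and fed into Proposition~\ref{propo:adys:2}) coincide with the paper's. However, there are two genuine gaps.

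First, in the case where $\SuppPv\setminus\SuppPv'$ lies in a single component you invoke Lemma~\ref{lem:nec:7}, but its hypothesis is that \emph{all of} $\SuppPv$ is contained in a component; since the node set of $\G(\ExtPv,\ExtPv')$ is $\SuppPv\symdif\SuppPv'$, that hypothesis already forces $\SuppPv\cap\SuppPv'=\emptyset$ and is strictly stronger than what you have. What you actually need here is the assertion that, for row circular $A$, partite-connected and not connected implies almost-connected with disjoint supports; this is essentially a piece of the theorem itself, and your ``continuity'' remark does not establish it: a priori an isolated node of $\SuppPv'\setminus\SuppPv$ and elements of $\SuppPv\cap\SuppPv'$ could coexist on the circular arc complementary to the path $F$, and nothing you say rules this out. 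The paper avoids this case split entirely by running one argument for an arbitrary component $F$ containing an edge; the impossibility of such configurations then falls out as a by-product.

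Second, and more seriously, in the remaining case you write ``Granted transversality\dots''. Proving that $X=\supp x$ and $X'=\supp x'$ are transversals is the bulk of the paper's proof and is not a routine check. One must treat separately the interior nodes of the path $F$ (whose two neighbors are $p_i-1$ and $p_i+1$ after the reduction) and its leaves; for a leaf $p_i$ with unique neighbor $p'_j\notin C_t$ one must show $p_{i+1}\in C_t\cap X$, which requires (i) the claim that $p_i$ and $p_{i+1}$ have no common neighbor, proved by showing that otherwise $\card{\SuppPv}=2$ and then Lemma~\ref{lem:nec:7} would make one of the theorem's conditions hold; (ii) Lemma~\ref{lem:nec:6} plus Lemma~\ref{lem:nec:7} to rule out $p_{i+1}\in F$; and (iii) a counting argument on the at most three ones of the row supporting $C_t$ to exclude $p_{i+1}\notin C_t$. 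Each of these steps leans on the standing assumption that neither condition of the theorem holds. Your appeal to ``no edge crosses $V(F)$'' and ``row-circularity keeps the companion in the same component'' does not substitute for them: a row $C_t$ with $C_t\cap\SuppPv=\{p_i\}$ and $p_i\in F$ may well meet $\SuppPv'$ only outside $F$, and it is exactly this situation that the paper's case analysis resolves. Until that analysis is carried out, the non-adjacency certificate is not justified.
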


\begin{proof}
If one of the conditions above is satisfied, then $\G(\ExtPv,\ExtPv')$ is partite-connected, and so $\ExtPv$ and $\ExtPv'$ are adjacent in $\Q(A)$ by \tref{Theorem}{thm:suf}.

Assume now that none of the conditions above is satisfied, and let us show that $\ExtPv$ and $\ExtPv'$ are not adjacent in $\Q(A)$. With this aim, as in the proof of \tref{Lemma}{lem:nec:noaristas}, we may assume that $A$ has at most three ones per row (thus, by \tref{Assumptions}{assums:A}, $A$ has between two and three ones per row) and $\SuppPv \cup \SuppPv'=\I$. 

If $\G(\ExtPv,\ExtPv')$ has no edges, the result follows from \tref{Lemma}{lem:nec:noaristas},
so we next assume that $\G(\ExtPv,\ExtPv')$ contains at least one edge.  

Let $F$ be a component containing an edge of $\G(\ExtPv,\ExtPv')$. 
Since $\G(\ExtPv,\ExtPv')$ is not connected, by \tref{Lemma}{lem:nec:5} we know that $F$ is a path, not a cycle. 

In order to show that $\ExtPv$ and $\ExtPv'$ are not adjacent we will use \tref{Proposition}{propo:adys:2}.
For doing this, we let $R = \SuppPv\cap \SuppPv'$ and define $D$ and $T$ by
\begin{subequations}
\label{equ:AdyImplicaCond:1}
\begin{equation}
D = F\cap \SuppPv , \quad 
T = \SuppPv \setminus (R \cup D).
\end{equation}
Similarly, we set
\begin{equation}
D' = F\cap \SuppPv', \quad
T' = \SuppPv' \setminus (R \cup D').
\end{equation}
Finally, we define $X$ and $X'$ by
\begin{equation}
X = R \cup T \cup D',
\quad
X' = R \cup T' \cup D.
\end{equation}
\end{subequations}

Our first aim is to prove that $X$ and $X'$ are transversals,
and we notice that it is enough to prove this only for $X$,
given the symmetry of the definitions in~\eqref{equ:AdyImplicaCond:1}.

So let us show that
\begin{equation}\label{equ:compli:-4}
C_t\cap X \ne\emptyset
\end{equation}
for any $C_t\in\cov$.

Since $\SuppPv= R \cup T \cup D$ is a transversal, it will be enough to consider just the case where $C_t$ intersects $\SuppPv$ at some element $p_i$ of $D$, i.e., assume
\begin{equation}\label{equ:compli:-3}
p_i\in C_t\cap D.
\end{equation}

If $p_i$ is connected in $\G(\ExtPv,\ExtPv')$ to two elements of $\SuppPv' \setminus \SuppPv$ (and hence of $D'$), then these are $p'_j=p_i-1$ and $p'_{j+1}=p_i+1$ because $\SuppPv \cup \SuppPv'=\I$,
and since
$C_t$ is a circular arc that contains $p_i$ and must intersect $\SuppPv'$, we have
\[
\emptyset
\neq
\{p_i-1,p_i+1\}\cap C_t 
=
\{p'_j,p'_{j+1}\}\cap C_t
\subset D' \cap C_t
\subset X \cap C_t \; ,
\]
and~\eqref{equ:compli:-4} holds.

Suppose now that $p_i$ is a leaf of the path $F$, and let
\[
p'_j\in D'
\]
be its only neighbor.

If $p'_j\in C_t$ we are done, so we next consider the case
\begin{equation}\label{equ:compli:-1}
p'_j\not \in C_t\; .
\end{equation}
Let us assume that $p'_j=p_i-1$, the case $p'_j=p_i+1$ being similar.

We claim that $p_i$ and $p_{i+1}$ cannot have a common neighbor in $\G(\ExtPv,\ExtPv')$. To see this, assume the contrary. Then, if $\card{\SuppPv} > 2$, by \trrefp{Lemma}{lem:nec:5}{lem:nec:5:3} we know that the common neighbor must belong to $\arc{p_i, p_{i+1}}$, but we have assumed that the only neighbor 
of $p_i$ is $p'_j$, and that $p'_j=p_i-1$, so it does not belong to $\arc{p_i, p_{i+1}}$. Thus, if $p_i$ and $p_{i+1}$ had a common neighbor, we must have $\card{\SuppPv} = 2$, i.e., $\SuppPv=\{p_i , p_{i+1}\}$. Note that in this case we have $\SuppPv \cap\SuppPv'=\emptyset$ (because $p_i$ and $p_{i+1}$ are nodes of $\G(\ExtPv,\ExtPv')$, and so they belong to $\SuppPv \setminus \SuppPv'$), and from \tref{Lemma}{lem:nec:7} we conclude also that $\G(\ExtPv,\ExtPv')$ is either connected or almost-connected (because $\SuppPv=\{p_i , p_{i+1}\}\subset F$). This proves our claim, since it contradicts our assumption that none of the conditions of the theorem is satisfied. 

Since $p_i$ and $p_{i+1}$ do not have a common neighbor in $\G(\ExtPv,\ExtPv')$ by the previous paragraph, observe that $p_{i+1}$ cannot belong to $F$, because otherwise by \tref{Lemma}{lem:nec:6} we could conclude that $\SuppPv \subset F$ and $\SuppPv \cap \SuppPv' = \emptyset$, and then by \tref{Lemma}{lem:nec:7} we could also conclude that $\G(\ExtPv,\ExtPv')$ is either connected or almost-connected, contradicting again our assumption that none of the conditions of the theorem is satisfied. 
Therefore, given that $p_{i+1}\in\SuppPv$, we have 
\[
p_{i+1}\in X.
\]

In order to show that
$p_{i+1}$ is also in $C_t$,
and therefore~\eqref{equ:compli:-4} holds,
let us see that the assumption
\begin{equation}\label{equ:compli:0}
p_{i+1}\notin C_t
\end{equation}
leads to a contradiction.
 
Since the $t$-th row of $A$ has between two and three ones, and we have assumed $p_i\in C_t$ in~\eqref{equ:compli:-3} and $p'_j=p_i-1\not \in C_t$ in~\eqref{equ:compli:-1}, 
it follows that either $C_t= \{ p_i, p_i+1 \}$ or $C_t=\{ p_i, p_i+1,p_i+2 \}$. What is more, as $\SuppPv \cup \SuppPv'=\I$, $p'_j=p_i-1$, $p_{i}\in \SuppPv \setminus \SuppPv'$ (since $p_i$ is a node of $\G(\ExtPv,\ExtPv')$) and we have assumed $p_{i+1}\notin C_t$ in~\eqref{equ:compli:0}, we conclude that either  
$C_t = \{ p_i, p_i+1 \}=\{ p_i, p'_{j+1} \}$ or $C_t = \{ p_i, p_i+1, p_i+2 \}=\{ p_i, p'_{j+1},p'_{j+2} \}$. Besides, note that $p'_{j+1}\in \SuppPv' \setminus \SuppPv$ in both cases, because $p'_{j+1} = p_i+1\not \in \SuppPv$ due to the fact that $p_{i+1} \not \in C_t$. Then, if $C_t=\{ p_i, p'_{j+1} \}$, we have $p_i\ngh p'_{j+1}$ by~\tref{Definition}{defn:G}. Similarly, assuming $C_t=\{ p_i, p'_{j+1},p'_{j+2} \}$, if $C_r\in\cov$ is such that $C_r\cap \SuppPv'=\{p'_{j+1}\}$,
we must have $C_r\cap \SuppPv=\{p_i\}$ (more precisely, we must have $C_r=\{p_i,p'_{j+1}\}$ because $p'_{j+1}-2=p'_j\not \in C_r$ and $p'_{j+1}+1=p'_{j+2}\not \in C_r$ by the choice of $C_r$, and the $r$-th row of $A$ has at least two ones), and so again we have $p_i\ngh p'_{j+1}$ by~\tref{Definition}{defn:G}. Thus, we can always conclude that $p_i\ngh p'_{j+1}$, which contradicts the fact that $p'_j$ is the only neighbor of $p_i$. 

Thus, the assumption~\eqref{equ:compli:0} leads to a contradiction and~\eqref{equ:compli:-4} holds, showing that $X$ and $X'$ are transversals.

Finally, we set
\[
d = \car(D), \quad d' = \car(D'), \quad
x = \ExtPv - d + d', \quad x' = \ExtPv' - d' + d,
\]
so that
$X = \supp x$
and
$X' = \supp x'$.

We notice now that
$D$ and $D'$ are not empty and different from
$\SuppPv$ and $\SuppPv'$ (respectively),
as otherwise $\SuppPv\cap \SuppPv' =\emptyset$ and $\G(\ExtPv,\ExtPv')$ would be connected or almost-connected by \tref{Lemma}{lem:nec:7},
and therefore
\[
\zero\lneqq d \lneqq v
\quad\text{and}\quad
\zero\lneqq d' \lneqq v'.
\]
Also, $x\ne\ExtPv$ since $D\neq \emptyset$ and $D\cap X=\emptyset$,
and $x\ne \ExtPv'$ since otherwise we would have $T=T'= \emptyset$ and then $\G(\ExtPv,\ExtPv')$ would be connected.

The fact that $\ExtPv$ and $\ExtPv'$ are not adjacent in $\Q(A)$ follows now from \tref{Proposition}{propo:adys:2}.
\end{proof}

\begin{exam}\label{ExampleDifTypes}
There are six types of joint saturation graphs of adjacent vertices of $\Q(A)$ when $A$ is row circular.
Among consecutive ones circulant matrices
(see \tref{Example}{exam:cnk:1}),
$\C{15}{6}$ is one of the smallest exhibiting all of these types as shown in \tref{Table}{table:cnk:2}:
disjoint supports and even path (type 1) or odd path (type 2), cycle (type 3), almost-connected (type 4), and finally overlapping supports and even path (type 5) or odd path (type 6). 

\begin{table}\centering
\begin{tabular}{*{6}{c}}
type & $\SuppPv$ & $\SuppPv'$
   & component/s \\
\hline\rule{0pt}{12pt}%
1 &
$\{1,7,13\}$    & $\{6,8,14,15\}$
   & $15, 1, 6, 7, 8, 13, 14$ (even path) \\
2 &
$\{6,12,15\}$   & $\{5,11,14\}$
   & $14, 15, 5, 6, 11, 12$  (odd path) \\
3&
$\{6,12,15\}$   & $\{5,8,14\}$
   & $5, 6, 8, 12, 14, 15, 5$ (cycle) \\
4 &
$\{6,12,15\}$   & $\{3,9,14\}$
   & $15, 3, 6, 9, 12$ (even path) + $14$ (node) \\
5 &
$\{6,12,15\}$   & $\{6,8,14,15\}$
   & $8, 12, 14$ (even path) \\
6 &
$\{6,12,15\}$   & $\{6,11,15\}$
   & $11, 12$ (odd path)
\end{tabular}
\caption{Examples showing each of the six possible behaviors of the joint saturation graph $\G(\ExtPv,\ExtPv')$ of adjacent vertices $\ExtPv$ and $\ExtPv'$ of $\Q(\C{15}{6})$.}
\label{table:cnk:2}
\end{table}
\end{exam}

\tref{Table}{table:cnk:2} also exhibits a simple consequence of our discussions:

\begin{coro}\label{coro:nec:1}
If $\ExtPv$ and $\ExtPv'$ are adjacent vertices of $\Q(A)$, then the cardinalities of their supports differ by at most one.
\end{coro}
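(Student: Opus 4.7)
The plan is to combine the characterization in Theorem~\ref{thm:CharactAdj} with the structural information about components of $\G(\ExtPv,\ExtPv')$ already established in Section~\ref{sec:nec}. The starting observation is the simple identity
\[
\Card{\SuppPv} - \Card{\SuppPv'} = \Card{\SuppPv \setminus \SuppPv'} - \Card{\SuppPv' \setminus \SuppPv},
\]
so it suffices to show that the two partite sets of the bipartite graph $\G(\ExtPv,\ExtPv')$ have cardinalities differing by at most one.

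By Theorem~\ref{thm:CharactAdj}, there are only two cases to handle. In the first case $\G(\ExtPv,\ExtPv')$ is connected; by parts~\refp{lem:nec:5:1} and~\refp{lem:nec:5:4} of Lemma~\ref{lem:nec:5} it is then either a single path or a single cycle. A bipartite cycle has partite sets of equal size, while a bipartite path has partite sets whose cardinalities differ by at most one (the exact difference being determined by whether both endpoints lie on the same side). Either way, the desired bound on partite-set cardinalities holds.

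In the second case $\G(\ExtPv,\ExtPv')$ is almost-connected and $\SuppPv \cap \SuppPv' = \emptyset$, so the partite sets are exactly $\SuppPv$ and $\SuppPv'$. The isolated node lies in one of these two sets; after possibly swapping the roles of $\ExtPv$ and $\ExtPv'$, we may assume $\SuppPv$ is contained in the path component. Lemma~\ref{lem:nec:7} then applies and yields $\Card{\SuppPv} = \Card{\SuppPv'}$, which is even stronger than what we need. Combining the two cases finishes the proof.

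I do not expect a serious obstacle: once one commits to the case split provided by Theorem~\ref{thm:CharactAdj} and recalls that components of $\G(\ExtPv,\ExtPv')$ can only be paths or cycles (Lemma~\ref{lem:nec:5}), the corollary is a short counting remark. The only subtlety is noticing that in the almost-connected case the disjointness of supports allows us to identify the partite sets with $\SuppPv$ and $\SuppPv'$, and to invoke Lemma~\ref{lem:nec:7} up to swapping $\ExtPv$ and $\ExtPv'$.
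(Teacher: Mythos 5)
Your proposal is correct and follows essentially the same route as the paper: the same case split given by Theorem~\ref{thm:CharactAdj}, the path/cycle structure from Lemma~\ref{lem:nec:5} in the connected case, and Lemma~\ref{lem:nec:7} in the almost-connected case with disjoint supports. The only addition is that you spell out the counting identity and the swap of $\ExtPv$ and $\ExtPv'$ explicitly, which the paper leaves implicit.
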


\begin{proof}
If the joint saturation graph $\G(\ExtPv,\ExtPv')$ of $\ExtPv$ and $\ExtPv'$ is connected, then by~\tref{Lemma}{lem:nec:5} it is either a path or a cycle. Since  $\G(\ExtPv,\ExtPv')$ is bipartite with partite sets $\SuppPv \setminus \SuppPv'$ and $\SuppPv'\setminus \SuppPv$, we conclude that the cardinalities of $\SuppPv$ and $\SuppPv'$ differ by at most one. 

If $\G(\ExtPv,\ExtPv')$ is almost-connected and $\SuppPv\cap\SuppPv' = \emptyset$, either $\SuppPv$ or $\SuppPv'$ is contained in a component of $\G(\ExtPv,\ExtPv')$. Then $\card{\SuppPv} = \card{\SuppPv'}$ by~\tref{Lemma}{lem:nec:7}.
\end{proof}

The previous corollary is also a consequence of a technique by Bartholdi et al.~\cite{BOR80}, which
Eisenbrand et al.~\cite{EOSV08}
employed to show that
if $A$ is row circular, then the \emph{slices}
$\{x\in Q(A)\mid \one\cdot x = \beta\}$
are integral polytopes for $\beta\in\ZZ$.

When the matrix $A$ is not row circular, the behavior of the joint saturation graphs may be quite different, as shown by the following example, which in particular shows that being partite-connected is not a necessary condition for adjacency in the case of circulant matrices.

\begin{exam}\label{exam:ppfnd:13}
Let us consider the circulant matrix
\[
A = \cte(1,1,0,1,0,0,0,0,0,1,0,0,0) \in\RR^{13\times 13},
\]
which is the line-point incidence matrix of a non-degenerate finite projective plane of order $3$, and so it is a circulant matrix
not isomorphic to any $\cnk$. 

It turns out that if $\ExtPv$ and $\ExtPv'$ are adjacent vertices of $\Q(A)$, then their supports cannot be disjoint,
and the components of their joint saturation graph $\G(\ExtPv,\ExtPv')$
are isomorphic to a complete bipartite graph:
either $K_{1,1}$ (one edge),
or $K_{2,1}$ (path with two edges),
or $K_{3,1}$,
or $K_{3,3}$.
In particular, the nodes of $\G(\ExtPv,\ExtPv')$ may have degree more than $2$ (compare with \tref{Lemma}{lem:nec:5}).

For instance, consider the vertex $\ExtPv$ with support
$\{6, 10, 11, 13\}$, and the following choices for an adjacent vertex $\ExtPv'$:
\begin{itemize}
\item
$\ExtPv'$ with support
$\{5, 9, 10, 12\}$.
Then $\G(\ExtPv,\ExtPv')$ is isomorphic to $K_{3,3}$.

\item\label{exam:ppfnd:13:2}
$\ExtPv'$ with support
$\{4, 5, 9, 10, 11, 13\}$.
In this case, $\G(\ExtPv,\ExtPv')$ is isomorphic to $K_{3,1}$.

\item
$\ExtPv'$ with support
$\{5, 7, 8, 10, 12, 13\}$.
Then $\G(\ExtPv,\ExtPv')$ has two components, each isomorphic to $K_{2,1}$, so it is not partite-connected (and hence almost-connected), and the supports of $\ExtPv$ and $\ExtPv'$ are not disjoint.
\end{itemize}

Moreover, the supports of the vertices of $\Q(A)$ have cardinality either $4$ or $6$, so that the conclusions of \tref{Corollary}{coro:nec:1} do not hold (for instance, the previous choice of $\ExtPv$ and the second choice for $\ExtPv'$).
\end{exam}

\section{Minimally nonideal matrices}
\label{sec:mni}

A matrix $A\in\BB^{m\times n}$ is said to be \emph{ideal} if $Q(A) = \Q(A)$,
and \emph{minimally nonideal} (mni for short) if it is not ideal but $Q(A)\cap \{x \in\RR^n \mid x_i = 0\}$ and $Q(A)\cap \{x \in\RR^n \mid x_i = 1\}$ are integral polyhedra for all $i\in\I$.

There are still several interesting open questions regarding mni matrices.
On one hand, there is no good characterization of them and many studies revolve around Lehman's fundamental ideas~\cite{Le79, Le79a, Le90}.
On the other hand, few infinite families of mni matrices are known: $\cn$ for odd $n$,
the matrices corresponding to degenerate finite projective planes,
the family described by Wang~\cite{Wa11},
as well as all of the corresponding blockers of these families.

Cornu{\'e}jols and Novick~\cite{CN94} stated that, for $n$ odd and greater than $9$, it is always possible to add to $\cn$ one row so that the resulting matrix is still mni, obtaining another infinite family of mni matrices.
In this section we will apply our findings to prove this result,
showing in addition other more elaborate infinite families of mni matrices based on the family $\cn$. Let us start with the following definition.

\begin{defn}\label{defn:core}
If a binary matrix $A$ with no dominating rows and $n$ columns contains a row submatrix $A_1\in\BB^{n\times n}$ which is nonsingular and has $r$ (where $r\ge 2$) ones per row and per column, and the other rows of $A$ have more than $r$ ones, then $A_1$ is called a \emph{core} of $A$.
\end{defn}

Notice that if $A$ has a core then it is unique (up to the permutation of rows).
On the other hand, $A$ may coincide with its core.

We summarize some of Lehman's results~\cite{Le79,Le79a,Le90} on mni matrices and their consequences in the next two theorems. With this aim, let us recall that the matrix associated with the degenerate projective plane with $t+1$ points and lines is
\[
   \jt = \begin{bmatrix}
      0      & 1      & 1      & \dots  & 1 & 1 \\
      1      & 1      & 0      & \dots  & 0 & 0 \\
      1      & 0      & 1      & \dots  & 0 & 0 \\
      \vdots & \vdots & \vdots & \ddots & \vdots & \vdots \\
      1      & 0      & 0      & \cdots & 1 & 0 \\
      1      & 0      & 0      & \cdots  & 0 & 1
      \end{bmatrix}\in \BB^{(t+1)\times (t+1)}.
\]

\begin{thm}[{\cite{Le79,Le79a,Le90}}]
\label{thm:lehman:1} 
If $A\in\BB^{m\times n}$ is a mni matrix, then
$Q(A)$ has a unique fractional vertex
and the blocker of $A$, 
$\blk(A)$, is mni.
\end{thm}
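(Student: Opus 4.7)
The statement collects two classical theorems of Lehman that the authors are merely summarizing; accordingly the "proof" is really a pointer to the cited papers, but I can sketch how the standard argument proceeds.

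For the uniqueness of the fractional vertex, the plan is: since $A$ is not ideal, $Q(A) \ne \Q(A)$ and hence $Q(A)$ has at least one fractional vertex $\bar{x}$. The first step is to observe that $0 < \bar{x}_i < 1$ for every $i$, for if $\bar{x}_i \in \{0,1\}$ then $\bar{x}$ would lie in the face $Q(A) \cap \{x_i = \bar{x}_i\}$ which by the mni hypothesis is integral, forcing $\bar{x}$ to be integral and giving a contradiction. Since $\bar{x}$ is a vertex of $Q(A)$ and none of the sign constraints $x \ge \zero$ are tight, there must exist $n$ linearly independent rows of $A$ that are tight at $\bar{x}$; collecting them yields a square nonsingular submatrix $A_1$ with $A_1 \bar{x} = \one$, so $\bar{x} = A_1^{-1}\one$ is determined by $A_1$. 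The main combinatorial work is then to show that $A_1$ must be the (unique) core of $A$ in the sense of \tref{Definition}{defn:core} — that is, it has a constant number $r\ge 2$ of ones per row and per column, and the remaining rows of $A$ have strictly more than $r$ ones. This forces $\bar{x} = \frac{1}{r}\,\one$ and therefore there is only one fractional vertex.

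For the fact that $\blk(A)$ is mni, the strategy relies on the involutive relation $\blk(\blk(A)) = A$ for clutter matrices together with the polarity between $Q(A)$ and $Q(\blk(A))$ in the antiblocking sense. One would first deduce that $\blk(A)$ is not ideal from the fact that $Q(A)$ fails to be integral. Second, one shows that every face of $Q(\blk(A))$ obtained by fixing one coordinate to $0$ or $1$ is integral, using the correspondence between minimal covers of $A$ and transversals of $\cov$ together with the structural description of the core produced in part one (in fact, $\bar{x}$ and its analogue for $\blk(A)$ are related by $\bar{x}\cdot \bar{y} = $ constant, which allows one to transport integrality of faces from one side to the other).

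The main obstacle is the core structure lemma underlying the uniqueness statement: extracting from the tight rows at $\bar{x}$ the detailed numerical information (equal row and column sums, dominance of the remaining rows, $\bar{x} = \frac{1}{r}\one$) is the deep part of Lehman's argument and occupies most of~\cite{Le79a,Le90}. Because the authors present the theorem only as a citation to Lehman, the text of this paper will presumably not reproduce that argument and will just refer the reader to those sources.
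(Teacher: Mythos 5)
You are right that the paper offers no proof of this theorem: it is stated purely as a summary of Lehman's results with a citation to \cite{Le79,Le79a,Le90}, so there is nothing in the text to compare your sketch against. Your outline of the classical argument is a fair one (the only quibble being that the relevant duality for set covering and idealness is the \emph{blocking} relation, not the antiblocking one, which pertains to packing and perfection), and you correctly identify that the core structure lemma is the deep step deferred to Lehman.
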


\begin{thm}[{\cite{Le79,Le79a,Le90}}]
\label{thm:lehman:2}
Let $A\in\BB^{m\times n}$ be a mni matrix which is not isomorphic to $\jt$ for any $t\ge 2$.
Then $A$ has a core, say $A_1$, and its blocker 
$\blk(A)$ has a core, say $B_1$, 
such that:
\begin{enumcona}
\item
$A_1\one = r\,\one$ and $B_1\one = s\,\one$.

\item
The rows of $A_1$ and $B_1$ may be permuted so that
\begin{equation}
   \label{equ:lehman}
    A_1 B_1^\text{\upshape\textsf{T}} = \jota + (r s - n)\,\matid,
\end{equation}
where $\jota$ is the matrix of all ones and
$\matid$ is the identity matrix.

\item
$\fast = \frac{1}{r}\,\one$ is a fractional vertex of $Q(A)$. 

\item\label{thm:lehman:2:d}
$\fast$ is in exactly $n$ edges of $Q(A)$. More precisely, $\fast$ is adjacent in $Q(A)$ to exactly $n$ vertices which make up the rows of $B_1$. 

\item\label{thm:lehman:2:s}
$x\cdot\one \ge s$ defines a facet of $\Q(A)$, and $Q(A) \cap \{x \in \RR^n \mid x\cdot\one \ge s\} = \Q(A)$.

\end{enumcona}
\end{thm}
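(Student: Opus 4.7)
The plan is to follow Lehman's original strategy, using Theorem~\ref{thm:lehman:1} as a starting point. First, I would invoke that theorem to produce the unique fractional vertex $\fast$ of $Q(A)$. Minimality of $A$ forces the faces $Q(A)\cap\{x_i=0\}$ to be integral, so $\fast>\zero$; hence every constraint active at $\fast$ comes from a row of $A$. Letting $A_1$ denote the submatrix of those active rows, the fact that $\fast$ is a vertex gives rank $n$, and uniqueness of the fractional vertex forces $A_1$ to be square and nonsingular. A symmetric argument applied to the mni matrix $\blk(A)$ (also by Theorem~\ref{thm:lehman:1}) produces the analogous core $B_1$ of $\blk(A)$.

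To obtain the constant row sums of (a), I would exploit the hypothesis that $A$ is not isomorphic to any $\jt$. A combinatorial analysis on the pair $(A_1,B_1)$ shows that every row of $A_1$ has the same number $r$ of ones and every row of $B_1$ has the same number $s$ of ones; ruling out the degenerate projective plane case is exactly what prevents the row sums from being unequal. With constancy in hand, $A_1\fast=\one$ yields $\fast=(1/r)\one$, establishing (c).

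The heart of the argument is (b). I would match each row $a_i$ of $A_1$ with a row of $B_1$ as follows: relaxing the $i$-th active inequality at $\fast$ and moving along the resulting edge of $Q(A)$ reaches a vertex of $Q(A)$ which, being different from the unique fractional vertex, must be integer and therefore the characteristic vector of a minimal transversal; after suitable labeling this vertex is a row $b_i$ of $B_1$ (simultaneously yielding (d)). Complementary slackness then gives $a_i\cdot b_j=1$ for $j\ne i$, since $b_j$ saturates every active row of $A_1$ except $a_j$. The diagonal values $a_i\cdot b_i=rs-n+1$ follow by summing and matching against the total number of ones in $A_1 B_1\tr$, which by the row-sum identities equals $rsn$.

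Finally, for (e), $s$ is the minimum transversal cardinality by the constancy of $B_1$, so $x\cdot\one\ge s$ is valid on $\Q(A)$. The $n$ rows of $B_1$ all satisfy this with equality and are affinely independent (by nonsingularity of $B_1$), so the inequality defines a facet. The identity $Q(A)\cap\{x\cdot\one\ge s\}=\Q(A)$ then reduces to checking that the only possibly fractional vertex $\fast=(1/r)\one$ of $Q(A)$ lies strictly below the hyperplane $x\cdot\one=s$, i.e., $n/r<s$, which follows by taking the trace of~\eqref{equ:lehman}. The main obstacle will be (b): it demands a delicate combinatorial tracking of which blocker vertices arise as neighbors of $\fast$ and how they intersect the active rows of $A_1$, and the exclusion of the $\jt$ family is essential at several points to keep the matching rigid.
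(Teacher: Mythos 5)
The paper does not prove this statement: it is presented explicitly as a summary of Lehman's results, cited to \cite{Le79,Le79a,Le90}, so there is no internal proof to compare your attempt against. Judged on its own merits, your outline reproduces the standard architecture of Lehman's argument but leaves its genuinely hard steps as assertions. The most serious gap is the passage from ``$\fast$ is the unique fractional vertex'' to ``the tight rows at $\fast$ form a square nonsingular matrix $A_1$ with constant row \emph{and column} sums $r$, all other rows of $A$ having more than $r$ ones.'' Uniqueness of the fractional vertex gives only that the tight rows have rank $n$; that there are exactly $n$ of them, that they are $r$-regular in both rows and columns, and that the same holds for $B_1$ in the blocker, is the content of Lehman's theorem and occupies essentially all of his proof --- it is precisely where the exclusion of $\jt$ and a delicate counting argument on the incidence structure of the pair $(A_1,B_1)$ enter. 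Your sentence ``a combinatorial analysis on the pair $(A_1,B_1)$ shows\dots'' names the theorem rather than proving it.

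Two smaller points. Your computation of the diagonal of $A_1B_1\tr$ via row sums requires the \emph{column} sums of $B_1$ to equal $s$ (so that the rows of $B_1$ sum to $s\,\one$), which is again part of what must be established, not a given. And in (e), the identity $Q(A)\cap\{x\in\RR^n\mid x\cdot\one\ge s\}=\Q(A)$ does not reduce to the single inequality $n/r<s$: one must also check that the cut creates no new fractional vertices, i.e.\ that every edge of $Q(A)$ incident to $\fast$ meets the half-space $x\cdot\one\ge s$ only at an integral point; this is exactly what part (d) supplies, so the argument can be closed, but it needs to be said. In short, the skeleton is the right one, but as written the proposal assumes the theorem's hardest conclusions in order to derive the rest; if you need this result you should cite Lehman (as the paper does) rather than attempt to reprove it in passing.
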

 
L{\"u}tolf and Margot~\cite{LM98} gave a condition which ensures that a binary matrix is mni. 

\begin{lem}[{\cite[Lemma~2.8]{LM98}}]\label{lem:LM98}
Suppose that $A\in\BB^{m\times n} $ has core $A_1$, with $r$ ones per row,
that its blocker $\blk(A)$ has core $B_1$, with $s$ ones per row, 
and that~\eqref{equ:lehman} holds. Then, if $Q(A)$ has just one fractional vertex, $A$ must be mni. 
\end{lem}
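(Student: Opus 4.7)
The plan is to verify the two defining conditions of minimal nonideality: that $Q(A)\neq \Q(A)$, and that for every $i\in\I$ both slices $Q(A)\cap \{x_i=0\}$ and $Q(A)\cap\{x_i=1\}$ are integral polyhedra. I would first pin down the fractional vertex by observing that $\fast=(1/r)\,\one$ is automatically a vertex of $Q(A)$: since $A_1$ is nonsingular with $A_1\one=r\,\one$, it is the unique solution of $A_1 x=\one$; the remaining rows of $A$ have strictly more than $r$ ones and so are strictly satisfied at $\fast$; hence $\fast\in Q(A)$ and the $n$ tight core constraints show it is a vertex. Being fractional and (by hypothesis) unique, $\fast$ is the unique fractional vertex of $Q(A)$, and in particular $A$ is not ideal.

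The slice $Q(A)\cap\{x_i=0\}$ is immediate: as $x_i\ge 0$ is one of the defining inequalities, this set is a face of $Q(A)$, so its vertices are precisely the vertices of $Q(A)$ that lie on $\{x_i=0\}$. Since $\fast_i=1/r>0$, the unique fractional vertex is excluded, every remaining vertex of $Q(A)$ is binary, and the recession cone $\{d\ge\zero:d_i=0\}$ is integer-generated; the slice is therefore integral.

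The substantive step is to show that $Q(A)\cap\{x_i=1\}$ is integral. A vertex $\bar x$ of this polyhedron satisfies $n-1$ independent constraints of $Q(A)$ tightly, hence lies on a face $F$ of $Q(A)$ of dimension at most~$1$. If $F$ is $0$-dimensional, $\bar x$ is a vertex of $Q(A)$ with $\bar x_i=1$, which cannot be $\fast$ and is therefore binary. If $F$ is a bounded edge, I would split into cases by its endpoints: when both endpoints are binary, $\bar x_i=1$ forces both endpoint values of $x_i$ to be $1$, so the whole edge lies in $\{x_i=1\}$ and $\bar x$ is not an isolated intersection; when one endpoint equals $\fast$, a one-line computation using $\fast_i=1/r$ and $r\ge 2$ shows that no point of $\{x_i=1\}$ lies strictly between the endpoints. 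If $F$ is an unbounded edge, its direction must be one of the extreme rays $\ee_j$ of the recession cone $\RR^n_{\ge\zero}$: for $v=\fast$ this is ruled out by counting tight constraints (only the $n-r$ core rows with $a_j=0$ stay tight along $\fast+t\,\ee_j$, and $n-r<n-1$ since $r\ge 2$); for an integer $v$, the direction must be $\ee_i$ with $v_i=0$ in order to hit $\{x_i=1\}$ transversally, giving the binary crossing point $v+\ee_i$. All cases thus yield binary vertices, as required.

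The main obstacle I expect is this second slice, since $\{x_i=1\}$ is not a face of $Q(A)$ and new vertices can genuinely be produced by the intersection. Ruling out fractional ones relies on the careful edge analysis above, and in particular on the fact that $\fast$ admits no unbounded edges — which is where the hypothesis $r\ge 2$ is essential. The remaining ingredients (identifying $\fast$, handling $\{x_i=0\}$, and disposing of the easy bounded-edge subcases) are then routine.
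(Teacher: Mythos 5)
Your proof is correct, but there is nothing in the paper to compare it with: the paper does not prove this lemma at all, it simply quotes it as Lemma~2.8 of L\"utolf and Margot~\cite{LM98}. So what you have produced is a self-contained proof of a cited result, and it holds up. The identification of $\fast=\frac{1}{r}\,\one$ as a vertex via the nonsingularity of $A_1$, the dismissal of the slice $Q(A)\cap\{x_i=0\}$ as a face, and the reduction of the slice $Q(A)\cap\{x_i=1\}$ to an analysis of faces of $Q(A)$ of dimension at most one are all sound; the key computations (no point of an open segment joining $\fast$ to a binary vertex meets $\{x_i=1\}$ because $1/r<1$; no unbounded edge leaves $\fast$ because only the $n-r<n-1$ core rows with a zero in column $j$ stay tight along $\fast+t\,\ee_j$, and they are independent since $A_1$ is nonsingular) are exactly right. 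Two small remarks. First, you silently identify ``non-fractional vertex'' with ``binary vertex''; this needs the (easy, standard) observation that a vertex $v$ of $Q(A)$ cannot have a coordinate $v_j\ge 2$, since then $v\pm\ee_j$ both lie in $Q(A)$ and $v$ is their midpoint. Second, and more interestingly, your argument never uses the existence of the blocker's core $B_1$ nor the identity~\eqref{equ:lehman}: it relies only on $A$ having a core and $Q(A)$ having a unique fractional vertex. So you have in fact proved a formally stronger statement than the one quoted; the extra hypotheses are carried along here because that is how the result is stated in~\cite{LM98}, not because your route needs them.
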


Despite the ``minimal'' in mni, a mni matrix may have a row submatrix which is also mni.
Cornu{\'e}jols and Novick~\cite{CN94}, and later L{\"u}tolf and Margot~\cite{LM98},
used this fact to construct many new mni matrices by adding rows to known ones.
Of interest to us here is the possibility of adding one or more rows to $\cn$, which is a mni matrix for $n$ odd, to obtain another mni matrix.

One of the main tools for studying the vertices of the polyhedron which results from the addition of an inequality to the system of inequalities describing a given polyhedron is the following variant of Lemma~8 of~\cite{FP96}, which essentially says that the new vertices are obtained by intersecting the edges of the original polyhedron with the hyperplane associated with the new inequality. 

\begin{propo}[{variant of \cite[Lemma~8]{FP96}}]
\label{propo:FK96}
Let $A \in\RR^{m\times n}$ be a matrix with non-negative entries, and suppose
$P = \{x\in\RR^n \mid Ax\ge b, x\ge\zero\}$ is a full dimensional polyhedron.
Let us further assume that the inequality $a\cdot x\ge c$ is independent of those defining $P$, where $a\ge\zero$ and $c > 0$.

Then, any vertex $\ExtPv$ of the polyhedron
$P' = P\cap \{x\in\RR^n \mid a\cdot x\ge c\}$ must satisfy one (and only one) of the following:

\begin{itemize}
\item
$\ExtPv$ is a vertex of $P$ satisfying $a\cdot \ExtPv\ge c$,

\item
$\ExtPv$ is a convex combination
$\ExtPv = \alpha \ExtPw + (1-\alpha ) \ExtPw'$ of adjacent vertices $\ExtPw$ and $\ExtPw'$ of $P$, satisfying $a\cdot \ExtPw > c$, $a\cdot \ExtPw' < c$, and $a\cdot \ExtPv = c$, that is,
$\alpha = (c - a\cdot \ExtPw') / (a\cdot \ExtPw - a\cdot \ExtPw')$,

\item
$\ExtPv = \ExtPw + \beta \ee_h$ for some
vertex $\ExtPw $ of $P$, $\beta > 0$ and $h\in\I$,
such that
$\{\ExtPw + \gamma \ee_h \mid \gamma \ge 0\}$ is an (infinite) edge of $P$,
$a\cdot \ExtPw < c$ and $a\cdot \ExtPv = c$,  that is,
$\beta = (c - a\cdot \ExtPw )/a_h$ (necessarily $a_h\ne 0$).

\end{itemize}
\end{propo}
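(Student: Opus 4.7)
The plan is to classify each vertex $\ExtPv$ of $P'$ by looking at the $n$ linearly independent active constraints that certify $\ExtPv$ as a vertex of the enlarged system $Ax\ge b$, $x\ge\zero$, $a\cdot x\ge c$. Either all $n$ of these can be chosen from the inequalities defining $P$, or the new constraint $a\cdot x=c$ must appear (irredundantly) among them.

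In the first alternative, $\ExtPv$ is already a vertex of $P$, and $\ExtPv\in P'$ forces $a\cdot\ExtPv\ge c$; this is case (i). In the second alternative, dropping the new constraint leaves $n-1$ linearly independent active constraints from the $P$-system, cutting out a $1$-dimensional face $F$ of $P$ that contains $\ExtPv$. Moreover, $\ExtPv$ lies strictly in the relative interior of $F$, since otherwise it would be a vertex of $P$ and would fall into case (i). I would then split on whether $F$ is bounded or unbounded.

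If $F=[\ExtPw,\ExtPw']$ with $\ExtPw,\ExtPw'$ vertices of $P$, the essentiality of $a\cdot x\ge c$ at $\ExtPv$ prevents $a$ from being constant along $F$, so $a\cdot\ExtPw\ne a\cdot\ExtPw'$. Since $\ExtPv$ lies strictly in the interior of the segment and $a\cdot\ExtPv=c$, the two values $a\cdot\ExtPw$ and $a\cdot\ExtPw'$ must strictly straddle $c$, and solving $a\cdot(\alpha\ExtPw+(1-\alpha)\ExtPw')=c$ for $\alpha$ gives the formula in case (ii).

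The main subtlety is the unbounded case, which is exactly where the hypothesis $A\ge\zero$ is used. Here I would write $F=\{\ExtPw+\gamma d\mid\gamma\ge 0\}$ for some vertex $\ExtPw$ of $P$ and direction $d$; as a $1$-dimensional face of $P$, $F$ has recession cone generated by $d$, and this ray must be an extreme ray of the recession cone of $P$. But the recession cone of $P$ is $\{d\mid Ad\ge\zero,\ d\ge\zero\}$, and when $A\ge\zero$ the condition $Ad\ge\zero$ follows automatically from $d\ge\zero$, so this recession cone collapses to $\RR^n_+$, whose extreme rays are precisely the coordinate rays $\RR_+\ee_h$. Hence we may take $d=\ee_h$ for some $h\in\I$, and write $\ExtPv=\ExtPw+\beta\ee_h$ with $\beta>0$ since $\ExtPv\ne\ExtPw$. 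From $a\cdot\ExtPv=c$ we get $\beta\,a_h=c-a\cdot\ExtPw$; non-redundancy of the new constraint along $F$ forces $a_h\ne 0$, hence $a_h>0$ because $a\ge\zero$, and therefore $a\cdot\ExtPw<c$. This is case (iii). The three cases are manifestly disjoint: (i) is exactly the set of vertices of $P'$ that are vertices of $P$, while in (ii) and (iii) the point $\ExtPv$ is an interior point of an edge of $P$, bounded or unbounded respectively.
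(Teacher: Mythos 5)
Your proof is correct. Note that the paper itself does not prove this proposition: it is stated as a variant of a lemma of Fukuda and Prodon and used as a black box, so there is no in-paper argument to compare against; your write-up supplies a self-contained proof. The structure is the natural one: classify a vertex $\ExtPv$ of $P'$ by whether the rank of the active constraints of the original system at $\ExtPv$ is $n$ (case (i)) or $n-1$ (so that the new constraint is needed and $\ExtPv$ lies in the relative interior of an edge of $P$), and then split the edge case by boundedness. The two places where care is required are both handled: the linear independence of the $n$ active constraints is exactly what forbids $a\cdot x$ from being constant along the edge, which yields the strict straddling $a\cdot\ExtPw > c > a\cdot\ExtPw'$ in case (ii) and $a_h\ne 0$ in case (iii); and the hypothesis $A\ge\zero$ is used precisely where it should be, to collapse the recession cone of $P$ to $\RR^n_+$ so that the unbounded edges of $P$ are translates of coordinate rays, giving the form $\ExtPw+\beta\ee_h$. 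Two standard facts are used implicitly and could be flagged in one line each: $P\subseteq\RR^n_+$ is pointed, so a one-dimensional face is a segment or a ray (never a line) and the endpoint of such a ray is a vertex of $P$; and the recession cone of a face of $P$ is a face of the recession cone of $P$, which is what makes the generator of an unbounded edge an extreme ray of $\RR^n_+$. With those remarks the argument is complete, and the disjointness of the three cases follows as you say from the fact that distinct faces of $P$ have disjoint relative interiors.
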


Suppose the $mni$ matrix $A$ has core $A_1$ and its blocker $B = \blk(A)$ has core $B_1$, so that the properties of \tref{Theorem}{thm:lehman:2} are satisfied, in particular~\eqref{equ:lehman}.
Let $\mc{B}$ be the set consisting of the fractional vertex $\fast$ and the binary vertices of $Q(A)$ which are adjacent to it (i.e., the rows of $B_1$, see \tref{Theorem}{thm:lehman:2}). Suppose furthermore that the binary matrix $M$ has more than $r$ ones per row, and we add to $A$ the rows of $M$ obtaining the matrix $E$, which has no dominating rows. Schematically,
\begin{equation}
    \label{equ:append}
    E = \begin{bmatrix} A \\ M \end{bmatrix}.
\end{equation}
Then, we have:

\begin{lem}\label{lem:mni:2}
If $M\ExtPu \ge\one$ for all $\ExtPu \in\mc{B}$,
and any vertex of $Q(E)$
which is not in $\mc{B}$ is binary and has more than $s$ ones,
then $E$ is mni.
\end{lem}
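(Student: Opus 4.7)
The plan is to verify the hypotheses of L\"utolf and Margot's criterion~\tref{Lemma}{lem:LM98} for $E$: I will identify a core $E_1$ of $E$ with $r$ ones per row, a core $F_1$ of $\blk(E)$ with $s$ ones per row, check that~\eqref{equ:lehman} still holds, and argue that $\fast$ is the only fractional vertex of $Q(E)$. The candidates will be $E_1 = A_1$ and $F_1 = B_1$ (the cores of $A$ and of $\blk(A)$ themselves); the whole point of the hypotheses on $M$ is that they are precisely what is needed to preserve these cores together with the fractional vertex $\fast$.

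First, I would check that $\fast$ is a vertex of $Q(E)$ and is the unique fractional one. Since $\fast \in \mc{B}$ and $Mu\ge\one$ for all $u\in\mc{B}$ by hypothesis, in particular $M\fast\ge\one$, while $A\fast\ge\one$ comes from $\fast\in Q(A)$, so $\fast\in Q(E)$. The $n$ rows of $A_1$ are tight at $\fast$ and linearly independent (as $A_1$ is nonsingular by~\tref{Theorem}{thm:lehman:2}), hence $\fast$ remains a vertex of $Q(E)$. By \trrefp{Theorem}{thm:lehman:2}{thm:lehman:2:d}, $\mc{B}$ consists of $\fast$ together with the rows of $B_1$, which are binary; every vertex of $Q(E)$ outside $\mc{B}$ is binary by hypothesis; so $\fast$ is the unique fractional vertex of $Q(E)$.

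Next, I would argue that $A_1$ is a core of $E$. By~\tref{Theorem}{thm:lehman:2} it is an $n\times n$ nonsingular submatrix of $A$ with $r$ ones per row and per column. All other rows of $A$ have more than $r$ ones (since $A_1$ is a core of $A$), and the rows of $M$ have more than $r$ ones by assumption, so no row of $E$ outside $A_1$ has $r$ or fewer ones. For the blocker, every row $u$ of $B_1$ is a minimal transversal of the clutter $\cov$ of $A$; the hypothesis $Mu\ge\one$ ensures that $u$ is still a transversal of the enlarged clutter associated with $E$, and minimality is automatically preserved when the clutter is enlarged (any proper subset of $u$ would fail to transverse $\cov$ already). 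Hence the rows of $B_1$ are rows of $\blk(E)$, forming an $n\times n$ nonsingular submatrix with $s$ ones per row and column. Any other row of $\blk(E)$ corresponds to a binary vertex of $Q(E)$ distinct from the rows of $B_1$ and from $\fast$, hence not in $\mc{B}$, and so has more than $s$ ones by the second hypothesis. Thus $B_1$ is a core of $\blk(E)$.

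Finally, the identity $A_1 B_1^{\trname}=\jota+(rs-n)\matid$ is inherited directly from~\eqref{equ:lehman} applied to $A$. All hypotheses of~\tref{Lemma}{lem:LM98} are then in place, and one concludes that $E$ is mni. The most delicate step is the clutter argument identifying $B_1$ as the core of $\blk(E)$: it requires both that the rows of $B_1$ survive as minimal transversals after adding the new rows of $M$ and that no \emph{new} binary minimal transversal with $s$ or fewer ones appears, and the second hypothesis on the cardinality of the supports of the remaining vertices is exactly what rules out the latter possibility.
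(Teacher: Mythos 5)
Your proposal is correct and follows essentially the same route as the paper: show that the hypothesis $M\ExtPu\ge\one$ keeps all elements of $\mc{B}$ as vertices of $Q(E)$ (so $\fast$ is the unique fractional vertex), identify $A_1$ and $B_1$ as the cores of $E$ and $\blk(E)$ respectively using the cardinality hypothesis on the remaining vertices, and invoke \tref{Lemma}{lem:LM98}. The paper's write-up is terser (it gets the vertex claim from $Q(E)\subset Q(A)$ rather than from tightness of the rows of $A_1$), but the argument is the same.
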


\begin{proof}
Since $Q(E)\subset Q(A)$, if $\ExtPv\in Q(E)$ is a vertex of $Q(A)$, then it is also a vertex of $Q(E)$.
Thus, the elements of $\mc{B}$ are vertices of $Q(E)$ because $M\ExtPu \ge \one$ for $\ExtPu \in \mc{B}$.
Since any vertex of $Q(E)$ which is not in $\mc{B}$ is binary, we conclude that $Q(E)$ has just one fractional vertex.

By \tref{Lemma}{lem:LM98}, it is enough to show now that $E$ has core $A_1$ and $\blk(E)$ has core $B_1$.
The first condition is clear ($M$  has more than $r$ ones per row), and the second one follows from the fact that the rows of $\blk(E)$ are exactly the binary vertices of $Q(E)$, and that any vertex of $Q(E)$ which is not in $\mc{B}$ has more than $s$ ones.\qedhere
\end{proof}

The following result relates vertex adjacency in $Q(A)$ with vertex adjacency in $\Q(A)$ when $A$ is mni.

\begin{lem}\label{lem:mni:fast}
Let $A$ be a mni matrix not isomorphic to any $\jt$ ($t\ge 2$). Suppose the core $A_1$ of $A$ has $r$ ones per row, and the core $B_1$ of its blocker has $s$ ones per row.
Let $\ExtPv$ and $\ExtPv'$ be binary vertices of $Q(A)$.
Then, we have: 
\begin{enumcona}

\item\label{lem:mni:fast:a}
If $\max\,\{\ExtPv\cdot\one,  \ExtPv'\cdot\one\} > s$, the vertices $\ExtPv$ and $\ExtPv'$ are adjacent in $Q(A)$ if and only if they are adjacent in $\Q(A)$.

\item\label{lem:mni:fast:b}
If $\ExtPv\cdot\one = \ExtPv'\cdot\one = s$, the vertices $\ExtPv$ and $\ExtPv'$ are always adjacent in $\Q(A)$,
and they are adjacent
in $Q(A)$ if and only if $\supp \ExtPv\cup \supp \ExtPv' \neq \I$.
\end{enumcona}
\end{lem}

\begin{proof}
By~\trrefp{Theorem}{thm:lehman:2}{thm:lehman:2:s}, we know that $ Q(A) \cap \{x \in \RR^n \mid x\cdot\one \ge s\} = \Q(A)$. If $\max\,\{\ExtPv\cdot\one, \ExtPv'\cdot\one\} > s$, at least one of the vertices $\ExtPv$ and $\ExtPv'$ does not satisfy the inequality $x\cdot\one \ge s$ tightly. Therefore, when we add this inequality to the system $A x\ge \one$, the adjacency relation between these vertices does not change. This shows \refp{lem:mni:fast:a}.  

For the first part of \refp{lem:mni:fast:b},
we notice that $\ExtPv$ satisfies with equality $n-1$ of the inequalities corresponding to the rows of $A_1$, as $v$ is adjacent to $\fast = \frac{1}{r}\,\one$ by \trrefp{Theorem}{thm:lehman:2}{thm:lehman:2:d}.
Since this is also true for $\ExtPv'$, $\ExtPv$ and $\ExtPv'$ satisfy tightly $n-2$ inequalities coming from $A_1$ and the equality $x\cdot\one = s$ which defines a facet of $\Q(A)$ and is linearly independent with those of $A_1$ (as $\fast$ does not satisfy it). Thus, $\ExtPv$ and $\ExtPv'$ are adjacent in $\Q(A)$. 

For the last part of~\refp{lem:mni:fast:b},
assume first that $\supp \ExtPv\cup \supp \ExtPv' \neq \I$. Let $y = \sum_{k\in \I[\ell]} \lambda_k \ExtPu^k$ be a strict convex combination of vertices of $Q(A)$, and suppose $y \leq \frac{1}{2}\,(\ExtPv + \ExtPv')$. Observe that $y_h = 0$ for any $h\in \supp \ExtPv\cup \supp \ExtPv'$. Then for any of such $h$ and any $k\in \I[\ell]$, we have $\ExtPu^k_h =0$, and so $\ExtPu^k\neq \fast$, because $y$ is a strict convex combination of $\ExtPu^1,\ldots ,\ExtPu^\ell$. Therefore $\ExtPu^k$ is a binary vertex of $Q(A)$, and so of $\Q(A)$, for all $k\in \I[\ell]$. Since $\ExtPv$ and $\ExtPv'$ are adjacent in $\Q(A)$ by the previous paragraph, from the equivalence of~\refp{propo:adys:1:a} and~\refp{propo:adys:1:c} of \tref{Proposition}{propo:adys:1} when $P=\Q(A)$, it follows that $\ell=2$ and, without loss of generality, $\ExtPu^1=\ExtPv$ and $\ExtPu^2=\ExtPv'$. Using again the equivalence of~\refp{propo:adys:1:a} and~\refp{propo:adys:1:c} of \tref{Proposition}{propo:adys:1} but in this case when $P=Q(A)$, we conclude that $\ExtPv$ and $\ExtPv'$ are adjacent in $Q(A)$.

Finally, if we assume that $\supp \ExtPv\cup \supp \ExtPv' = \I$, we have $\frac{1}{2}\,(\ExtPv + \ExtPv')\ge \fast$ (since $r\ge 2$, see \tref{Definition}{defn:core}), and then by the equivalence of~\refp{propo:adys:1:a} and~\refp{propo:adys:1:c} of  \tref{Proposition}{propo:adys:1} when $P=Q(A)$, we conclude that $\ExtPv$ and $\ExtPv'$ are not adjacent in $Q(A)$.
\end{proof}

In the remainder of this section we will focus our attention on the (mni) matrix $\cn$ for $n$ odd. This matrix coincides with its core, having exactly $2$ ones per row and per column, and the core of $\blk(\cn)$ has $s = (n + 1)/2$ ones per row and per column.

It is convenient to observe that $\cn$ is the edge-node incidence matrix of the cycle graph $\circuit_n$ with $n$ nodes, and that for any pair of binary vertices $\ExtPv$ and $\ExtPv'$ of $\qcn$, the joint saturation graph $\G(\ExtPv,\ExtPv')$ coincides with the subgraph of $\circuit_n$ induced by $\SuppPv\symdif \SuppPv'$. 
 
We now present two results providing properties of the vertices of $\qcn$.

\begin{coro}\label{CoroTrivial}
Let $\ExtPv$ and $\ExtPv'$ be two binary vertices of $\qcn$. If $\ExtPv$ and $\ExtPv'$ are adjacent in $\qcn$, then their joint saturation graph $\G(\ExtPv,\ExtPv')$ is connected. Moreover, if $\max\,\{\ExtPv\cdot\one,  \ExtPv'\cdot\one\} > s=(n + 1)/2 $, then also the reverse implication holds.
\end{coro}

\begin{proof}
To prove the first part of the corollary, observe that for any $A\in\BB^{m\times n}$, two binary vertices of $Q(A)$ are adjacent in $Q(A)$ only if they are adjacent in $\Q(A)$. Then, as a consequence of the ``only if'' part of \tref{Corollary}{coro:chacarct:two:ones} applied to $G=\circuit_n$, $\ExtPv$ and $\ExtPv'$ are adjacent in $\qcn$ only if $\G(\ExtPv,\ExtPv')$ is connected (recall that the subgraph of $\circuit_n$ induced by $\SuppPv\symdif \SuppPv'$ coincides with $\G(\ExtPv,\ExtPv')$).

The second part of the corollary follows readily from \trrefp{Lemma}{lem:mni:fast}{lem:mni:fast:a} and the ``if'' part of \tref{Corollary}{coro:chacarct:two:ones} applied to $G=\circuit_n$ (or \tref{Theorem}{thm:CharactAdj}).
\end{proof}

\begin{lem}\label{lem:mni:extremo}
The only fractional vertex of $\qcn$ is $\fast =\frac{1}{2}\,\one $. Any binary vertex of $\qcn$ is the characteristic vector of a minimal node cover of $\circuit_n$, thus it has at least $s = (n + 1)/2$ ones. A point $\ExtPv\in\BB^n$ is a vertex of $\qcn$ if and only if it has neither three consecutive ones, nor two consecutive zeros. 
\end{lem}

\begin{proof}
We omit the proof, since the only non-trivial part of the lemma follows from~\tref{Theorem}{thm:lehman:1}. 
\end{proof}

In \cite{CN94}, Cornu{\'e}jols and Novick remark that $\cn$ can be extended to another mni matrix by appending to it one row of a specific form. Next, we prove this fact. 

\begin{propo}[\cite{CN94}]\label{propo:CN94}
For $n\ge 9$ odd, let $\{i,j,l\}\subset \I$
be such that
$i < j < l$,
$j - i\ge 3$ odd,
$l - j\ge 3$ odd,
and either $i\neq 1$ or $l\neq n$.
Let $a\in \BB^n$ be the characteristic vector of the set $\{i,j,l\}$.
Then, the matrix $E$ obtained by adding to $\cn$ the row vector $a$ is mni.
\end{propo}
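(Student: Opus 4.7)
The plan is to apply \tref{Lemma}{lem:mni:2} with $A = \cn$ and $M$ equal to the single row $a$. Since $\cn$ coincides with its own core, we take $r = 2$; the core $B_1$ of $\blk(\cn)$ has $s = (n+1)/2$ ones per row; and $a$ has $3 > r$ ones. The hypotheses $j-i \geq 3$, $l-j \geq 3$, together with the exclusion of $(i,l) = (1,n)$, ensure that no two elements of $\{i,j,l\}$ are cyclically consecutive in $\I$, so $a$ neither is dominated by, nor dominates, any row of $\cn$; hence $E$ has no dominating rows.

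For the first condition of \tref{Lemma}{lem:mni:2}, clearly $a \cdot \fast = 3/2 \geq 1$. For a row $u$ of $B_1$ (a binary vertex of $Q(\cn)$ with exactly $s$ ones and, by \tref{Lemma}{lem:mni:extremo}, no two consecutive zeros), we need $a \cdot u \geq 1$; so suppose for contradiction that $u_i = u_j = u_l = 0$. The three cyclic arcs $\arc{i+1,j-1}$, $\arc{j+1,l-1}$, $\arc{l+1,i-1}$ have lengths $j-i-1$, $l-j-1$ and $n-l+i-1$, respectively, each even (since $n$, $j-i$, $l-j$ are odd) and each at least $2$ (thanks to the excluded case). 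On an arc of even length $2k$ whose two outer neighbors in $\I$ lie in $\{i,j,l\}$, the endpoints must equal $1$, and the absence of two consecutive zeros then forces at least $k+1$ ones in the arc. Summing over the three arcs gives $u\cdot \one \geq (n-3)/2 + 3 = (n+3)/2 > s$, contradicting $u\cdot \one = s$.

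For the second condition of \tref{Lemma}{lem:mni:2}, the vertices of $Q(E)$ are classified by \tref{Proposition}{propo:FK96} into three types. Type (a) vertices are vertices of $Q(\cn)$ satisfying $a\cdot x\geq 1$; they are either in $\mc{B}$ (namely $\fast$ or a row of $B_1$) or binary vertices of $\blk(\cn)$ with more than $s$ ones. Type (c) vertices have the form $v = w + \beta \ee_h$ along an infinite edge of $Q(\cn)$ with $a\cdot w < 1$; since $a\cdot\fast > 1$ forces $w \neq \fast$, $w$ is binary with $a\cdot w = 0$, i.e.\ $w_i = w_j = w_l = 0$, and the preceding arc-counting gives $w\cdot \one \geq (n+3)/2$. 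Then $h\in\{i,j,l\}$, $\beta = 1$, and $v = w + \ee_h$ is binary with $v\cdot \one > s$.

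The main obstacle is ruling out type (b) vertices: a strict convex combination $v = \alpha w + (1-\alpha)w'$ of adjacent vertices $w, w'$ of $Q(\cn)$ with $a\cdot w > 1 > a\cdot w'$. As above, $w'$ must be binary with $w'_i = w'_j = w'_l = 0$ and $w'\cdot \one \geq (n+3)/2 > s$; consequently $w \neq \fast$, since by \trrefp{Theorem}{thm:lehman:2}{thm:lehman:2:d} the only neighbors of $\fast$ in $Q(\cn)$ are the rows of $B_1$, which have exactly $s$ ones. Thus $w$ is binary with $a\cdot w \geq 2$. Since $\max\{w\cdot\one, w'\cdot\one\} > s$, \trrefp{Corollary}{coro:mni:fast}{coro:mni:fast:a} equates adjacency in $Q(\cn)$ with adjacency in $\Q(\cn)$, and \trrefp{Lemma}{lem:mni:intervalo}{lem:mni:intervalo:e} then forces the joint saturation graph $\G(w,w')$ to be a single path. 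But $a\cdot w \geq 2$ together with $a\cdot w' = 0$ places at least two elements of $\{i,j,l\}$ in $\supp w \setminus \supp w'$, hence in the node set of $\G(w,w')$; this contradicts \tref{Lemma}{lem:componente}, which asserts that each component of $\G(w,w')$ contains at most one element of $\{i,j,l\}$. Therefore type (b) never occurs, and \tref{Lemma}{lem:mni:2} concludes that $E$ is mni.
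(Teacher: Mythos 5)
Your proposal is correct and follows essentially the same route as the paper: apply \tref{Lemma}{lem:mni:2}, classify the vertices of $Q(E)$ via \tref{Proposition}{propo:FK96}, handle the type (c) vertices by showing they are binary with more than $s$ ones, and rule out type (b) by combining \trrefp{Corollary}{coro:mni:fast}{coro:mni:fast:a}, \trrefp{Lemma}{lem:mni:intervalo}{lem:mni:intervalo:e} and \tref{Lemma}{lem:componente}. The only (harmless) difference is that you verify $a\cdot u\ge 1$ for $u\in\mc{B}$ and the bound $u\cdot\one\ge (n+3)/2$ for binary vertices vanishing on $\{i,j,l\}$ by a direct no-two-consecutive-zeros count, where the paper instead reads these off from the explicit form of the rows of the core of $\blk(\cn)$ and from Lehman's theorem.
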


\begin{proof}
By \tref{Lemma}{lem:mni:2} it will be enough to show that:
\begin{enumcona}

\item\label{propo:CN94:1}
If $\mc{B}$ is the set consisting of the fractional vertex $\fast =\frac{1}{2}\,\one $ and the vertices of $\qcn$ which are adjacent to it, then $a\cdot \ExtPu \ge 1$ for all $\ExtPu \in\mc{B}$.

\item\label{propo:CN94:2}
Any vertex of $Q(E)$ not in $\mc{B}$
is binary and has more than $s = (n + 1)/2$ ones.

\end{enumcona}

To show~\refp{propo:CN94:1}, notice that $a\cdot\fast = 3/2 > 1$. Moreover, if $\ExtPu$ is adjacent to $\fast$ in $\qcn$, then $\ExtPu$ is in the core of $\blk(\cn)$ by \trrefp{Theorem}{thm:lehman:2}{thm:lehman:2:d}. Therefore, we have $a\cdot \ExtPu \ge 1$ due to the fact that $\blk(\cn) = \cte (1,1,0,1,0,1,\dots,0,1,0,1,0)$ and that $\card{\arc{i,j}}$, $\card{\arc{j,l}}$ and $\card{\arc{l, i}}$ are even (the latter follows from the fact that $j - i$, $l - j$ and $n$ are odd). 

To show~\refp{propo:CN94:2} we rely on~\tref{Proposition}{propo:FK96}.

Suppose $\ExtPv$ is a vertex of $Q(E)$ which is a convex combination of the adjacent vertices $\ExtPw$ and $\ExtPw'$ of $\qcn$,
\[ 
\ExtPv = \alpha \ExtPw + (1 - \alpha ) \ExtPw', 
\]
with
\begin{equation}
   \label{equ:propo:CN94}
    a\cdot\ExtPv = 1, \quad
    a\cdot\ExtPw > 1, \quad
    a\cdot\ExtPw' < 1.
\end{equation}

Since $a\cdot\fast = 3/2$ and $a\cdot \ExtPu \ge 1$ for any vertex $\ExtPu$ of $\qcn$ which is adjacent to $\fast$, we conclude that $\fast$ is different from both $\ExtPw$ and $\ExtPw'$. Thus, $\ExtPw$ and $\ExtPw'$ are binary (because $\fast$ in the only fractional vertex of $\qcn$ by~\tref{Theorem}{thm:lehman:1}), and so  we must have
\[
a\cdot\ExtPw' = 0 \; .
\]

Now, since $\ExtPw$ and $\ExtPw'$ are adjacent in $\qcn$, by \tref{Corollary}{CoroTrivial} we know that their joint saturation graph $\G(\ExtPw, \ExtPw')$, which coincides with the subgraph of $\circuit_n$ induced by $\SuppPw\symdif \SuppPw'$, is connected. As $\G(\ExtPw, \ExtPw')$ is bipartite and $n$ is odd, $\G(\ExtPw, \ExtPw')$ cannot be equal to $\circuit_n$, and so it is a path. Suppose that $\card{(\SuppPw\symdif \SuppPw')\cap \{i,j,l\}}\geq 2$. Then $\card{(\SuppPw \setminus \SuppPw') \cap \{i,j,l\}}\geq 1$ and $\card{(\SuppPw'\setminus \SuppPw) \cap \{i,j,l\}}\geq 1$, because $\card{\arc{i,j}}$, $\card{\arc{j,l}}$ and $\card{\arc{l, i}}$ are even, and by \tref{Definition}{defn:G} each edge of the path $\G(\ExtPw, \ExtPw')$ connects a node in $\SuppPw \setminus \SuppPw'$ with a node in $\SuppPw'\setminus \SuppPw$. Thus, it is not possible to have $\card{\SuppPw'\cap \{i,j,l\}}= 0$ and $\card{\SuppPw \cap \{i,j,l\}}\geq 2$. Therefore, since $a\cdot\ExtPw' = 0$, we must have
\[
a\cdot\ExtPw \le 1,
\]
which contradicts~\eqref{equ:propo:CN94}. We conclude that the second possibility described in \tref{Proposition}{propo:FK96} cannot happen for $Q(E) = \qcn \cap \{x\in\RR^n \mid a\cdot x\ge 1\}$. 

Suppose now $\ExtPv$ is a vertex of $Q(E)$ of the form
\[
\ExtPv = \ExtPw + \beta \ee_h,
\]
where $\ExtPw$ is a vertex of $\qcn$ satisfying $a\cdot\ExtPw < 1$, and $a\cdot\ExtPv = 1$. Once again, $\ExtPw$ cannot be either $\fast$ or any of the vertices of $\qcn$ which are adjacent to it. Thus, by~\tref{Theorems}{thm:lehman:1} and~\ref{thm:lehman:2} it follows that $\ExtPw$ is binary and $s < \ExtPw\cdot\one$. Then, we have $a\cdot\ExtPw =0$, which implies $\beta = 1$ since $a\cdot\ExtPv = 1$. Thus, we conclude that $\ExtPv$ is binary, and
\[ 
s < \ExtPw\cdot\one \le \ExtPv\cdot\one \; , 
\]
proving~\refp{propo:CN94:2}. 
\end{proof}

One would hope that it is possible to add a circulant matrix $M$ instead of just a single row, but this is not true in general. For instance, if we add to $\C{15}{2}$ the matrix
\[\setcounter{MaxMatrixCols}{15}
   M = \begin{bmatrix}
   1 & 0 & 0 & 0 & 0 & 1 & 0 & 0 & 0 & 0 & 1 & 0 & 0 & 0 & 0 \\
   0 & 1 & 0 & 0 & 0 & 0 & 1 & 0 & 0 & 0 & 0 & 1 & 0 & 0 & 0 \\
   0 & 0 & 1 & 0 & 0 & 0 & 0 & 1 & 0 & 0 & 0 & 0 & 1 & 0 & 0 \\
   0 & 0 & 0 & 1 & 0 & 0 & 0 & 0 & 1 & 0 & 0 & 0 & 0 & 1 & 0 \\
   0 & 0 & 0 & 0 & 1 & 0 & 0 & 0 & 0 & 1 & 0 & 0 & 0 & 0 & 1
   \end{bmatrix},
\]
the resulting matrix $E$ is not mni, whereas, by \tref{Proposition}{propo:CN94}, adding just one row of $M$ we obtain a mni matrix.

Let us see how to obtain systematically a mni matrix by adding to $\cn$ all the rows of a circulant matrix. With this aim, let us assume $n = 3\nu$, where $\nu \in \NN$ is greater than or equal to 3. Henceforth, for each $i\in \I[3]$, we denote by $a^i$ the unique row vector of $\BB^n$ for which 
\begin{equation}\label{equ:3nu:v}
\SuppVa^i =\left\{ i, i + 3,\ldots , i + 3(\nu -1) \right\}
\end{equation}
and by $\ExtPw^i$ the unique point of $\BB^n$ for which 
\begin{equation}\label{equ:3nu:w}
\SuppPw^i = \I \setminus \SuppVa^i \; .
\end{equation}
Thus, we have for example $\ExtPw^1 = (0,1,1,0,1,1,\dots,0,1,1)$. We observe that $\ExtPw^i\cdot a^i = 0$ and (by \tref{Lemma}{lem:mni:extremo}) $\ExtPw^i$ is a vertex of $\qcn$, for each $i\in \I[3]$. Finally, we let $\mc{W} = \{\ExtPw^1,\ExtPw^2,\ExtPw^3\}$. 

We next show:

\begin{thm}\label{thm:3nu}
Let $n = 3\nu$ be odd, where $\nu\geq 3$. If $E$ is the matrix obtained from $\cn$ by appending to it the rows $a^1$, $a^2$ and $a^3$ (defined by~\eqref{equ:3nu:v}), then $E$ is mni. 
\end{thm}

The proof of this result is based on the following two lemmas, in which we preserve the notation. 

\begin{lem}\label{lem:3nu:2} 
For each $i\in \I[3]$ the following statements are equivalent:

\begin{enumcona}

\item\label{lem:3nu:2:a}
$\ExtPu$ is adjacent to $\ExtPw^i$ (defined by~\eqref{equ:3nu:w}) in $\qcn$.

\item\label{lem:3nu:2:b} 
$\ExtPu$ is binary and there exists $j \in \I$ such that $\SuppPu \setminus \SuppPw^i = \{j\}$ and $\SuppPw^i \setminus \SuppPu = \{j-1,j+1\}$ (here, as usual, the operations on the indices should be understood modulo $n$).
\end{enumcona}
\end{lem}  

\begin{proof}
In the first place, assume that~\refp{lem:3nu:2:b} holds. Then, $\ExtPu$ can be obtained by replacing in $\ExtPw^i$ a subvector of the form $(1,1,0,1,1)$ by one of the form $(1,0,1,0,1)$, and so by \tref{Lemma}{lem:mni:extremo} it follows that $\ExtPu$ is a vertex of $\qcn$. Besides, since $\G(\ExtPw^i,\ExtPu)$ consists of the path $j-1$, $j$, $j+1$ and $\ExtPw^i\cdot\one =  2\nu$ is strictly greater than the number $s = (3\nu + 1)/2$ of ones per row in the core of $\blk(\cn)$, by \tref{Corollary}{CoroTrivial}
we conclude that $\ExtPu$ and $\ExtPw^i$ are adjacent in $\qcn$. 
 
Assume now that~\refp{lem:3nu:2:a} hold. Since $\ExtPw^i\cdot\one =  2\nu > s = (3\nu + 1)/2$, by \trrefp{Theorem}{thm:lehman:2}{thm:lehman:2:d} we conclude that $\ExtPu$ is not equal to the unique fractional vertex $\fast=\frac{1}{2}\,\one$ of $\qcn$, and so $\ExtPu$ is binary. Then, as $\ExtPu$ and $\ExtPw^i$ are adjacent in $\qcn$, by \tref{Corollary}{CoroTrivial} we know that $\G(\ExtPw^i,\ExtPu)$ is connected. Let $j$ be a node of $\G(\ExtPw^i,\ExtPu)$ contained in $\SuppPu \setminus \SuppPw^i$ (such a node exists by \tref{Remark}{rem:graph:1}). Then, by the definition of $\ExtPw^i$, we necessarily have $\{j-2,j-1,j+1,j+2\} \subset \SuppPw^i$. Since $\G(\ExtPw^i, \ExtPu)$ is a connected bipartite subgraph of $\circuit_n$ with partite sets $\SuppPu \setminus \SuppPw^i$ and $\SuppPw^i \setminus \SuppPu$, it follows that besides $j$, only $j-1$ and $j+1$ can be nodes of $\G(\ExtPw^i,\ExtPu)$. Assume that $j-1$ is not a node of $\G(\ExtPw^i,\ExtPu)$, or equivalently that $j-1 \in \SuppPu$. Since $j-2$ is not a node of $\G(\ExtPw^i,\ExtPu)$, we necessarily have $j-2 \in \SuppPu$, and so $\ExtPu$ would have three consecutive ones, contradicting \tref{Lemma}{lem:mni:extremo}. It follows that $j-1 \not \in \SuppPu$. Similarly, it can be shown that $j+1 \not \in \SuppPu$. We conclude that $\SuppPu \setminus \SuppPw^i = \{j\}$ and $\SuppPw^i \setminus \SuppPu = \{j-1,j+1\}$, which completes the proof.
\end{proof}

\begin{lem}\label{LemmaVerticesOfE}
If $E$ is the matrix $\cn$ to which we have appended the rows $a^1$, $a^2$ and $a^3$ (defined by~\eqref{equ:3nu:v}), then the vertices of $Q(E)$ are those of $\qcn$ except for $\ExtPw^1$, $\ExtPw^2$ and $\ExtPw^3$ (defined by~\eqref{equ:3nu:w}).
\end{lem}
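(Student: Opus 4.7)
The plan is to add the three inequalities $a^i\cdot x\ge 1$ to $\qcn$ one at a time, invoking~\tref{Proposition}{propo:FK96} at each stage. Set $P_0 = \qcn$ and $P_i = P_{i-1}\cap\{x\in\RR^n\mid a^i\cdot x\ge 1\}$ for $i=1,2,3$, so that $P_3 = Q(E)$, and prove by induction on $i$ that the vertices of $P_i$ are precisely the vertices of $\qcn$ other than $\ExtPw^1,\dots,\ExtPw^i$. Observe that the supports $\supp a^1, \supp a^2, \supp a^3$ are disjoint (they are the three residue classes modulo~$3$ in $\I$), so $a^j\cdot\ExtPw^i = \nu - a^j\cdot a^i = \nu \ge 3$ whenever $j\ne i$; hence $\ExtPw^i$ belongs to $P_{i-1}$ at every stage and lies in the strict interior of each previously added halfspace $\{a^j\cdot x\ge 1\}$ with $j<i$.

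For the inductive step, the inductive hypothesis together with~\trrefp{Lemma}{lem:3nu:1}{lem:3nu:1:3} shows that $\ExtPw^i$ is the unique vertex of $P_{i-1}$ violating $a^i\cdot x\ge 1$, so it is the only vertex removed when forming $P_i$, and what remains is to rule out new vertices. I apply~\tref{Proposition}{propo:FK96} to $P_{i-1}$ with the inequality $a^i\cdot x\ge 1$. Since $\ExtPw^i$ sits in the strict interior of all previous halfspaces, the tight constraints of $P_{i-1}$ at $\ExtPw^i$ coincide with those of $\qcn$; moreover, a short calculation (analogous to the one showing $a^j\cdot\ExtPw^i > 1$ for $j\ne i$) shows that each bounded edge of $\qcn$ incident to $\ExtPw^i$ has its other endpoint lying strictly in every $\{a^j\cdot x > 1\}$, and each infinite edge in direction $\ee_h$ stays inside every such halfspace, so the edges of $P_{i-1}$ emanating from $\ExtPw^i$ coincide with those of $\qcn$ emanating from $\ExtPw^i$. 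For Case~2 of~\tref{Proposition}{propo:FK96} one would need a neighbor $\ExtPw$ of $\ExtPw^i$ in $P_{i-1}$ with $a^i\cdot\ExtPw > 1$; but by~\tref{Lemma}{lem:3nu:2} these neighbors are $\ExtPu^i,\ExtPu^{i+3},\dots,\ExtPu^{i+3(\nu-1)}$, all lying on the hyperplane $a^i\cdot x = 1$. So Case~2 is empty.

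For Case~3, one would need an infinite edge of $\qcn$ at $\ExtPw^i$ in a direction $\ee_h$ with $a^i\cdot\ee_h > 0$, i.e., $h\in\supp a^i$. I would rule this out by classifying all edges at $\ExtPw^i$. The tight constraints at $\ExtPw^i$ in $\qcn$ are the $\nu$ equalities $x_h = 0$ for $h\in\supp a^i$ together with the $2\nu$ equalities $C_t\cdot x = 1$ for those $C_t\in\cov$ meeting $\SuppPw^i$ in a single element; a direct check shows these $n$ equations have $\ExtPw^i$ as their unique solution, so $\ExtPw^i$ is non-degenerate and has exactly $n$ edges, each obtained by relaxing one tight constraint. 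Relaxing $x_h = 0$ (with $h\in\supp a^i$) yields a bounded edge ending at some $\ExtPu^{i+3k}$, whereas relaxing a $C_t\cdot x = 1$ equality yields an infinite edge in direction $\ee_h$ for some $h\in\SuppPw^i = \I\setminus\supp a^i$, for which $a^i\cdot\ee_h = 0$. Thus no infinite edge at $\ExtPw^i$ satisfies $a^i\cdot\ee_h > 0$, and Case~3 is also empty. This closes the induction, and $Q(E) = P_3$ has the claimed vertex set. The main difficulty is the tight-constraint classification at $\ExtPw^i$ needed in Case~3; it rests on non-degeneracy and on identifying, for each relaxed tight constraint, the direction of the resulting edge, which amounts to careful but routine bookkeeping in the cyclic arithmetic modulo~$n$.
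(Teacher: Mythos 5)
Your proposal is correct and follows the same overall strategy as the paper: add the inequalities $a^i\cdot x\ge 1$ one at a time, use \tref{Lemma}{lem:3nu:1} to see that $\ExtPw^i$ is the only vertex cut off at stage $i$, and invoke \tref{Proposition}{propo:FK96} together with \tref{Lemma}{lem:3nu:2} to dispose of Case~2 (all neighbors of $\ExtPw^i$ lie on the hyperplane $a^i\cdot x=1$). The one genuine difference is in Case~3. You rule out the dangerous infinite edges altogether: you verify that $\ExtPw^i$ is a non-degenerate vertex of $\qcn$ (the $\nu$ tight nonnegativity constraints $x_h=0$, $h\in\supp a^i$, plus the $2\nu$ tight covering constraints are $n$ linearly independent equations), classify its $n$ edges, and observe that every infinite edge points in a direction $\ee_h$ with $h\in\SuppPw^i$, so $a^i\cdot\ee_h=0$. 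The paper instead does not analyze which infinite edges exist: it simply notes that any candidate new vertex would have to be $\ExtPw^i+\ee_h$ with $h\in\supp a^i$, and then uses the identity~\eqref{equ:3nu:u:w} to see that this point dominates $\ExtPu^{h-3}$ and therefore cannot be a vertex. The paper's domination trick is shorter and avoids the tight-constraint bookkeeping; your classification is more work (and is the part you rightly flag as the main technical burden) but yields more information, namely a complete description of the edge structure at $\ExtPw^i$. I checked your classification and it is correct; just make sure, when writing it up, to also record the observation you sketch for the induction at stages $i=2,3$ --- that the bounded edges of $\qcn$ at $\ExtPw^i$ survive intact in $P_{i-1}$ because their far endpoints $\ExtPu^{i+3k}$ satisfy $a^j\cdot\ExtPu^{i+3k}\ge 1$ by \trrefp{Lemma}{lem:3nu:1}{lem:3nu:1:3}, and the infinite edges survive because their directions $\ee_h$ satisfy $a^j\cdot\ee_h\ge 0$.
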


\begin{proof}
In the first place, we claim that $\ExtPv\cdot a^i \ge 1$ for every vertex $\ExtPv$ of $\qcn$ different from $\ExtPw^i$. To see this, assume $\ExtPv$ is a vertex of $\qcn$ such that $\ExtPv\cdot a^i < 1$. Then $\ExtPv$ is binary (for the unique fractional vertex $\fast=\frac{1}{2}\,\one$ of $\qcn$ we have $\fast\cdot a^i = \frac{\nu}{2} \ge \frac{3}{2}$) and $\ExtPv\cdot a^i = 0$, and so $\ExtPv$ is dominated by $\ExtPw^i$. Thus, $\ExtPv$ and $\ExtPw^i$ must coincide as they are both vertices of $\qcn$. This proves our claim. 

By our claim above we conclude that every vertex of $\qcn$ not in $\mc{W} = \{\ExtPw^1,\ExtPw^2,\ExtPw^3\}$ is a vertex of $Q(E)$.

Conversely, let us see that if we add one row $a^i$ at a time then no new vertices are created, and the points in $\mc{W}$ are the only vertices that are eliminated.

In the first place, let us consider the intersection of $\qcn$ with the half-space $\{x\in\RR^n \mid a^i\cdot x \ge 1 \}$. If new vertices are created, then by \tref{Proposition}{propo:FK96} they should come from the intersection of the hyperplane $\{x\in\RR^n \mid a^i\cdot x = 1\}$ with an edge of $\qcn$. This edge should be incident to a vertex $\ExtPv$ satisfying $a^i\cdot \ExtPv < 1$, and therefore, by our claim above, this edge is incident to $\ExtPw^i$. Given that $\ExtPw^i$ is adjacent only to the vertices which satisfy the condition of \trrefp{Lemma}{lem:3nu:2}{lem:3nu:2:b}, and that any of such vertices belongs to $\{x\in\RR^n \mid a^i\cdot x = 1\}$, from \tref{Proposition}{propo:FK96} we conclude that any new vertex must come from the intersection of an infinite edge of the form $\{\ExtPw^i + \gamma \ee_j \mid \gamma \geq 0\}$ with the hyperplane $\{x\in\RR^n \mid a^i\cdot x = 1\}$. Since $a^i\cdot\ExtPw^i = 0$, this intersection must be of the form $\{\ExtPw^i + \ee_j\}$ with $j \in \SuppVa^i =\I \setminus \SuppPw^i$. Let $\ExtPu^j$ be the vertex of $\qcn$ defined by \trrefp{Lemma}{lem:3nu:2}{lem:3nu:2:b}, i.e., let $\ExtPu^j \in \BB^n$ be such that $\SuppPu^j \setminus \SuppPw^i = \{j\}$ and $\SuppPw^i \setminus \SuppPu^j = \{j-1,j+1\}$. We observe that by the definition of $\ExtPu^j$ we have:
\[
\ExtPw^i + \ee_j = \ExtPu^j + \ee_{j-1} + \ee_{j+1}.
\]
It follows that $\ExtPw^i + \ee_j$ dominates $\ExtPu^j$, and then it cannot be a vertex of $\qcn \cap \{x\in\RR^n \mid a^i\cdot x \ge 1 \}$. Thus, we conclude that no new vertex is created and the vertices of $\qcn \cap \{x\in\RR^n \mid a^i\cdot x \ge 1 \}$ are the vertices of $\qcn$ except for $\ExtPw^i$.

Finally, since $a^i\cdot\ExtPw^h = \nu >1$ for $h\neq i$, observe that the addition of the inequality $a^i\cdot x \ge 1$ does not modify the adjacency relations for $\ExtPw^h$ ($h\neq i$) in the resulting polyhedron,  which are still characterized by \trrefp{Lemma}{lem:3nu:2}{lem:3nu:2:b}. Then, we can repeat the argument above each time we add a new inequality. This completes the proof. 
\end{proof}

\begin{proof}[Proof of \tref{Theorem}{thm:3nu}]
The previous lemmas show that, except for the points in $\mc{W}$, the vertices of $\qcn$ and $Q(E)$ coincide. Moreover, no vertex of $\mc{B}$ belongs to $\mc{W}$ (the former are either the fractional vertex $\fast$ or have $s = (3\nu + 1)/2$ ones, while the latter have $2\nu$ ones). Thus, \tref{Lemma}{lem:mni:2} yields that $E$ is mni. 
\end{proof}

\section*{Acknowledgements}
\label{sec:acknowledge}

\begin{itemize}

\item
The authors are very grateful to the anonymous reviewers for their comments and suggestions which helped to improve the presentation of the results in this paper.

\item
We made wide use of the freely available polyhedral computational codes \emph{PORTA} by Christof and L\"{o}bel~\cite{porta} and \emph{cdd} by Fukuda~\cite{Fu0.94}: our thanks to their authors.

\item
This work was partially supported by grant PIP 112-201101-01026 from Consejo Nacional de Investigaciones Cient\'ificas y T\'ecnicas (CONICET), Argentina. P.~B.~Tolomei was also partially supported by grant PID-ING 416 from Universidad Nacional de Rosario (UNR), Argentina.

\end{itemize}


\end{document}